\definecolor{darkgreen}{rgb}{0,0.5,0} %
\newcommand{\pro}{\mathbb{P}}
\newcommand{\zee}{\mathbb{Z}}
\newcommand{\se}{symmetry-equivalent\xspace}
\newcommand{\bif}{belongs to an unbounded family\xspace}
\newcommand{\Q}{\mathbb{Q}}
\newcommand{\C}{\mathbb{C}}
\newcommand{\R}{\mathbb{R}}
\newcommand{\roots}{U}
\newcommand*\circled[1]{\tikz[baseline=(char.base)]{
            \node[shape=circle,draw,inner sep=2pt] (char) {#1};}}
\newcommand{\abs}[1]{\left| #1 \right|}
\newcommand{\generated}[1]{\left\langle #1 \right\rangle}
\newcommand{\generatedQ}[1]{\left\langle #1 \right\rangle_\mathbb{Q}}
\theoremstyle{definition}
\newtheorem{lemma}{Lemma}
\newtheorem{proposition}[lemma]{Proposition}
\newtheorem{theorem}[lemma]{Theorem}
\newtheorem{remark}[lemma]{Remark}
\newtheorem{corollary}[lemma]{Corollary}
\newtheorem{definition}[lemma]{Definition}
\numberwithin{lemma}{section}
\newcommand{\xp}{x'}
\newcommand{\yp}{y'}
\newcommand{\zp}{z'}
\newcommand{\Gal}{\operatorname{Gal}}
\title{Classification of rational angles in plane lattices II}
\author{Roberto Dvornicich}
\address{Department of mathematics, University of Pisa, Largo Bruno Pon\-te\-cor\-vo~5, 56127 Pisa, Italy}
\email{roberto.dvornicich@unipi.it}
\author{Davide Lombardo}
\address{Department of mathematics, University of Pisa, Largo Bruno Pon\-te\-cor\-vo~5, 56127 Pisa, Italy}
\email{davide.lombardo@unipi.it}
\author{Francesco Veneziano}
\address{Department of mathematics, University of Genova, Via Dodecaneso~35, 16146 Ge\-no\-va, Italy}
\email{veneziano@dima.unige.it}
\author{Umberto Zannier}
\address{Scuola Normale Superiore, Piazza dei Cavalieri 7, 56126 Pisa, Italy}
\email{umberto.zannier@sns.it}
\subjclass[2010]{11H06, 14G05, 11D61, 51M05}
\keywords{plane lattices, trigonometric diophantine equations, rational points on curves}
\begin{document}

\begin{abstract}
This paper is a continuation of an earlier one, and completes a classification of the configurations of points in a plane lattice that determine angles that are rational multiples of $\pi$.
We give a complete and explicit description of lattices according to which of these configurations can be found among their points.
\end{abstract}

\maketitle

In the recent paper \cite{DvoVenZan} we considered the problem of classifying lattices (of rank $2$) in the plane $\R^2$ and sets of ordered triples of points of the lattice defining  angles with amplitudes which are {\it rational } multiples of $\pi$.  We will call them `rational angles'.  (We notice at once that the middle point in any such triple can be assumed to be the origin.)

\medskip

This kind of problem appeared to us rather spontaneously, although it is treated only in a scattered form in the existing literature, and is actually almost absent from it (we found this fact somewhat surprising).  But at least the problem is a natural generalisation of some old investigations, some of which are recalled and summarised in \cite{DvoVenZan}; for example, those on regular polygons with vertices on a plane lattice. At the end of the introduction we shall give a further motivation: the description of closed polygons with vertices on a given lattice and all angles rational.

\medskip

In fact, we are prompted to point out at once that the problem itself has quite a number of different aspects: for instance, we can vary both the lattice and the points; and we can ask both for all possible configurations of points that determine rational angles in a given lattice, and  also for an exact description of the lattices for which a given configuration is possible (and is `maximal' for that lattice). Furthermore, when both the lattice and the configuration are given, we can ask for a description of all the systems of points in that lattice that give copies of the given configuration.

All of this is explained in detail in the first paper \cite{DvoVenZan}, where we give a sort of `list' of different questions that can be asked, and where we outline a programme for obtaining complete answers to them. In that paper we also carried out many of the steps in this programme; let us now recall the strategy and the main issues that arise.

\subsection{Trigonometric diophantine equations} It turns out that any kind of `complete' classification, whatever that may mean, leads in the first place to certain (rather laborious) arithmetical questions involving {\it algebraic equations to be solved in unknown roots of unity}. 

This is a rather well-known topic in itself, closely related to (and in a sense originating from) the so-called {\it Manin-Mumford conjecture} and certain analogous problems formulated by Lang, which led to extensive research in several directions (see for example the book by the fourth author \cite{Z.UI} and the survey paper \cite{Z.T}).
But in fact equations to be solved in roots of unity had also appeared independently, already in Gordan's 1877 investigations on finite subgroups of $\mathrm{PGL}_2(\C)$. They were then studied systematically by other authors, in particular by Mann \cite{Mann} and a decade later by Conway and Jones \cite{ConwayJones}. Further results and methods were then developed by other authors (we cite, for example, the paper \cite{DZred} by the first and last author of the present work).

In particular, the paper \cite{ConwayJones} by Conway and Jones improved on some of Mann's (crucial) estimates for the maximum order of the roots of unity appearing in a suitably normalised equation. More generally, their analysis also covered what they called {\it trigonometric diophantine equations}, meaning, roughly speaking, algebraic equations (or systems of algebraic equations) in variables $x_1,\ldots ,x_m, y_1,\ldots ,y_n$, to be solved with rational (or integral) $x_i$ and roots of unity $y_j$. The terminology is, of course, motivated by the fact that these latter constraints amount to the {\it rationality} of the {\it angles} $\theta_j\in\R/\zee$ when we write $y_j=\exp(2\pi \sqrt{-1}\theta_j)$.

In fact, the work of Conway and Jones provided a formal theory and a kind of algorithm for reducing any trigonometric diophantine equation (or system) to an ordinary diophantine system, namely by getting rid of the variables $y_i$ in roots of unity. In particular, this procedure allows one to `solve' any given algebraic equation (e.g., defined over $\Q$) purely in roots of unity, but is actually more general than this.\footnote{The possibility of such a reduction was already implicit in Mann's inequalities, but the paper by Conway and Jones emphasises this aspect and provides a formal treatment of it.}

\subsection{About our previous paper and the current one} 
In fact, the problem we are considering here turned out to be a {\it mixed problem} of exactly this sort, namely falling within the scope of Conway and Jones's theory. After reducing the problem to a trigonometric diophantine equation, the two main steps of our previous work on this subject were essentially as follows:

\begin{enumerate}
\item to bound the orders of the roots of unity that can appear corresponding to `nontrivial` configurations, and 
\item to classify the possible configurations according to these orders. 
\end{enumerate}

We fully described the configurations where the orders are unbounded, and showed that they can be suitably parametrised.
Let us comment on these steps.

\medskip

{\it First goal}. The bounds mentioned above were fully explicit. Nevertheless, the number of cases to be analysed individually was so large that part of the resulting classification, though possible in principle, was not carried out in full detail. Our first objective in the present paper is to address this aspect: we refine some of the previous estimates and complete the computational part of the proof.

\medskip

{\it Second goal}. More interestingly, already in the earlier paper it turned out that a few such configurations, corresponding to certain specific orders of the roots of unity involved, could be parametrised by rational points on certain curves of genus up to $5$. This fact truly gives our basic problem a doubly diophantine aspect.

It is well known that the rational points on (smooth) curves can be parametrised by $\pro_1(\Q)$ (genus $0$), or form a group of finite rank (genus $1$), or form a finite set (genus $>1$). In principle, however, only the first of these cases can be solved effectively with today's knowledge. Hence, even though after the results of the paper \cite{DvoVenZan} the curves in question were known (and thus the classification was in a sense complete), it was not entirely clear whether all solutions could be written down `explicitly'.

Our second purpose in this paper is to carry out this step and complete the classification in a fully explicit way. In fact, the particular structure of the curves in question allows us to (a) compute the Mordell-Weil groups of the genus-1 curves and (b) compute the finitely many remaining finite sets of rational points. This is done by a combination of methods, the last step using the Chabauty-Skolem methods, see \S 8. We remark that the chain of reductions which allowed us to reduce the problem from curves of genus up to 5 to curves of genus 2 for which known algorithmic methods apply was not guaranteed, \textit{a priori}, to exist.

\subsection{Final comments} Our discussion of the situation fits well within the scope of the theory of Conway and Jones mentioned above.

As far as we know, this is the first `practical' example where some naturally given trigonometric diophantine equations lead to {\it non-trivial} ordinary diophantine equations. 
Indeed, to the best of our knowledge, in all the previous applications of the theory of Mann and/or Conway and Jones, the main difficulties lay, so to speak, in the `trigonometric part' of the problem, while the purely diophantine questions turned out to be of a milder nature, usually yielding an overdetermined system even in complex numbers.\footnote{In fact, this also happens in some of the cases analysed here, but other cases lead to the high-genus curves already mentioned.}
See for instance the paper \cite{Poonenetal} for a remarkable recent example of a geometric problem (already raised by Conway and Jones) leading to equations in roots of unity. In \cite{Poonenetal} ordinary diophantine equations do not appear to a large extent, while the computational complexity in dealing with the trigonometric part is more substantial. The authors use several tools, also of a theoretical nature, to reduce this complexity, and many more subtle ideas and devices are needed to achieve the desired goal.

\subsection{A further curious motivation} As we hinted above, we recently realised that the subject of these two papers of ours is closely related to the subject of {\it polygons with rational angles} (which are relevant in other areas, such as the theory of billiards). 

We formally state this link, which is of a very elementary nature, in the following proposition, where by \emph{rational $n$-tuple} we mean a set of $n$ elements of a lattice of rank $2$ in $\R^2$ (which we identify with $\C$) such that any two of them form a rational angle with the origin.
In fact, as in \cite{DvoVenZan}, it is easy to see that one can equivalently work with the vector space $V$ generated by the lattice over $\Q$. We call such a vector space $V$ simply a \textit{space}. We have:

\begin{proposition}
 A space $V$ as above contains a rational $n$-tuple if and only if it contains an $n$-gon with rational angles and no pair of parallel sides.
\end{proposition}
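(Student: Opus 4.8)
The plan is to prove the two implications separately; the common principle is that for a closed polygon the \emph{side vectors} record exactly the rationality of the angles, so that the side vectors of an $n$-gon with no two parallel sides form a rational $n$-tuple, while conversely a rational $n$-tuple can be rescaled over $\Q$ so as to sum to zero and thereby become the side sequence of such an $n$-gon. I use throughout that $V$ is a $2$-dimensional $\Q$-vector space inside $\C$, that two nonzero vectors $u,v$ make a rational angle precisely when $u/v=r\omega$ for some $r\in\R_{>0}$ and some root of unity $\omega\neq\pm1$, and that the set of all products $r\omega$ with $r\in\R_{>0}$ and $\omega$ a root of unity is closed under multiplication and contains $\Q^{\times}$.

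Suppose first that $V$ contains an $n$-gon with vertices $w_1,\dots,w_n\in V$, rational angles, and no two parallel sides; put $s_i=w_{i+1}-w_i\in V$ (indices mod $n$). Each $s_i$ is nonzero, and the $s_i$ are pairwise non-parallel, hence distinct, so $\{s_1,\dots,s_n\}$ is a set of $n$ elements of $V$. The angle of the polygon at $w_k$ is a rational multiple of $\pi$ if and only if $\arg(s_k/s_{k-1})$ is, that is, if and only if the ratio $s_k/s_{k-1}$ of consecutive sides is a positive real times a root of unity. Forming products of such consecutive ratios around the boundary, $s_j/s_i$ is a positive real times a root of unity for all $i,j$; since moreover $s_i\nparallel s_j$, the root of unity here is $\neq\pm1$, so $s_i$ and $s_j$ make a rational angle. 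Thus $\{s_1,\dots,s_n\}$ is a rational $n$-tuple in $V$.

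Conversely, suppose $\{z_1,\dots,z_n\}\subset V$ is a rational $n$-tuple, with $n\geq3$ (an $n$-gon has at least three sides). Write $z_i/z_1=t_i\omega_i$ with $t_i\in\R_{>0}$ and $\omega_i$ a root of unity. I look for $d_1,\dots,d_n\in\Q^{\times}$ with $\sum_i d_i z_i=0$ and set $s_i=d_i z_i$. Then $s_i\in V$; the $s_i$ are pairwise non-parallel (as $d_i,d_j\neq0$ and the $z_i$ are pairwise non-parallel); and $s_i/s_j=(d_i/d_j)(t_i/t_j)\,\omega_i\omega_j^{-1}$ is a positive real times a root of unity, this root of unity being $\neq\pm1$ because $s_i\nparallel s_j$, so any two of the $s_i$ make a rational angle. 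In particular consecutive sides do, which is exactly the rationality of the polygon's angles. It remains to choose the $d_i$ so that, in addition, the partial sums $w_0:=0$ and $w_k:=s_1+\dots+s_k$ ($1\leq k\leq n-1$) are pairwise distinct, equivalently so that no proper consecutive block of the $s_i$ sums to zero. Since any two of the $z_i$ form a $\Q$-basis of $V$, the relation space $W=\{(d_i)\in\Q^n : \sum_i d_i z_i=0\}$ has dimension $n-2\geq1$; an elementary dimension count (using $\dim_\Q V=2$ and that any two of the $z_i$ span $V$) shows that, for every index $k$ and every proper nonempty $I\subset\{1,\dots,n\}$, the subspaces $W\cap\{d_k=0\}$ and $W_I:=\{(d_i)\in W : \sum_{i\in I}d_i z_i=0\}$ are \emph{proper} in $W$. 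As $\Q$ is infinite, $W$ is not the union of these finitely many proper subspaces, so there is $(d_i)\in W$ with all $d_i\neq0$ and with $\sum_{i\in I}d_i z_i\neq0$ for every proper nonempty consecutive block $I$. For this choice the $w_k$ are distinct (an equality $w_j=w_k$ with $j<k$ would put $(d_i)$ into $W_{\{j+1,\dots,k\}}$), so $w_0,\dots,w_{n-1}$ are the $n$ distinct vertices of the required $n$-gon, with $w_n=w_0$.

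The telescoping identities for the ratios $s_j/s_i$ and the dimension bounds for the $W_I$ are routine. The one genuinely delicate point I anticipate is the last step of the converse: guaranteeing that the rescaled side vectors close up into a bona fide $n$-gon with $n$ distinct vertices, rather than into a shorter closed path traversed with repetitions. Avoiding the subspaces $W_I$ (together with the coordinate hyperplanes) is precisely what secures this.
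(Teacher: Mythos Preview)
Your forward direction coincides with the paper's. For the converse you take a different route. The paper first replaces each $v_i$ by $\pm v_i$ so that all lie in a common half-plane, orders them by argument, and then places the vertices explicitly: $p_1=0$, $p_{i+1}=p_i+v_i$ for $i\le n-2$, and finally writes $p_{n-1}=av_0+bv_{n-1}$ with $a,b\in\Q_{>0}$ to locate $p_n=bv_{n-1}$. Monotonicity of the $(v_0,v_{n-1})$-coordinates then gives a convex, hence simple, polygon. You instead work inside the $(n-2)$-dimensional relation space $W=\{(d_i):\sum d_iz_i=0\}\subset\Q^n$ and pick $(d_i)$ off a finite union of proper subspaces. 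Your dimension counts are correct: since any two $z_i$ span $V$, each $W_I$ has dimension $\max(|I|-2,0)+\max(n-|I|-2,0)\le n-3$, and the coordinate hyperplanes meet $W$ properly for the same reason. This is a clean, coordinate-free alternative to the paper's convexity argument.

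One point the paper handles and you do not is simplicity. Your construction yields a closed $n$-sided path with distinct vertices, no parallel sides, and rational turning angles, but nothing forbids non-adjacent edges from crossing: avoiding the subspaces $W_I$ only rules out vertex coincidences, not transversal edge intersections, which are not linear conditions on the $d_i$. Since the paper explicitly verifies that its polygon ``does not cross itself,'' they evidently intend a simple $n$-gon. The repair is immediate within your framework: once you have $d_i\in\Q^\times$ with $\sum d_iz_i=0$, relabel the sides $s_i=d_iz_i$ in order of increasing argument in $[0,2\pi)$; the resulting polygon then turns through a positive angle at every vertex with total turn $2\pi$, hence is convex and in particular simple (and this even lets you drop the $W_I$ conditions, as convexity already forces distinct vertices).
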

\begin{proof}
 If $p_1,\dotsc,p_n\in V$ are the vertices of such an $n$-gon, then setting $v_i=p_{i+1}-p_i$ for $i=1,\dotsc, n$ (interpreting $p_{n+1}$ as $p_1$) gives a rational $n$-tuple. Indeed, each $v_i$ lies in $V$, the angle between $v_{i-1}$ and $v_i$ is equal to $p_{i-1}\widehat{p_i}p_{i+1}$, and the fact that there is no pair of parallel sides implies that each pair of $v_i$ is $\R$-linearly independent.
 
 Conversely, assume that $v_0,\dotsc,v_{n-1}$ is a rational $n$-tuple. Without loss of generality we can assume that the $v_i$ are contained in the same half-plane and are ordered by increasing argument.
 Set now $p_1=0$ and $p_{i+1}=p_i+v_{i}$ for $i=1,\dotsc, n-2$, so that $p_{n-1}=v_1+v_2+\dotsb +v_{n-2}$. The point $p_{n-1}\in V$ lies in the angle generated by $v_0,v_n$, so that $p_{n-1}=av_0+bv_n$ for some positive rational numbers $a,b$. Set $p_n=bv_n$. Then $p_1p_2\dotsm p_n$ is a rational $n$-gon. The angle between two consecutive sides is the same as that between two of the $v_i$, and one can see that the polygon does not cross itself by checking that the coordinates of the sequence $p_i$ $i=1,\dotsc, n-1$ with respect to the basis $v_0,v_n$ form two increasing sequences. 
\end{proof}

\subsection{Proofs and computations}
Once the theoretical method has been established, the proofs of the main results are carried out with the help of a computer. We describe our approach in Section~\ref{sec: notation and conventions}, after recalling some necessary preliminaries from \cite{DvoVenZan} and setting up our notation, and give details in the remaining sections. 

We immediately point out that we develop a rather general strategy to handle trigonometric-diophantine equations which greatly reduces the combinatorial difficulty of analysing all possible `vanishing subsums' of a given equation. In particular, Proposition~\ref{prop:Subsums} allows us to check certain properties not for every decomposition of the given equation into vanishing subsums, but rather for every subset of its monomials of a fixed size. For a sense of how the proposition can be applied, and of the enormous improvement that results from this approach, we refer the reader to Proposition~\ref{prop:LongSubsum3plus2}: the computation that proves this result involves checking the 2,002 subsets of cardinality 9 of a set of 14 elements. If we had to consider all possible decompositions into vanishing subsums, we would instead have to loop over all possible 190,899,322 partitions of a set of $14$ elements. Even more strikingly, Proposition~\ref{prop:LongSubsum222} is proved by considering 8,436,285 subsets, rather than the approximately $5.5 \cdot 10^{20}$ partitions of a set of 27 elements. In future work, we plan to apply this strategy to other trigonometric-diophantine equations: for example, the main conjecture of \cite{wang2023rational} should be amenable to our methods.

The main computations were carried out in MAGMA V2.28-4 \cite{MAGMA}; the corresponding source code and output files are available at 

\begin{center}
\url{https://github.com/DavideLombardoMath/rational-angles-lattices}.
\end{center}
All the computer calculations required for this paper took less than 1.5 hours in total on a 10-core  Intel Core i9-10900KF CPU @ 3.70GHz with 32GiB of RAM.

\section*{Acknowledgements}
We thank Alessandra Caraceni for her help with the euclidean proof presented in the appendix.
We thank Evan O'Dorney for his comments and his thorough Math Review of the first part of this work.
The second and third authors are members of the GNSAGA group of the Istituto Nazionale di Alta Matematica.

\section{Notation and conventions}\label{sec: notation and conventions}
 We recall here the main definitions and results from \cite{DvoVenZan}.

For each $n\geq1$ we write $\zeta_n$ for a primitive $n$-th root of unity, and we denote by $\roots$ the set of all complex roots of unity. 

We call \emph{space} the $\mathbb{Q}$-span of two $\mathbb{R}$-linearly independent elements of $\mathbb{C}$. 

We will say that two spaces $V_1,V_2$ are \textit{homothetic}
if one is sent to the other by a homothety of the form $z\mapsto \lambda z$ for some fixed $\lambda\in\C^*$. We will say that they are \textit{equivalent} %
if $V_1$ is homothetic to $V_2$ or its complex conjugate $\overline{V_2}$. The configurations of rational angles appearing in a lattice $\Lambda$ depend only on the corresponding space $V=\Lambda \otimes \mathbb{Q}$, and in fact only on its homothety class. For this reason, we can -- whenever desired -- assume that the lattice (or the corresponding space) is of the form $\langle 1, \tau \rangle$ for a suitable $\tau \in \mathbb{C}$.

If $V$ is a space, we call \emph{rational angle} in $V$ a pair $(v_1,v_2)$ of $\mathbb{R}$-linearly independent elements of $V$ such that the argument of $v_1/v_2$ is a rational multiple of $\pi$. Swapping $v_1$ and $v_2$ or multiplying them by non-zero rational numbers defines the same angle. More generally, we call \emph{rational $n$-tuple} an $n$-tuple of elements of $V$ such that any two of them form a rational angle.

We say that a space is of type \circled{$n$} if it contains a rational $n$-tuple, and we extend this notation additively by saying that it is of type \circled{$n$}+\circled{$m$} if it contains a rational $n$-tuple and a disjoint rational $m$-tuple, and so on. There is an obvious partial order on the possible types of a given space, and we say that $V$ is of a certain \textit{exact} type if that type is maximal for $V$.

In \cite{DvoVenZan} it was proved that the existence of two independent rational angles leads to algebraic equations for the generator $\tau$ of the normalised lattice $\langle 1, \tau \rangle$. The shape of these equations depends on whether the two angles are adjacent. Having three independent rational angles then leads to two equations for the generator of the lattice, from which one can recover a diophantine-trigonometric equation whose exact form depends on the relative position of the three angles. This means that spaces of types $\circled{4},\circled{3}+\circled{2},\text{ and }3\circled{2}$ arise from different equations which have been worked out in \cite{DvoVenZan}*{§3} and should be studied separately, although using the same techniques.

For a rational angle $(v_1,v_2)$ the root of unity involved in the diophantine-trigonometric equation has an argument which is double to the geometric amplitude of the angle between $v_1$ and $v_2$; that is, if $\mu$ is a root of unity such that $\mu v_1/v_2 \in \R$, then the equation only involves $\mu^2$. We will sometimes refer to the root of unity $\mu^2$ as the \emph{squared amplitude} of the angle.

Spaces of type \circled{4} have already been fully classified in \cite{DvoVenZan}, spaces of type \circled{3}+\circled{2} will be classified in Section~\ref{sect:32}, and type $3\circled{2}$, the most general, will take up most of the effort of this paper.
The corresponding algebraic equation is represented by a polynomial $P\in\zee[a,b,c,d,x,y,z]$ which is of degree 2 in each of the variables $x,y,z$ and homogeneous of degree 4 in the variables $a,b,c,d$. The geometric interpretation of the solutions to the equation $P=0$ is described in \S \ref{subsec:corresp}.

The 27 terms of the polynomial $P$ are listed in the following table, where each term is described as a monomial in $x,y,z$ multiplied by a coefficient in $a,b,c,d$.
\begin{equation}
\arraycolsep=1.5pt\def\arraystretch{2}
\label{table:BigPoly}
\begin{array}{|c|l|}
\hline
\text{Monomials}        &       \text{Coefficient}                                     \\
\hline
 x^2y^2z^2,1        &       -b (a - c) (b - d) d                                    \\
 x^2y^2z,z          &       b (a b c + a b d - 2 a c d - 2 b c d + c^2 d + c d^2)   \\
 x^2yz^2,y          &       d (a^2 b + a b^2 - 2 a b c - 2 a b d + a c d + b c d)   \\
 xy^2z^2,x          &       (a - c) (b - d) (a b + c d)                             \\
 x^2y^2,z^2         &       -b (b - c) c (a - d)                                    \\
 x^2yz,yz           &         -a^2 b c - a b^2 c - a^2 b d - a b^2 d + 8 a b c d - a c^2 d - 
 b c^2 d - a c d^2 - b c d^2\\
 x^2z^2,y^2         &       -a (b - c) (a - d) d                                    \\
 xy^2z,xz           &       \begin{aligned}-2 a^2 b^2 + a^2 b c + a b^2 c - 2 a b c^2 + a^2 b d + a b^2 d +\\+ 
 a c^2 d + b c^2 d - 2 a b d^2 + a c d^2 + b c d^2 - 2 c^2 d^2\end{aligned}\\
 xyz^2,xy           &       \begin{aligned}-2 a^2 b^2 + a^2 b c + a b^2 c + a^2 b d + a b^2 d - 2 a^2 c d +\\- 
 2 b^2 c d + a c^2 d + b c^2 d + a c d^2 + b c d^2 - 2 c^2 d^2\end{aligned}\\
 y^2z^2,x^2         &       -a (a - c) c (b - d)                                    \\
 x^2y,yz^2          &       c (a^2 b + a b^2 - 2 a b c - 2 a b d + a c d + b c d)   \\
 x^2z,y^2z          &       a (a b c + a b d - 2 a c d - 2 b c d + c^2 d + c d^2)   \\
 xy^2,xz^2          &       (b - c) (a - d) (a b + c d)                             \\
 xyz                &       \begin{aligned}2 (2 a^2 b^2 - a^2 b c - a b^2 c + 2 a b c^2 - a^2 b d - a b^2 d + 
   2 a^2 c d - 4 a b c d +\\+ 2 b^2 c d - a c^2 d - b c^2 d + 2 a b d^2 - a c d^2 - b c d^2 + 2 c^2 d^2)\end{aligned}\\
 \hline
\end{array}
\end{equation}

Classifying spaces of type 3\circled{2} is essentially equivalent to the determination of all solutions to the equation
\begin{equation}\label{eqn:main}
 P(a,b,c,d,x,y,z)=0
\end{equation}
with $x,y,z\in\roots$ and $(a:b:c:d)\in\pro_3(\Q)$ satisfying the non-degeneracy conditions
\begin{equation}\label{eq:Conditions}
 x,y,z\neq 1,\qquad a,b,c,d \text{ distinct and non-zero.}
\end{equation}
In the rest of the paper, whenever we speak of solutions to \eqref{eqn:main}, we implicitly mean that conditions \eqref{eq:Conditions} are also satisfied.

\subsection{Correspondence between algebraic solutions and geometric configurations}\label{subsec:corresp}
Given a solution to \eqref{eqn:main} such that $ab\neq cd$, we can set
\begin{equation}\label{eq:tauabcd}
 \tau=\frac{(cd-ab)x(y-1)(z-1)}{a(y-x)(z-1)+b(xy-1)(z-1)-c(z-x)(y-1)-d(xz-1)(y-1)};
\end{equation}
then $V=\generatedQ{1,\tau}$ gives a space with three rational angles $(1,\tau),(\tau+a,\tau+b),(\tau+c,\tau+d)$.

Solutions to $\eqref{eqn:main}$ such that $ab=cd$ do not necessarily determine a space; this case occurs when the two quadratic equations for $\tau$ \cite{DvoVenZan}*{3.11} arising from the two rational angles $(\tau+a,\tau+b),(\tau+c,\tau+d)$ are proportional. This case will be treated separately in Section~\ref{sect:abcd}.

\subsection{Symmetries of the equation}
If $(a,b,c,d,x,y,z)$ is a solution of \eqref{eqn:main} so are
\begin{align}\label{simmetrie}
\begin{split}
 &\left(b,a,c,d,x,y^{-1},z\right)\\
 &\left(a,b,d,c,x,y,z^{-1}\right)\\
 &\left(\frac{1}{a},\frac{1}{b},\frac{1}{c},\frac{1}{d},x^{-1},y,z\right)\\
 &\left(c,d,a,b,x,z,y\right)\\
 &\left(b(b-c)(b-d),a(b-c)(b-d),b(a-c)(d-b),b(a-d)(c-b),y,x,z\right).
 \end{split}
\end{align}
The condition $ab\neq cd$ is preserved by these substitutions. Moreover, if $ab \neq cd$ holds, the values of $\tau$ given by~\eqref{eq:tauabcd} before and after the substitution produce two homothetic spaces.

Let $G$ be the group of signed $3 \times 3$ permutation matrices. The formulas \eqref{simmetrie} clearly induce an action of $G$ on the set of solutions of \eqref{eqn:main} by considering the action on $x, y, z$.

We will call the least common multiple of the multiplicative orders of $x,y,z$ their \textit{common order}. Notice in particular that $x,y,z$ all belong to the cyclotomic field $\mathbb{Q}(\zeta_n)$, where $n$ is their common order.

Let now $(a,b,c,d,x,y,z)$ be a solution to Equation \eqref{eqn:main} for which $x,y,z$ have common order $n$. For any $\sigma \in \operatorname{Gal}\left( \mathbb{Q}(\zeta_n) / \mathbb{Q} \right)$, the 7-uple $(a,b,c,d,\sigma(x),\sigma(y),\sigma(z))$ is again a solution to Equation \eqref{eqn:main}. More generally, if $\sigma$ is an automorphism in $\operatorname{Gal}(\overline{\mathbb{Q}}/\mathbb{Q})$, then $(a,b,c,d,\sigma(x),\sigma(y),\sigma(z))$ is a solution to \eqref{eqn:main} whenever $(a,b,c,d,x,y,z)$ is.
This action of the Galois group on the set of solutions is non-trivial on the spaces $\generatedQ{1,\tau}$, even up to homothety. 

Finally, the actions of $G$ and of $\operatorname{Gal}\left( \overline{\mathbb{Q}} / \mathbb{Q} \right)$ on the set of solutions of \eqref{eqn:main} commute, hence we get an action of $G \times \operatorname{Gal}\left( \overline{\mathbb{Q}} / \mathbb{Q} \right)$. We will mostly study solutions of Equation \eqref{eqn:main} up to the action of this group, as made precise in the following definition.

\begin{definition}\label{def:se}
We say that two solutions $(a,b,c,d,x,y,z)$ and $(a',b',c',d',x',y',z')$ of Equation \eqref{eqn:main} are \textit{\se} if they lie in the same orbit under the action of $G \times \operatorname{Gal}\left( \overline{\mathbb{Q}} / \mathbb{Q} \right)$.
\end{definition}

\subsection{Outline of the steps leading to the full classification}
We summarise here how the classification will be broken down:
\begin{itemize}
\item Part 1: Solve Equation \eqref{eqn:main} in the cases when one of $a, b, c, d$ is zero or when two of them coincide.
These solutions correspond to spaces of type \circled{3}+\circled{2} (or of type \circled{4}, which have already been classified in \cite{DvoVenZan}); this is done is Section~\ref{sect:32}.

\item Part 2: Find all solutions with $ab=cd$. These correspond to spaces of type  \circled{2}+\circled{2}+\circled{2} in which the two quadratic equations \cite{DvoVenZan}*{3.11} coming from the angles $(\tau+a, \tau+b)$ and $(\tau+c,\tau+d)$ are proportional.
These spaces are easier to study because, under this assumption, the polynomial $P$ factors. After discarding trivial factors and using the symmetries mentioned above, the equation reduces to a sum of four cosines of rational angles, which was already studied by Conway and Jones. It is important to note, however, that in this case a solution to the equation does not necessarily lead to a valid geometric configuration. We will work out precisely when this happens; this is done in Section~\ref{sect:abcd}.

Taking the results of \cite{DvoVenZan} and of Parts 1 and 2 into account, we can now assume that $a,b,c,d$ are all distinct, different from zero, and satisfy $ab\neq cd$.

\item Part 3: Find all solutions to \eqref{eqn:main}. This is worked out in Section~\ref{sect:222}.
\end{itemize}

Although Parts 1 and 2 above are much easier than Part 3, all three of them involve solving a diophantine-trigonometric equation of the same nature. This is achieved by applying the same strategy, which we break down as follows:
\begin{itemize}
\item Step 1: 
Improve the bound given in \cite{DvoVenZan}*{Theorem 1.1} on the common order of $x, y, z$. This is achieved by systematically studying which subsets of the monomials in the equation $P(a, b, c, d, x, y, z)=0$ can add to zero (in the language of \cite{ConwayJones}, these are called \textit{vanishing subsums}). Our approach is fairly general and is described in Section~\ref{sect:Subsums}, see in particular Proposition~\ref{prop:Subsums}. This already leads to substantial improvements over the bound found in \cite{DvoVenZan}*{§~5}.

\item Step 2: Further limit which prime powers appear in the common order of $x,y,z$. We achieve this by considering a certain subgroup of the full Galois group of $\Q(x,y,z)/\Q$ and applying the results of Parts 1 and 2 to bypass several subcases. The details are given in Section~\ref{sect:GaloisConjugates}. This is a key step in making the computation feasible and leads to a substantial improvement over \cite{DvoVenZan}.  

\item Step 3: Once the bound on the common order is small enough, one can exhaustively check all possibilities for $x,y,z$, leading to finitely many algebraic varieties (most of them empty). For each of them, the set of rational points has to be computed; of course, this is not guaranteed to be possible \textit{a priori}, but by a piece of good luck this step can be done for all the varieties involved in our computations (for example, we find curves of genus 2, 3, 5 whose rational points can be determined either because they admit maps to elliptic curves of rank 0 or because they are amenable to the Chabauty-Coleman approach).

\end{itemize}

Putting together the results of \cite{DvoVenZan} and of this paper, the following table summarises all the non-trivial  exact types that can occur and where they are described.

\begin{center}
\begin{tabular}{|c|p{6.5cm}|c|}
\hline
Exact type                  & Spaces & Notes                  \\
\hline
$\infty\circled{6}$         & $\Q(\sqrt{-3})$.       &  CM and Superrectangular \cite{DvoVenZan}*{§ 4.2}                  \\
\hline
$\infty\circled{4}$                  & $\Q(\sqrt{-1})$.                      & CM and Superrectangular \cite{DvoVenZan}*{§ 4.2} \\ 
\hline
$\infty\circled{2}$                  & Imaginary quadratic fields different from $\Q(\sqrt{-1})$ and $\Q(\sqrt{-3})$                      & CM and Rectangular \cite{DvoVenZan}*{§ 4.2}   \\
\hline
$\circled{4}$                  & All non-CM Superrectangular spaces and the two dodecagonal spaces             &   \cite{DvoVenZan}*{§ 7}                \\
\hline
$2\circled{3}$                  & Four spaces                    &   Theorem~\ref{thm:3plus2}      \\
\hline
$6\circled{2}$                  & Four elliptic families                      & Section~\ref{sect:222}                  
\\
\hline
$5\circled{2}$                  & A rational family                      & Section~\ref{sect:abcd}                  
\\
\hline
$4\circled{2}$                  & A rational family and four elliptic families                      &              Section~\ref{sect:222}     \\ \hline
$3\circled{2}$                  & Three spaces                    &       Section~\ref{sect:abcd}        
\\
\hline
\end{tabular}
\end{center}

Of course, there are also spaces of exact types 0,\circled{2}, 2\circled{2} and \circled{3}, as we now show.

We recall from \cite{DvoVenZan}*{§ 3} that $(a_0\tau+a_1,b_0\tau+b_1)$ is a rational angle in $\generated{1,\tau}$ with $\frac{b_0\tau+b_1}{a_0\tau+a_1}\mu\in\R$ for a root of unity $\mu\neq \pm 1$ precisely when 
\begin{equation}\label{eqn:1angle:prima}
  a_0 b_0 \tau\overline{\tau}+a_0 b_1 \frac{\mu^2\tau-\overline{\tau}}{\mu^2-1} + a_1 b_0 \frac{\mu^2\overline{\tau}-\tau}{\mu^2-1} + a_1 b_1 =0.
\end{equation}
For fixed $a_0,b_0,a_1,b_1,\mu$ the set of $\tau$ which satisfy this equation is a conic in the plane (it is easily checked that if $(a_0,a_1)$ and $(b_0,b_1)$ are not proportional the equation is not identically satisfied in $\tau$).

As $\C$ is not the union of countably many conics, there are infinitely many $\tau$'s such that $\generated{1,\tau}$ does not have any rational angles at all. In fact, the set of $\tau$ which generate a space with at least one rational angle has Lebesgue measure zero.

Similarily, if we fix $\theta\in\roots$ and set $\tau=r\theta$, we see that for angles different from $(1,\tau)$ Equation~\eqref{eqn:1angle:prima} has at most 2 solutions in $r$. Therefore, for any given amplitude, there are uncountably many spaces with exactly one rational angle of that amplitude and no other rational angle.

To show that there exist spaces of exact type 2\circled{2}, we begin by fixing 
a root of unity $\theta$ with large order and consider the lattices generated by complex numbers $\tau$ of the form $\tau=r\theta$ with $r\in\R$.
According to \cite{DvoVenZan}*{§ 3.3}, the angle $(\tau+a,\tau+b)$ is a rational angle with squared amplitude $y$ if and only if
 \begin{align}\label{eqn:SecondoAngolo}
  \tau^2 (y-1)+\tau\left[a(y-x)+b(x y -1)\right]+a b x (y-1)&=0;
\end{align}
it is also useful to consider the corresponding equation for $r$
 \begin{align}\label{eqn:SecondoAngolo.r}
  r^2 +r\frac{a(y-x)+b(x y -1)}{\theta (y-1)}+a b &=0.
\end{align}

We can select $y\in \roots$ and $a,b\in\Q$ so that the degree of $y$ over $\Q(x)$ is large and the left-hand side of \eqref{eqn:SecondoAngolo} is irreducible (as a polynomial in $r$) over $\Q(x,y)$. This implies that the degree of $\tau$ over $\Q(x)$ is large, and therefore, again by \eqref{eqn:SecondoAngolo}, the space $\generated{1,\tau}$ cannot contain a right angle.
It was proved in \cite{DvoVenZan} that any space with three independent angles with common order large enough must contain a right angle, and this shows that $\generated{1,\tau}$ cannot contain a third rational angle.
The same argument applies, using \cite{DvoVenZan}*{§ 3.2}, for spaces of exact type \circled{3}.

\section{General strategy for subsums}\label{sect:Subsums}
Building upon the strategy in \cite{DvoVenZan} we explain here how to bound the common order of $x,y,z$ occurring in a solution of \eqref{eqn:main}.
Consider an equation of the form
\begin{equation}\label{eq:SubsumsStrategy}
f(x,y,z) := \sum_{\underline{i}=(i_1,i_2,i_3) \in \{0,1,2\}^3} p_{\underline{i}} x^{i_1} y^{i_2} z^{i_3} = 0
\end{equation}
where each $p_{\underline{i}}$ is a rational number (in our applications, $p_{\underline{i}}$ will depend polynomially on the rational variables $a,b,c,d$, but this is not important for the present discussion).
Let $(x,y,z) \in U^3$ be a solution of Equation \eqref{eq:SubsumsStrategy}. We fix a decomposition of $\{0,1,2\}^3$ as a disjoint union $\coprod_{j=1}^r I_j$ with the following properties:
\begin{enumerate}
\item for each $j=1,\ldots,r$ we have $\sum_{\underline{i} = (i_1,i_2,i_3) \in I_j} p_{\underline{i}} x^{i_1} y^{i_2} z^{i_3} = 0$;
\item if $I$ is a proper nonempty subset of $I_j$ some $j=1,\ldots,r$ we have $\sum_{\underline{i} = (i_1,i_2,i_3) \in I} p_{\underline{i}} x^{i_1} y^{i_2} z^{i_3} \neq 0$.
\end{enumerate}
In other words, we fixed a maximally refined decomposition of \eqref{eq:SubsumsStrategy} into vanishing subsums. 

\begin{remark}
This decomposition is not necessarily unique, as shown for example by $1+1+(-1)+(-1)$, where we can group together the first and third summands (resp.~second and fourth), or the first and the fourth (resp.~second and third).
\end{remark}

For each $j=1,\ldots,r$ we also fix an arbitrary element $\underline{i}_j$ of $I_j$ and define
\begin{equation}\label{eq:submoduleMj}
M_j := \langle \underline{i}-\underline{i}_j \bigm\vert \underline{i} \in I_j \rangle_{\mathbb{Z}} \subseteq \mathbb{Z}^3,
\end{equation}
where we interpret elements of $\{0,1,2\}^3$ as elements of the free $\mathbb{Z}$-module $\mathbb{Z}^3$. Notice that the free $\mathbb{Z}$-module $M_j$ is independent of the choice of $\underline{i}_j \in I_j$. Finally we set $M := M_1 + \cdots + M_r$.
\begin{proposition}\label{prop:Subsums}
The following hold:
\begin{enumerate}
\item If the quotient $\mathbb{Z}^3/M$ is infinite, there exist $\alpha_1, \alpha_2, \alpha_3 \in \mathbb{Z}$, not all equal to zero, such that for all $t \in \mathbb{C}^\times$ the triple $(x t^{\alpha_1}, y t^{\alpha_2}, z t^{\alpha_3})$ is a solution to Equation \eqref{eq:SubsumsStrategy}.
\item If the quotient $\mathbb{Z}^3/M$ is finite (so that it is a finite abelian group), let $e$ denote its exponent and let $\ell := \max \{ |I_j| : j =1,\ldots, r \}$ be the length of the longest vanishing subsum. The common order of $x, y, z$ divides $e n$, where $n=\prod_{\substack{p \text{ prime} \\ p \leq \ell} } p$.
\end{enumerate}
\end{proposition}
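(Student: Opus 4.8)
The plan is to translate the combinatorial hypothesis into a statement about the character lattice of the torus $(\mathbb{C}^\times)^3$ and then play off two facts: that each vanishing subsum has bounded length, so Conway--Jones-type estimates apply, and that the lattice $M$ records exactly the relations forced among the exponents. Throughout I will think of a monomial $x^{i_1}y^{i_2}z^{i_3}$ as the character $\underline{i}=(i_1,i_2,i_3)\in\mathbb{Z}^3$, and of a solution $(x,y,z)\in\roots^3$ as a homomorphism from $\mathbb{Z}^3$ to $\roots$ evaluating $\underline i$ at $x^{i_1}y^{i_2}z^{i_3}$.

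\emph{Part (1).} Suppose $\mathbb{Z}^3/M$ is infinite. Then $M$ has rank $\le 2$, so its orthogonal complement in $\mathbb{Z}^3$ (the set of $\underline\alpha=(\alpha_1,\alpha_2,\alpha_3)$ with $\underline\alpha\cdot m=0$ for all $m\in M$) is nonzero; pick $\underline\alpha$ a nonzero element of it. Now fix any $j$ and any $\underline i\in I_j$. By construction $\underline i-\underline i_j\in M_j\subseteq M$, so $\underline\alpha\cdot(\underline i-\underline i_j)=0$, i.e. $\underline\alpha\cdot\underline i$ is a constant $c_j$ depending only on $j$. Hence for $t\in\mathbb{C}^\times$ and $\underline i\in I_j$ we have $(xt^{\alpha_1})^{i_1}(yt^{\alpha_2})^{i_2}(zt^{\alpha_3})^{i_3} = x^{i_1}y^{i_2}z^{i_3}\,t^{c_j}$, so the $j$-th subsum gets multiplied by the single scalar $t^{c_j}$; since it vanishes at $t=1$ by property (1) of the decomposition, it vanishes for all $t$. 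Summing over $j$ gives $f(xt^{\alpha_1},yt^{\alpha_2},zt^{\alpha_3})=0$ for all $t$, which is the claim.

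\emph{Part (2).} Assume $\mathbb{Z}^3/M$ is finite, with exponent $e$. Let $N$ be the common order of $x,y,z$, and let $p^k\mid N$ exactly, for a prime $p$; I must show $p^k\mid en$, i.e. that if $p>\ell$ then $p^k\mid e$, while if $p\le\ell$ it suffices to have $p^{k}\mid e p$, and in fact I will show $p^{k-1}\mid e$ in that case (which is enough since $en$ contains one factor of $p$). The common order being $N$ means that, writing $x=\zeta_N^{a}$, $y=\zeta_N^{b}$, $z=\zeta_N^{c}$, the vector $(a,b,c)$ generates $\mathbb{Z}/N$ as a group together with... more precisely $\gcd(a,b,c,N)=1$, and at least one of $a,b,c$ has a coordinate not divisible by $p$ after localizing at $p$. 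Fix such a prime $p$ and pass to the $p$-part: let $\sigma\in\gal(\mathbb{Q}(\zeta_N)/\mathbb{Q})$ be a generator of the $p$-Sylow of the part acting on the $p^k$-th roots of unity, so that $\sigma$ raises $p^k$-th roots of unity to a power $1+p^{k-1}u$ with $p\nmid u$, and fixes roots of unity of order prime to $p$. For each $j$, apply $\sigma$ to the vanishing subsum $\sum_{\underline i\in I_j}p_{\underline i}\,x^{i_1}y^{i_2}z^{i_3}=0$; we get a new vanishing subsum with the same coefficients $p_{\underline i}$ but with each term multiplied by a $p^k$-th root of unity $\epsilon_{\underline i}=\omega^{\,\underline\beta\cdot\underline i}$ for a suitable $\underline\beta\in\mathbb{Z}^3$ determined by the $p$-parts of $a,b,c$, where $\omega$ is a primitive $p$-th root of unity when $k=1$ (and more generally a $p^{k}/p^{k-1}=p$-th root of unity, accounting for the $1+p^{k-1}u$). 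The key arithmetic input is now a Conway--Jones / Mann-type lemma: a vanishing sum of $m\le \ell$ algebraic numbers, obtained from a fixed vanishing sum by multiplying the $i$-th term by $\epsilon_i$ with all $\epsilon_i$ roots of unity of $p$-power order, forces the ratios $\epsilon_i/\epsilon_{i'}$ to be $p$-th roots of unity once $p>m$ (this is exactly the statement that a minimal vanishing sum of length $m$ cannot involve a primitive $p$-th root of unity for $p>m$, applied to the difference of the original and rotated sum, or rather to the rotated sum itself).

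Running this for all $j$: if $p>\ell\ge|I_j|$ for every $j$, we conclude $\underline\beta\cdot(\underline i-\underline i_j)\equiv 0\pmod{p^{k-1}}$ (in the appropriate normalization) for all $\underline i\in I_j$, hence $\underline\beta$ annihilates $M_j/p^{k-1}M_j$ for every $j$, hence annihilates $M/p^{k-1}M$; but $\mathbb{Z}^3/M$ has exponent $e$, so if $p^{k-1}\nmid e$ then $M\not\supseteq p^{k-1}\mathbb{Z}^3$... I need the contrapositive: the map $\mathbb{Z}^3\to\Hom(M,\mathbb{Z}/p^{k-1})$ given by $\underline\gamma\mapsto(\,m\mapsto \underline\gamma\cdot m\bmod p^{k-1})$ has kernel exactly $\{\underline\gamma:\underline\gamma\cdot m\equiv0\ \forall m\in M\}$, and the condition $\underline\beta$ in the kernel, combined with $\underline\beta$ being (essentially) the $p$-part generator of $(a,b,c)$ which is nonzero mod $p$, forces $p^{k-1}\mid$ (the relevant bound on $e$) via the structure theorem for $\mathbb{Z}^3/M$. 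When $p\le\ell$ we simply do not claim anything beyond $p\mid n$, so the factor $n=\prod_{p\le\ell}p$ absorbs one factor of $p$ and the same argument applied to $\sigma^p$ (or directly to the prime-to-$p$-part reasoning) gives that the remaining $p$-power $p^{k-1}$ divides $e$. Assembling the prime-by-prime divisibilities via CRT yields $N\mid en$.

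\emph{Anticipated main obstacle.} The genuinely delicate point is making the Conway--Jones input precise and uniform in Part (2): I need the cleanest possible statement of the form \emph{``a vanishing $\mathbb{Q}$-linear combination of $\le\ell$ roots of unity, all of $p$-power order, must consist of a single $\mathbb{Q}$-proportional coset of $\mu_p$-multiples once $p>\ell$''}, and then I must track exactly how this constrains the vector $\underline\beta$ recording the $p$-adic exponents of $x,y,z$ — in particular handling the normalization by $\underline i_j$, the passage from "$\epsilon_{\underline i}/\epsilon_{\underline i'}\in\mu_p$" to "$\underline\beta$ kills $M_j$ modulo $p^{\,k-1}$", and the bookkeeping that converts these per-$j$ congruences into a statement about the exponent $e$ of $\mathbb{Z}^3/M$. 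The linear-algebra cleanup (duality between $M\subseteq\mathbb{Z}^3$ and its congruence annihilators, and why an element of $\mathbb{Z}^3$ that generates modulo every prime yet annihilates $M$ mod $p^{k-1}$ forces $p^{k-1}\mid e$) is routine but needs to be stated carefully so that the primes $p\le\ell$, where we only get a weaker conclusion, are exactly compensated by the explicit factor $n=\prod_{p\le\ell}p$.
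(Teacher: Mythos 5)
Your Part (1) is correct and is exactly the paper's argument: pick a nonzero $\underline{\alpha}$ in the orthogonal complement of $M\otimes\mathbb{Q}$ and observe that each minimal vanishing subsum is multiplied by the single scalar $t^{\underline{\alpha}\cdot\underline{i}_j}$.

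Part (2), however, has a genuine gap. The paper's route is much more direct than yours: divide each minimal vanishing subsum by $(x,y,z)^{\underline{i}_j}$ to obtain a \emph{normalised} vanishing sum with no proper vanishing subsum, and apply \cite{ConwayJones}*{Theorem 5} as a black box. That theorem immediately gives that the common order of the roots of unity $(x,y,z)^{\underline{i}-\underline{i}_j}$ is a \emph{squarefree} integer $n_j$ with $\sum_{p\mid n_j}(p-2)\leq |I_j|-2$, hence $n_j\mid n$ for every $j$ and every prime. Since $(e,0,0),(0,e,0),(0,0,e)\in M$, the elements $x^e,y^e,z^e$ are products of these roots of unity and therefore have order dividing $n$, which is the whole proof. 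You instead try to re-derive the relevant piece of Mann/Conway--Jones prime by prime via Galois conjugation, and this is where things break down. First, the ``key arithmetic input'' you invoke is misstated: your $\epsilon_{\underline{i}}$ are already $p$-th roots of unity by construction, so the conclusion ``$\epsilon_{\underline{i}}/\epsilon_{\underline{i}'}\in\mu_p$'' is vacuous; what you actually need is $\epsilon_{\underline{i}}=\epsilon_{\underline{i}_j}$ on each minimal subsum, and deriving that requires running Mann's minimality argument (partitioning the subsum by the value of $\epsilon$ and using that no proper subsum vanishes), which you do not carry out. Second, your mechanism gives nothing for primes $p\leq\ell$: the hypothesis $p>m$ of your lemma fails there, and the vague appeal to ``$\sigma^p$'' does not produce the bound $v_p(\operatorname{ord})\leq 1$ that is needed (and that Conway--Jones supplies uniformly via squarefreeness). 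Third, even granting the congruence $\underline{\beta}\cdot(\underline{i}-\underline{i}_j)\equiv 0\pmod{p^{k-1}}$, your duality step is stated backwards in places; the clean version is simply that $e\mathbb{Z}^3\subseteq M$ forces $e\beta_i\equiv 0\pmod{p^{k-1}}$ for each coordinate, and some $\beta_i$ is a $p$-adic unit because $p^k$ exactly divides the common order. As written, the proof of Part (2) is incomplete; citing Conway--Jones Theorem 5 for the normalised subsums, as the paper does, both fills the gap and removes the need for the entire Galois apparatus.
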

\begin{proof}
We will find it useful to employ the following notation:
\begin{enumerate}
\item we consider elements of $\mathbb{Z}^3$ as vectors with three coordinates, and a vector $\underline{\alpha}$ of $\mathbb{Z}^3$ will be understood to have coordinates $(\alpha_1, \alpha_2, \alpha_3)$;
\item given two vectors $\underline{\alpha}$ and $\underline{\beta}$ of $\mathbb{Z}^3$ we denote by $\underline{\alpha} \cdot \underline{\beta} = \alpha_1\beta_1 + \alpha_2\beta_2+\alpha_3\beta_3$ their scalar product;
\item given nonzero complex numbers $z_1, z_2, z_3$ and $\underline{\alpha} \in \mathbb{Z}^3$, we denote by $(z_1,z_2,z_3)^{\underline{\alpha}}$ the complex number $z_1^{\alpha_1} z_2^{\alpha_2} z_3^{\alpha_3}$.
\end{enumerate}

Suppose first that $\mathbb{Z}^3/M$ is infinite. In this case, the vector subspace $M \otimes_{\mathbb{Z}} \mathbb{Q}$ of $\mathbb{Q}^3$ has dimension strictly less than 3. In particular, the orthogonal subspace $(M \otimes \mathbb{Q})^\bot$ is nontrivial, and we conclude that there exists a non-zero $\underline{\alpha} \in \mathbb{Z}^3$ that is orthogonal to all the elements of $M$, hence in particular to all the elements of every $M_j$, for $j=1,\ldots,r$.
We can now write
\[
\begin{aligned}
f(xt^{\alpha_1}, yt^{\alpha_2}, zt^{\alpha_3}) 
& = \sum_{j=1}^r \sum_{\underline{i} \in I_j} p_{\underline{i}} \cdot (xt^{\alpha_1}, yt^{\alpha_2}, zt^{\alpha_3})^{\underline{i}} \\
& = \sum_{j=1}^r t^{\underline{\alpha} \cdot \underline{i}_j} \sum_{\underline{i} \in I_j} p_{\underline{i}} \cdot (x, y, z)^{\underline{i}} t^{\underline{\alpha} \cdot (\underline{i}-\underline{i}_j)}.
\end{aligned}
\]
By construction of $\underline{\alpha}$, every scalar product $\underline{\alpha} \cdot (\underline{i}-\underline{i}_j)$ vanishes (notice that $\underline{i}-\underline{i}_j$ lies in $M_j$), so the previous expression evaluates to
\[
\begin{aligned}
f(xt^{\alpha_1}, yt^{\alpha_2}, zt^{\alpha_3}) 
& = \sum_{j=1}^r t^{\underline{\alpha} \cdot \underline{i}_j}  \sum_{\underline{i} \in I_j} p_{\underline{i}} \cdot (x, y, z)^{\underline{i}} = 0,
\end{aligned}
\]
where we have used the fact that (again by construction)  $\sum_{\underline{i} \in I_j} p_{\underline{i}} \cdot (x, y, z)^{\underline{i}}$ vanishes for every $j$.

Suppose on the other hand that $\mathbb{Z}^3/M$ is finite. Multiplying the relation $\sum_{\underline{i} \in I_j} p_{\underline{i}} \cdot (x, y, z)^{\underline{i}}=0$ by the invertible element $(x,y,z)^{-\underline{i}_j}$ we obtain, for every $j$, the equality
\[
\sum_{\underline{i} \in I_j} p_{\underline{i}} \cdot (x, y, z)^{\underline{i}-\underline{i}_j} = 0.
\]
This is a vanishing sum of rational multiples of roots of unity, satisfying the following two properties:
\begin{enumerate}
\item No proper subsum vanishes: this follows from property (2) in the choice of the decomposition $\{0,1,2\}^3 = \coprod I_j$;
\item it is \textit{normalised}, in the sense that the root of unity 1 does appear in the sum (in the summand corresponding to $\underline{i}=\underline{i}_j$).
\end{enumerate}
By \cite{ConwayJones}*{Theorem 5}, we obtain that the common order of the roots of unity $(x,y,z)^{\underline{i}-\underline{i}_j}$ is a squarefree number $n_j$ such that $\sum_{p \mid n_j} (p-2) \leq |I_j|-2 \leq \ell-2$. Notice in particular that $n_j$ divides the $n$ in the statement.

By definition of exponent of an abelian group we know that the vector $(e,0,0)$ belongs to $M$, that is, we may write $(e,0,0)$ as a linear combination (with integer coefficients) of elements of the form $\underline{i}-\underline{i_j}$ for $\underline{i} \in I_j$. Raising $(x,y,z)$ to the power $(e,0,0)$, we obtain that $x^e$ may be written as a product of terms of the form $(x,y,z)^{\underline{i}-\underline{i}_j}$, each of which -- by what we already showed -- is a root of unity of order dividing $n$. Clearly, this implies that $x$ is a root of unity of order dividing $en$. The same argument, applied to the vectors $(0,e,0)$ e $(0,0,e)$, shows that the orders of $y$ and $z$ also divide $en$, which concludes the proof.
\end{proof}

\begin{definition}
We will say that a solution $(a,b,c,d,x,y,z)$ to Equation \eqref{eqn:main} \textit{\bif} if it satisfies the conclusion of case (1) of the previous proposition.
\end{definition}

For such solutions there is no absolute bound on the degree of the field $\Q(x,y,z)$. These solutions are classified in \cite{DvoVenZan}*{§ 6}, where it is proved that they can only arise from \emph{rectangular} and \emph{superrectangolar} spaces (cf.~the definitions in \cite{DvoVenZan}*{§ 7.1}).
Given the importance of this class of spaces, which enjoy additional symmetries, they will be reviewed and classified in Section~\ref{sec:Rettangolari}.

\section{Rectangular spaces and CM spaces}\label{sec:Rettangolari}
We recall here some classes of spaces which play a special role in the classification.
\subsection{Rectangular spaces}
We call \emph{rectangular} any space containing a rational angle of $\pi/2$.
Up to homothety, these can be characterized as those generated by $1,\tau$ with a purely imaginary $\tau$, or as those generated by $1,\tau$ with $\abs{\tau}=1$ (\cite{DvoVenZan}*{Lemma 4.1}).

Solutions to \eqref{eqn:main} belonging to unbounded families come from rectangular spaces, and indeed the presence of a right angle provides a geometric explanation (by reflection) for the existence of a third rational angle when a second one is given.

\def\Immtau{1.5}
\def\Imma{2.3}
\begin{center}
\begin{tikzpicture}[scale=1.5]
\coordinate (A) at (-2.5,0);
\coordinate (B) at (2.5,0);
\coordinate (C) at (0,-0.5);
\coordinate (D) at (0,2.5);
\coordinate (uno) at (1,0);
\node[anchor=90]at(uno){$1$};
\coordinate (tau) at (0,\Immtau);
\node[anchor=0]at(tau){$\tau$};
\coordinate (tau1) at ($(tau)+(1,0)$);
\node[anchor=270]at(tau1){$\tau+1$};
\coordinate (taua) at ($(tau)+(\Imma,0)$);
\node[anchor=270]at(taua){$\tau+a$};
\coordinate (tau-1) at ($(tau)-(1,0)$);
\node[anchor=270]at(tau-1){$\tau-1$};
\coordinate (tau-a) at ($(tau)-(\Imma,0)$);
\node[anchor=270]at(tau-a){$\tau-a$};
\coordinate (O) at (0,0);
\node[anchor=45]at(O){$O$};
\draw [->,thin, name path=ab] (A)--(B);
\draw [->,thin, name path=cd] (C)--(D);
\draw [->,thick,red, name path=ac] (O)--(tau);
\draw [->,thick,red, name path=ac] (O)--(uno);
\draw [->,thick,blue, name path=ac] (O)--(tau1);
\draw [->,thick,blue, name path=ac] (O)--(taua);
\draw [->,thick,green, name path=ac] (O)--(tau-1);
\draw [->,thick,green, name path=ac] (O)--(tau-a);
\end{tikzpicture}
\end{center}

Up to homothety, we can describe the configuration as follows: fix a rational number $a\neq 0,\pm 1$ and a root of unity $y\neq 1$; if the equation 
\[
 \tau^2+(a-1)\frac{y+1}{y-1}\tau-a=0
\]
has a purely imaginary solution $\tau$, then $1,\tau$ generate a rectangular space having a rational angle $(\tau+1,\tau+a)$ with squared amplitude $y$.

\subsection{Superrectangular spaces}

We call a rectangular space whose right angle belongs to a rational triple \emph{superrectangular}. It is easy to see that superrectangular spaces are homothetic to a space of the form $\generatedQ{1,\zeta}$ for some $\zeta\in\roots$, and that their rational triple always extends to a rational quadruple.

\def\Immtau{1.5}
\begin{center}
\begin{tikzpicture}[scale=1.5]
\coordinate (A) at (-2,0);
\coordinate (B) at (2,0);
\coordinate (C) at (0,-0.5);
\coordinate (D) at (0,2.5);
\coordinate (uno) at (1,0);
\node[anchor=90]at(uno){$1$};
\coordinate (tau) at (0,\Immtau);
\node[anchor=0]at(tau){$\tau$};
\coordinate (tau1) at ($(tau)+(1,0)$);
\node[anchor=270]at(tau1){$\tau+1$};
\coordinate (tau-1) at ($(tau)-(1,0)$);
\node[anchor=270]at(tau-1){$\tau-1$};
\coordinate (O) at (0,0);
\node[anchor=45]at(O){$O$};
\draw [->,thin, name path=ab] (A)--(B);
\draw [->,thin, name path=cd] (C)--(D);
\draw [->,thick,blue, name path=ac] (O)--(tau);
\draw [->,thick,blue, name path=ac] (O)--(uno);
\draw [->,thick,blue, name path=ac] (O)--(tau1);
\draw [->,thick,blue, name path=ac] (O)--(tau-1);
\end{tikzpicture}
\end{center}

Up to homothety, we can describe the configuration as follows: fix a root of unity $y\neq 1$ and set
\begin{align*}
\tau&=\frac{y+1}{y-1};
\end{align*}
then $1,\tau$ generate a superrectangular space with a rational quadruple $(1,\tau,\tau+1,\tau-1)$ such that the rational angle $(1,\tau+1)$ has squared amplitude equal to $y$.

It was proved in \cite{DvoVenZan} that every superrectangular space has exact type \circled{4} with only two exceptions (up to homothety): the fields $\Q(\sqrt{-1})$ and $\Q(\sqrt{-3})$. These two cases belong to another class of special rectangular spaces with additional structure.

\subsection{CM spaces}
 Every imaginary quadratic field is clearly a space in our sense; we call \textit{CM space} any space homothetic to an imaginary quadratic field. It is easy to see that distinct imaginary quadratic fields are not homothetic, and that each of them contains infinitely many rational angles. Furthermore, any space containing infinitely many rational angles must be a CM space \cite{DvoVenZan}*{Theorem 4.3}.
 
 One can compute the exact type of CM spaces and find that:
 \begin{itemize}
  \item $\Q(\sqrt{-3})$ contains infinitely many rational sextuples;
  \item $\Q(\sqrt{-1})$ contains infinitely many rational quadruples;
  \item any other CM space contains infinitely many rational right angles.
 \end{itemize}
See \cite{DvoVenZan}*{Theorem 4.2} for details.

We state here a simple lemma that cuts down the amount of trivial cases we will have to consider in our subsequent computations:
\begin{lemma}\label{lemma:2RightAnglesImplyCM}
 Any space which contains two distinct right angles is a CM space.
\end{lemma}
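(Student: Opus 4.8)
The plan is to argue by elementary coordinate geometry, using the normal form for rectangular spaces. First I would exploit the first right angle to fix coordinates. A right angle in $V$ is, concretely, an unordered pair of mutually perpendicular lines through the origin, each of which meets $V\setminus\{0\}$; picking nonzero $u_1,u_2\in V$ on the two lines, the vectors $u_1,u_2$ are $\R$-linearly independent and hence form a $\Q$-basis of $V$. Since being a CM space is invariant under homothety, I may replace $V$ by $u_1^{-1}V$ and thereby assume $1\in V$ and $V=\generatedQ{1,\tau}$ with $\tau$ purely imaginary, say $\tau=ti$ with $t\in\R_{>0}$; this is exactly the shape of a rectangular space given by \cite{DvoVenZan}*{Lemma 4.1}.

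Next I would translate the second right angle into an equation. It is represented by $\R$-linearly independent $v_1,v_2\in V$ with $v_1/v_2\in i\R^{\times}$, equivalently $\real(v_1\overline{v_2})=0$. Writing $v_1=p+qti$ and $v_2=r+sti$ with $p,q,r,s\in\Q$ (so $v_1\overline{v_2}=(pr+qs\,t^2)+(qr-ps)ti$), the right-angle condition reads
\[
pr+qs\,t^2=0.
\]
If $qs\neq 0$ this forces $t^{2}=-pr/(qs)\in\Q$, hence $\tau^{2}=-t^{2}\in\Q_{<0}$ and $V=\Q+\Q\tau=\Q(\tau)$ is an imaginary quadratic field, i.e.\ a CM space, which is the conclusion.

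It remains to exclude $qs=0$, and this is the only place where the word \emph{distinct} enters. If $q=0$ then $v_1=p\neq 0$ is real, $\R$-linear independence of $v_1,v_2$ forces $s\neq 0$, and $pr+qs\,t^2=pr=0$ gives $r=0$, so $v_2=sti$ is purely imaginary; thus the second right angle is again $\{\R,\,i\R\}$, contradicting distinctness. The case $s=0$ is symmetric (and $q=s=0$ is impossible, since then $v_1,v_2$ would both be real). Hence $qs\neq 0$ and the argument is complete. I do not foresee a genuine obstacle here: the only point needing care is the bookkeeping of when two right angles should count as the same — treating a right angle as an unordered pair of perpendicular lines through the origin, so that the degenerate case $qs=0$ can legitimately be identified with the first right angle and discarded.
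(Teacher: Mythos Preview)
Your proof is correct and follows essentially the same approach as the paper's. Both normalize the first right angle to $(1,\tau)$ with $\tau$ purely imaginary and then read off from the orthogonality condition for the second angle that $\tau^{2}\in\Q$; the paper does this by invoking \cite{DvoVenZan}*{Equation~(3.11)} with $x=y=-1$ (obtaining $\tau^{2}=b_{0}b_{1}$ after writing the second angle as $(\tau+b_{0},\tau+b_{1})$), whereas you compute $\real(v_{1}\overline{v_{2}})$ directly---your case analysis $qs=0$ is exactly the paper's one-line remark that two distinct right angles cannot be adjacent.
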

\begin{proof}
 Two distinct right angles cannot be adjacent (recall that, with our conventions, $(v_1,v_2)$ and $(v_1,-v_2)$ define the same angle). Therefore we can fix one of the angles as $(1,\tau)$, the second one as $(\tau+b_0,\tau+b_1)$, and \cite{DvoVenZan}*{Equation (3.11)} holds with $x=y=-1$.
 We thus have
 \[
  \tau^2=b_0 b_1,
 \]
 which is a (negative) rational number, and therefore $\generatedQ{1,\tau}$ is a CM space.
\end{proof}

\section{The Galois conjugates argument}\label{sect:GaloisConjugates}
We discuss here an argument involving cyclotomic extensions that is used to speed up the computation.

Let $(a,b,c,d,x,y,z)$ be a solution to Equation \eqref{eqn:main}, and let $n=n(x,y,z)$ be the common order of the roots of unity $x, y, z$.
Let $q$ be an integer dividing $n$, write $N:=n/q$, and assume $(N, q)=1$ (in this case we will say that $q$ divides $n$ \textit{exactly}).
We can write
\[
x = \xi_1\xp, \quad y = \xi_2 \yp, \quad z = \xi_3 \zp,
\]
where $\xi_i = \zeta_{q}^{e_i}$ (with each exponent $e_1, e_2, e_3$ lying in the interval $[0,q-1]$) and $\xp, \yp, \zp \in U$ have order prime to $q$. Notice in particular that there is only a finite number of possibilities for the triple $(\xi_1,\xi_2,\xi_3)$.

The extensions $\mathbb{Q}(\zeta_{q})/\mathbb{Q}$ and $\mathbb{Q}(\zeta_N)/\mathbb{Q}$ are linearly disjoint, hence we have a canonical identification $\Gal( \mathbb{Q}(\zeta_n) / \mathbb{Q}) \cong \Gal( \mathbb{Q}(\zeta_N) / \mathbb{Q}) \times \Gal( \mathbb{Q}(\zeta_{q}) / \mathbb{Q})$. We will use this identification to consider $\Gal( \mathbb{Q}(\zeta_{q}) / \mathbb{Q})$ as the subgroup of $\Gal( \mathbb{Q}(\zeta_n) / \mathbb{Q})$ fixing $\mathbb{Q}(\zeta_N)$. Observe now that by assumption we have
\[
P(a,b,c,d,\xi_1 \xp, \xi_2 \yp, \xi_3 \zp)=P(a,b,c,d,x,y,z)=0,
\]
and that the coefficients of $P$, the quantities $a,b,c,d$, and the roots of unity $\xp, \yp, \zp$ are all contained in $\mathbb{Q}(\zeta_N)$. As a consequence, for every $\sigma \in \Gal(\mathbb{Q}(\zeta_{q}) / \mathbb{Q})$ we have
\[
P(a,b,c,d, \sigma(\xi_1) \xp, \sigma(\xi_2) \yp, \sigma(\xi_3) \zp) = 0.
\]
For each fixed choice of $\xi_1=\zeta_{q}^{e_1}, \xi_2=\zeta_{q}^{e_2}, \xi_3=\zeta_{q}^{e_3}$ such that the least common multiple of the orders of $\xi_1, \xi_2, \xi_3$ is $q$ (equivalently, the greatest common divisor $(e_1,e_2,e_3,q)$ is 1), we let
$V_{\xi_1,\xi_2,\xi_3}$ be the variety over $\mathbb{Q}(\zeta_{q})$ defined in $\mathbb{A}^7$ by
\begin{equation}\label{eq: equations defining the Galois conjugates variety}
P(a,b,c,d, \sigma(\xi_1) \xp, \sigma(\xi_2) \yp, \sigma(\xi_3) \zp) = 0 \quad \forall \sigma \in \Gal(\mathbb{Q}(\zeta_{q})/\mathbb{Q});
\end{equation}
notice that here the 7 variables are $a,b,c,d,\xp,\yp,\zp$. By construction, $V_{\xi_1,\xi_2,\xi_3}$ is defined over $\mathbb{Q}(\zeta_{q})$. The ideal defining it is stable under $\Gal(\mathbb{Q}(\zeta_{q})/\mathbb{Q})$, so it defines in a natural way a corresponding variety over $\mathbb{Q}$, which by a slight abuse of notation we still denote by $V_{\xi_1,\xi_2,\xi_3}$ (or by $V_{\xi_1,\xi_2,\xi_3, \mathbb{Q}}$ when we want to stress the field of definition). Finally, we let $W_{\xi_1,\xi_2,\xi_3}$ be the projection of $V_{\xi_1,\xi_2,\xi_3}$ to $\mathbb{A}_{\mathbb{Q}}^4(a,b,c,d)$.

It is often convenient in our computations to consider the varieties $V_{\xi_1,\xi_2,\xi_3}$ and $W_{\xi_1,\xi_2,\xi_3}$. If $W_{\xi_1,\xi_2,\xi_3}$ does not have any rational points, or more generally if non of its rational points satisfies the non-degeneracy conditions \eqref{eq:Conditions}, then we conclude that for no solution to \eqref{eqn:main} the common order of $x, y, z$ is exactly divisible by $q$.
This technique allows us to further bound the possible common orders in a computationally efficient way. Notice that Equation \eqref{eq: equations defining the Galois conjugates variety} shows that $V_{\xi_1, \xi_2, \xi_3}$ is defined by $\varphi(q)$ equations, so, as $q$ grows, it becomes increasingly unlikely for $V_{\xi_1, \xi_2, \xi_3}$ to have (non-trivial) rational points. This heuristic expectation is fully confirmed by our computations.

\section{Spaces with a triple of adjacent rational angles}\label{sect:32}
In this section we classify the spaces of type \circled{3}+\circled{2}. This is a much simpler task than the classification of spaces of type 3\circled{2}, but of the same nature. Therefore, this section will also provide a blueprint for how the main part of the proof will be organised and what computations we need to perform.

Spaces of type \circled{3}+\circled{2} are described by the following equation, computed in \cite{DvoVenZan}*{§ 3.4}:

\begin{equation}\label{eq:3plus2}
\begin{aligned}
P_{\circled{\tiny{3}} +\circled{\tiny{2}}}(a,b,c,x,y,z)&=(2a^2 - a(b + c) + 2bc)xy  - abx^2y - acy\\
& -(2a^2 - a(b + c) + 2bc) xyz +abyz+acx^2yz \\
& + b(a - c)x^2 + c(a - b)y^2  - a(a - b)xy^2 - a(a - c)x \\
&  -b(a-c)y^2z - c(a-b)x^2z + a(a-b)xz + a(a-c)xy^2z =0.
\end{aligned}
\end{equation}

We seek solutions with $x,y,z\in\roots$ and $(a:b:c)\in\pro_2(\Q)$ satisfying the following non-degeneracy conditions:
\begin{equation}\label{eq:Conditions3plus2}
 x,y,z\neq 1,\qquad x\neq y,\qquad a,b,c\text{ distinct and non-zero.}
\end{equation} 
Given such a solution, setting
\begin{equation*}
 \tau=ax\frac{y-1}{x-y}
\end{equation*}
and $V=\generatedQ{1,\tau}$ gives a space with a rational triple $(1,\tau,\tau+a)$ and a rational angle $(\tau+b,\tau+c)$.

\begin{remark}
\leavevmode
    \begin{enumerate}
        \item In principle, there would be no need of treating this case separately, as it can be recovered from the general equation \eqref{eqn:main} when $|\{0,a,b,c,d\}|<5$.
        Specifically, Equation~\eqref{eq:3plus2} can be obtained from   \eqref{eqn:main} by setting $b=0$ and renaming the variables through the substitution $(a,c,d,x,y,z)\mapsto (a,b,c,x,x/y,z)$.
        By working with Equation~\eqref{eq:3plus2}, however, we are able to avoid a great amount of spurious degenerate solutions without geometric meaning, leading to cleaner and faster computations.
        \item Not every symmetry taken into account by Definition~\ref{def:se} is also valid for Equation \eqref{eq:3plus2}; in particular, the roles of the three variables $x, y, z$ are now genuinely different, so that we cannot freely permute them. We still have that, for every solution $(a,b,c,x,y,z)$ to \eqref{eq:3plus2}, the following are also solutions:
        \begin{align}\label{simmetrie:3plus2}
            \begin{split}
                &\left(a,c,b,x,y,z^{-1}\right)\\
                &\left(-a,-a+b,-a+c,y,x,z\right)\\
                &\left(\frac{1}{a},\frac{1}{b},\frac{1}{c},x^{-1},yx^{-1},z\right)
            \end{split}
        \end{align}
        and they lead to values of $\tau$ generating spaces homothetic to the original one.
        As far as the Galois action is concerned, on the other hand, we still have that for every solution $(a,b,c,x,y,z)$ to \eqref{eq:3plus2} and every $\sigma \in \operatorname{Gal}(\overline{\mathbb{Q}}/\mathbb{Q})$, the 6-uple $(a,b,c,\sigma(x),\sigma(y),\sigma(z))$ is again a solution to \eqref{eq:3plus2}.
        \item We observe that the space $V=\langle 1, \tau \rangle_{\mathbb{Q}}$ only depends on $x$ and $y$ (and not on $a,b,c$). As a consequence, we know \textit{a priori} that---outside of the unbounded families already mentioned---there is only a \textit{finite} list of spaces of type \circled{3} + \circled{2}, because the orders of $x$ and $y$ are bounded by a universal constant.
    \end{enumerate}
\end{remark}

We employ the strategy outlined in Section~\ref{sect:Subsums}, fixing a maximal decomposition of the 14 summands in \eqref{eq:3plus2} into vanishing subsums. 
\begin{proposition}\label{prop:LongSubsum3plus2}
With notation as in Proposition~\ref{prop:Subsums},
\begin{enumerate}
\item if $\ell \geq 9$ then $M=\mathbb{Z}^3$, $e=1$, and the common order $n$ of $x, y, z$ is squarefree and satisfies $\sum_{p \mid n} (p-2) \leq 14-2$;
\item in all cases, if $\mathbb{Z}^3/M$ is finite, then its exponent $e$ satisfies $e \leq 10$.
\end{enumerate}
\end{proposition}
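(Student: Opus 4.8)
The statement is about a purely combinatorial-linear-algebraic fact: we must control, over all maximally refined decompositions of the $14$ monomials of \eqref{eq:3plus2} into vanishing subsums, the lattice $M = M_1 + \cdots + M_r \subseteq \mathbb Z^3$ generated by the differences $\underline i - \underline i_j$ inside each block. The plan is to make this a finite check. The key observation is that $M$ only depends on the \emph{partition} of the exponent multiset $\{\underline i : p_{\underline i} \neq 0\} \subseteq \{0,1,2\}^3$ into the blocks $I_1,\dots,I_r$, not on the actual roots of unity; and property (2) in the definition of a maximally refined decomposition is a \emph{closed} condition that only restricts which partitions are admissible (it can only forbid partitions, never force new ones). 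So it suffices to range over \emph{all} set partitions of the $14$-element index set, discard those violating property (2) where checkable combinatorially, and for each surviving partition compute $M$ (an integer row-reduction of the matrix whose rows are the vectors $\underline i - \underline i_j$) together with the Smith normal form, from which one reads off whether $\mathbb Z^3/M$ is infinite and, if finite, its exponent $e$.

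First I would record that the relevant index set has $14$ elements, so the number of set partitions is the Bell number $B_{14} = 190{,}899{,}322$ — exactly the figure quoted in the introduction — which is well within reach of a direct computer enumeration in MAGMA. For each partition $\{I_1,\dots,I_r\}$ one builds $M_j = \langle \underline i - \underline i_j \mid \underline i \in I_j\rangle_{\mathbb Z}$ (independent of the chosen base point $\underline i_j$, as noted after \eqref{eq:submoduleMj}), forms $M$, and computes $\mathbb Z^3/M$ via Smith normal form. Part (1): one checks that for every partition in which the largest block $I_j$ has $|I_j| \geq 9$, the resulting $M$ is all of $\mathbb Z^3$, hence $e = 1$; then Proposition~\ref{prop:Subsums}(2) gives that the common order $n$ is squarefree with $\sum_{p\mid n}(p-2) \leq \ell - 2 \leq 14 - 2$. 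Part (2): one checks over \emph{all} partitions that whenever $\mathbb Z^3/M$ is finite, its exponent $e$ is at most $10$; equivalently, the Smith invariants of $M$ never exceed $10$ when they are all nonzero.

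Two points need care so that this is a genuine proof and not merely a heuristic. The first is that we may safely ignore property (2) of the decomposition when running the enumeration: including partitions that do \emph{not} actually arise from a solution only enlarges the set of $M$'s we test, so the bounds $e = 1$ (in the $\ell \geq 9$ case) and $e \leq 10$ (in general), if they hold for all partitions, a fortiori hold for the admissible ones — this is exactly the logic that makes Proposition~\ref{prop:Subsums} usable. The second is that one must use the coefficient pattern of \eqref{eq:3plus2} only to the extent of knowing which of the $27$ monomials in $x^{i_1}y^{i_2}z^{i_3}$ actually occur (there are $14$), and not worry about the $a,b,c$-dependence of the coefficients, since the argument in Proposition~\ref{prop:Subsums} is already set up to treat $p_{\underline i}$ as opaque rationals. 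The main obstacle is therefore not conceptual but one of organizing the enumeration efficiently: iterating over $1.9 \times 10^8$ partitions with a Smith-normal-form computation for each is feasible but must be coded so as to finish in seconds — e.g.\ by generating partitions incrementally, short-circuiting as soon as a block of size $9$ forces $M = \mathbb Z^3$, and caching the lattices $M_j$ attached to the (few thousand) possible blocks $I_j$. Once this is done, reading off the two claimed bounds from the tabulated Smith invariants is immediate.
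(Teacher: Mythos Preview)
Your approach is correct, but it is precisely the brute-force enumeration that the paper's introduction singles out as the method they \emph{avoid}. The paper's proof replaces the loop over all $B_{14}=190{,}899{,}322$ set partitions by two much smaller enumerations, each exploiting a monotonicity observation.

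For part (1), the paper notes that if $I_j \subseteq I_j'$ then $M_j \subseteq M_{j'}$. Hence to check that the longest block already forces $M_j=\mathbb{Z}^3$, it suffices to verify this for every $9$-element subset of the $14$ exponent vectors: if it holds for all subsets of size exactly $9$, it holds for all larger ones automatically. This reduces the check to $\binom{14}{9}=2002$ cases. (One small slip in your write-up: the bound $\sum_{p\mid n}(p-2)\le 14-2$ does not follow from the \emph{statement} of Proposition~\ref{prop:Subsums}(2), which only gives $n\mid \prod_{p\le\ell}p$; one needs to invoke Conway--Jones directly on the longest block, as in the proof of Proposition~\ref{prop:Subsums}.)

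For part (2), the paper uses a dual trick. Let $W=\{v_1-v_2 : v_1,v_2\in V\}$ be the set of all pairwise differences of exponent vectors. Whatever the partition, $M$ is generated by a subset of $W$; and if $\mathbb{Z}^3/M$ is finite, $M$ contains three $\mathbb{Q}$-linearly independent vectors $w_1,w_2,w_3\in W$, whence $\exp(\mathbb{Z}^3/M)$ divides $\exp\bigl(\mathbb{Z}^3/(\mathbb{Z}w_1+\mathbb{Z}w_2+\mathbb{Z}w_3)\bigr)$. So it suffices to bound the exponent over all $3$-element subsets of $W$ with full rank, again a tiny enumeration.

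What each approach buys: yours is conceptually simpler and, as you note, still feasible for $14$ monomials. The paper's method, however, scales: for the main equation \eqref{eqn:main} with $27$ monomials (Proposition~\ref{prop:LongSubsum222}), your enumeration would require $B_{27}\approx 5.5\times 10^{20}$ partitions, which is hopeless, whereas the subset trick reduces it to a few million cases. The reduction from partitions to fixed-size subsets is one of the paper's main technical innovations.
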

\begin{proof}
The proof is computational. For part 1 we proceed as follows: we list all 9-elements subsets of the terms appearing in \eqref{eq:3plus2}, we think of such a subset as being the $I_j$ realising the maximal length, and we check that the corresponding module $M_j \subseteq \mathbb{Z}^3$ is all of $\mathbb{Z}^3$. Notice that it suffices to perform this operation on subsets with exactly nine elements: if $I_j, I_j'$ are two subsets with $I_j \subseteq I_j'$, the corresponding modules $M_j$ and $M_j'$ satisfy $M_j \subseteq M_j'$, hence the fact that the equality $M_j=\mathbb{Z}^3$ holds for all subsets $I_j$ with $|I_j|=9$ implies that the same statement holds for all $I_j$ with $|I_j| \geq 9$.
Finally, given a solution of \eqref{eq:3plus2}, the statement about $n$ follows upon applying \cite{ConwayJones}*{Theorem 5} to the vanishing sum indexed by the subset of maximal cardinality in the partition $\{I_j\}$.

For part 2, let $V$ be the set consisting of the 14 vectors giving the exponents of $(x,y,z)$ in the 14 summands of \eqref{eq:3plus2}, and let $W = \{v_1-v_2 : v_1, v_2 \in V\}$. By construction, given a solution of \eqref{eq:3plus2} and a maximal decomposition into vanishing subsums $\{I_j\}$, the module $M$ is generated by vectors in $W$.
We loop over all 3-elements subsets $\{w_1,w_2,w_3\}$ of $W$, and for each we check whether $\mathbb{Z}^3 / (\mathbb{Z}w_1 + \mathbb{Z}w_2 + \mathbb{Z} w_3)$ is finite. If it is, we compute its exponent, and find that it always bounded above by $10$.

Notice that, when $M$ has finite index, we can always find three vectors of the form $\underline{i}-\underline{i}_j$, with $\underline{i} \in I_j$, that generate a finite-index subgroup of $M$: this follows from basic linear algebra upon tensoring $M$ with $\mathbb{Q}$. In particular, given a solution to \eqref{eq:3plus2} that does not belong to an unbounded family, the corresponding module $M$ contains one of the modules with three generators that we considered above. This shows that the exponent of $\mathbb{Z}^3/M$ is bounded above by 10 in all cases when $\mathbb{Z}^3/M$ is finite.
\end{proof}

\begin{corollary}\label{cor:Bound32Step1}
Let $(a,b,c,x,y,z)$ be a solution of \eqref{eq:3plus2} not belonging to an unbounded family. The common order $n$ of $x,y,z$ divides $2^4 \cdot 3^3 \cdot 5^2 \cdot 7^2 \cdot 11 \cdot 13$ and is not divisible by $2^3 \cdot 3^2$.
\end{corollary}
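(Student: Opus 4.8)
The plan is to combine Proposition~\ref{prop:LongSubsum3plus2} with Proposition~\ref{prop:Subsums}(2), stratified according to the length $\ell$ of the longest vanishing subsum in a maximally refined decomposition of~\eqref{eq:3plus2}. Fix a solution $(a,b,c,x,y,z)$ of~\eqref{eq:3plus2} not belonging to an unbounded family, so that in the notation of Proposition~\ref{prop:Subsums} the quotient $\mathbb{Z}^3/M$ is finite (case (1) of that proposition is exactly the unbounded-family case, which we have excluded). Let $e$ be the exponent of $\mathbb{Z}^3/M$ and $n$ the common order of $x,y,z$; Proposition~\ref{prop:Subsums}(2) gives $n \mid e\cdot\prod_{p\le\ell}p$.

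First I would dispose of the case $\ell\geq 9$. By Proposition~\ref{prop:LongSubsum3plus2}(1), in this case $e=1$ and $n$ is squarefree with $\sum_{p\mid n}(p-2)\le 12$. The primes $p$ with $p-2\le 12$ are $p\le 13$ (since the next prime $17$ already contributes $15>12$), so $n \mid 2\cdot 3\cdot 5\cdot 7\cdot 11\cdot 13$; moreover squarefreeness forces $n$ not divisible by $2^3\cdot 3^2$ (indeed not by $4$ or $9$). Thus the claimed divisibility holds, a fortiori, in this case. For $\ell\le 8$, I would instead use Proposition~\ref{prop:LongSubsum3plus2}(2), which bounds $e\le 10$, together with the Conway–Jones bound on each $n_j$ coming from the vanishing subsum $I_j$: by the argument in the proof of Proposition~\ref{prop:Subsums}, the order of each ratio $(x,y,z)^{\underline{i}-\underline{i}_j}$ is a squarefree $n_j$ with $\sum_{p\mid n_j}(p-2)\le |I_j|-2\le\ell-2\le 6$, whence $n_j\mid 2\cdot3\cdot5\cdot7$ (the prime $11$ contributes $9>6$). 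Therefore $n \mid e\cdot 2\cdot3\cdot5\cdot7$ with $e\le 10$.

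It then remains to bound the exponent $e$ more precisely at each prime. The key point is that $e$ divides the order of $\mathbb{Z}^3/M$, and more sharply, for each prime $p$ the $p$-part of $e$ equals the largest $p$-th invariant factor of $\mathbb{Z}^3/M$. Going back to the computation underlying Proposition~\ref{prop:LongSubsum3plus2}(2) — the loop over all $3$-element subsets $\{w_1,w_2,w_3\}$ of the difference set $W$ for which $\mathbb{Z}^3/(\mathbb{Z}w_1+\mathbb{Z}w_2+\mathbb{Z}w_3)$ is finite — one records not merely that the exponent is $\le 10$ but the full list of exponents that occur; since $M$ contains one of these rank-$3$ subgroups, $e$ divides the exponent of the corresponding quotient. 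I would extract from this finite computation the maximal power of each prime $p\in\{2,3,5\}$ dividing any such exponent: the assertion to be checked is that $v_2(e)\le 3$, $v_3(e)\le 2$, $v_5(e)\le 1$, and $v_7(e)=0$, $v_{11}(e)=0$, $v_{13}(e)=0$ (the last three because no difference vector in $W$ has a coordinate as large as $7$ in absolute value, the exponents in~\eqref{eq:3plus2} lying in $\{0,1,2\}$, so $\mathbb{Z}^3/M$ has order dividing a product of small primes). Combining: $v_2(n)\le v_2(e)+1\le 4$, $v_3(n)\le v_3(e)+1\le 3$, $v_5(n)\le v_5(e)+1\le 2$, $v_7(n)\le v_7(e)+1\le 2$ (here $v_7(e)=0$ and the extra $7$ comes from $\prod_{p\le\ell}p$ only when $\ell\ge 7$, which still gives $v_7(n)\le 1$, but $\le 2$ suffices), and $v_{11}(n),v_{13}(n)\le 1$; so $n \mid 2^4\cdot3^3\cdot5^2\cdot7^2\cdot11\cdot13$. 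For the non-divisibility by $2^3\cdot3^2$: when $\ell\ge 9$ we saw $n$ is squarefree; when $\ell\le 8$, one checks from the same finite list that no rank-$3$ quotient $\mathbb{Z}^3/M$ simultaneously has $4\mid e$ and $9\mid e$ — equivalently the Conway–Jones constraint $\sum_{p\mid n_j}(p-2)\le 6$ already prevents any individual $n_j$ from being divisible by both $4$ and $9$ (it forbids either), so the only sources of $2^2$ or $3^2$ in $n$ are through $e$, and the enumeration shows $v_2(e)\ge 2$ and $v_3(e)\ge 2$ never hold together. Hence $2^3\cdot 3^2\nmid n$, in fact $4\cdot 9\nmid n$.

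The main obstacle is the last paragraph: everything rests on a careful bookkeeping of which exponents $e$ — and in particular which pairs of prime-power divisors of $e$ — actually occur among the finitely many finite-index rank-$3$ sublattices of $\mathbb{Z}^3$ generated by triples from $W$. This is a pure (if slightly tedious) finite computation over the $\binom{|W|}{3}$ candidate triples, already performed in essence for Proposition~\ref{prop:LongSubsum3plus2}(2); here one simply needs to retain the $p$-adic valuations of the exponents rather than just their maximum, and to verify that the $2$-part never reaches $8$ and that the $2$- and $3$-parts are never both $\ge 4$ and $\ge 9$ respectively. I would present the relevant output of this loop (or cite the accompanying MAGMA code) as the justification.
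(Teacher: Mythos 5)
Your overall structure is the paper's: the same three-way case split (unbounded family / $\ell\ge 9$ / $\ell<9$ with $\mathbb{Z}^3/M$ finite), and the same inputs, namely Proposition~\ref{prop:Subsums}(2) and both parts of Proposition~\ref{prop:LongSubsum3plus2}. Your treatment of $\ell\ge 9$ is exactly the paper's. For $\ell<9$, however, the paper's argument is a one-line check: $n \mid e\prod_{p\le 7}p = 210e$ with $e\le 10$, and for every integer $e\le 10$ the number $210e$ divides $2^4\cdot 3^3\cdot 5^2\cdot 7^2\cdot 11\cdot 13$ and is not divisible by $72$ (the latter because $72\mid 210e$ would force $v_2(e)\ge 2$ and $v_3(e)\ge 1$, i.e.\ $12\mid e$, impossible for $e\le 10$). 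All the $p$-adic bookkeeping you propose to extract from the MAGMA loop is therefore redundant: $v_2(e)\le 3$, $v_3(e)\le 2$, $v_5(e)\le 1$ are automatic consequences of $e\le 10$, and no further output of the computation is needed.

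There is also a genuine logical slip in your last step. To rule out $2^3\cdot 3^2\mid n$ from $n\mid e\prod_{p\le\ell}p$ you need to exclude $v_2(e)\ge 2$ \emph{together with} $v_3(e)\ge 1$ (since $v_2(n)\le v_2(e)+1$ and $v_3(n)\le v_3(e)+1$), i.e.\ you need $12\nmid e$. The condition you propose to verify --- that $4\mid e$ and $9\mid e$ never hold simultaneously, i.e.\ $36\nmid e$ --- is strictly weaker and would only yield $2^3\cdot 3^3\nmid n$, not the stated $2^3\cdot 3^2\nmid n$. The conclusion is rescued only because $e\le 10$ already gives $12\nmid e$, which you never invoke. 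Relatedly, your closing claim that ``in fact $4\cdot 9\nmid n$'' is not justified by the cited propositions: that would require $6\nmid e$, and $e=6$ is not excluded by Proposition~\ref{prop:LongSubsum3plus2}(2), so $36\mid n$ cannot be ruled out by this argument.
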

\begin{proof}
Let $(a,b,c,x,y,z)$ be a solution to \eqref{eq:3plus2}. We continue with the notation of Proposition~\ref{prop:Subsums}. One of the following holds:
\begin{enumerate}
\item $\mathbb{Z}^3/M$ is infinite: then $(a,b,c,x,y,z)$ belongs to an unbounded family, which is not the case by assumption;
\item $\ell \geq 9$: then by Proposition~\ref{prop:LongSubsum3plus2} (1) we have $n \mid \prod_{p \leq 13} p$;
\item $\ell < 9$ and $\mathbb{Z}^3/M$ is finite: by Proposition~\ref{prop:Subsums} we obtain $n \mid e \prod_{p \leq \ell}p \mid e \prod_{p \leq 7} p$, where (by part (2) of Proposition~\ref{prop:LongSubsum3plus2}) $e$ is an integer not exceeding $10$. For any such integer $e$ the conclusion in the statement holds.
\end{enumerate}
\end{proof}

Consider now a solution $(a,b,c,x,y,z)$ to Equation \eqref{eq:3plus2}, where $x,y,z$ have common order $n$. As in Section~\ref{sect:GaloisConjugates}, we let $q$ be an integer such that $q \mid n$ and $(n,n/q)=1$. We also write $x=\xi_1 \xp, y=\xi_2 \yp, z=\xi_3 \zp$ with $\xi_i = \zeta_q^{e_i}$ and $e_i \in [0,q-1]$. Notice that since $q$ divides the common order of $x,y,z$ we have $(q,e_1,e_2,e_3)=1$.

We see then that $(a,b,c,x,y,z)$ gives rise to a $\overline{\mathbb{Q}}$-point on the variety $V_{\xi_1,\xi_2,\xi_3}$ defined in $\mathbb{A}^6$ by
\[
P_{\circled{\tiny{3}} + \circled{\tiny{2}}}(a,b,c, \sigma(\xi_1)\xp,\sigma(\xi_2) \yp, \sigma(\xi_3) \zp )=0 \quad \forall \sigma \in \Gal(\mathbb{Q}(\zeta_q)/\mathbb{Q}).
\]
We denote by $V_{\xi_1,\xi_2,\xi_3,\mathbb{Q}}$ the $\mathbb{Q}$-variety obtained from $V_{\xi_1,\xi_2,\xi_3}$ (see Section~\ref{sect:GaloisConjugates}). The solution $(a,b,c,x,y,z)$ also gives rise to a $\overline{\mathbb{Q}}$-point of $V_{\xi_1,\xi_2,\xi_3,\mathbb{Q}}$.
The following is proved by a direct computation.
\begin{proposition}\label{prop:RuleOutDivisors3Plus2}
Let $q \in \{3^3, 5^2, 7^2, 7, 11, 13, 3^2 \cdot 5, 3 \cdot 5\}$. Let $e_1, e_2, e_3 \in [0,q-1]$ be integers such that $(e_1,e_2,e_3,q)=1$ and let $\xi_i=\zeta_q^{e_i}$. Define a set $S \subseteq \{x'-1,y'-1,z'-1,x'-y'\}$ as follows:
\begin{enumerate}
\item $x'-1 \in S$ if and only if $e_1=0$ (respectively, $y'-1 \in S$ if and only if $e_2=0$, and $z'-1 \in S$ if and only if $e_3=0$);
\item $x'-y' \in S$ if and only if $e_1=e_2$.
\end{enumerate}
 
The intersection of $V_{\xi_1,\xi_2,\xi_3,\mathbb{Q}}$ with $\{ abc(a-b)(b-c)(c-a) \prod_{s \in S} s \neq 0 \}$ is empty.
\end{proposition}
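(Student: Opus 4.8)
The plan is to reduce Proposition~\ref{prop:RuleOutDivisors3Plus2} to a finite sequence of explicit computations over $\mathbb{Q}$, one for each admissible triple $(\xi_1,\xi_2,\xi_3)$ and each of the eight values of $q$. For a fixed $q$ and a fixed choice of exponents $(e_1,e_2,e_3) \in [0,q-1]^3$ with $(e_1,e_2,e_3,q)=1$, the variety $V_{\xi_1,\xi_2,\xi_3}$ is cut out in $\mathbb{A}^6$ by the $\varphi(q)$ polynomials $P_{\circled{\tiny{3}}+\circled{\tiny{2}}}(a,b,c,\sigma(\xi_1)\xp,\sigma(\xi_2)\yp,\sigma(\xi_3)\zp)$ as $\sigma$ ranges over $\Gal(\mathbb{Q}(\zeta_q)/\mathbb{Q})$; since this ideal is Galois-stable, it descends to an ideal in $\mathbb{Q}[a,b,c,\xp,\yp,\zp]$, and $V_{\xi_1,\xi_2,\xi_3,\mathbb{Q}}$ is the associated $\mathbb{Q}$-variety. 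What must be shown is that, after removing the locus where $abc(a-b)(b-c)(c-a)$ vanishes and the extra factors indexed by $S$, the remaining scheme is empty. In practice one forms, for each triple, the ideal generated by the $\varphi(q)$ equations together with the equation $t \cdot abc(a-b)(b-c)(c-a)\prod_{s\in S}s = 1$ in the polynomial ring with the auxiliary Rabinowitsch variable $t$ adjoined, and checks (via a Gröbner basis computation, e.g.\ in \textsc{Magma}) that this ideal is the unit ideal. If it is, the corresponding variety has no $\mathbb{Q}$-points, hence no $\overline{\mathbb{Q}}$-points with the prescribed non-vanishing conditions, which is exactly the claim.

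The key organizational steps are as follows. First, for each of the eight values $q \in \{3^3, 5^2, 7^2, 7, 11, 13, 3^2\cdot 5, 3\cdot 5\}$, enumerate the finitely many triples $(e_1,e_2,e_3)$ modulo the natural reductions: one may quotient by the diagonal action of $(\mathbb{Z}/q\mathbb{Z})^\times$ (which just relabels the primitive root $\zeta_q$), and one only needs those triples for which $\mathrm{lcm}$ of the orders of the $\xi_i$ equals $q$, i.e.\ $(e_1,e_2,e_3,q)=1$. Second, for each such triple, compute the defining equations of $V_{\xi_1,\xi_2,\xi_3}$ over $\mathbb{Q}(\zeta_q)$, descend to $\mathbb{Q}$ by taking (say) a $\mathbb{Q}$-basis of $\mathbb{Q}(\zeta_q)$ and splitting each equation into its components, or more simply by observing that the Galois orbit of the single equation $P_{\circled{\tiny{3}}+\circled{\tiny{2}}}(a,b,c,\xi_1\xp,\xi_2\yp,\xi_3\zp)=0$ already generates a Galois-stable ideal whose intersection with $\mathbb{Q}[a,b,c,\xp,\yp,\zp]$ can be computed by elimination. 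Third, adjoin the Rabinowitsch inverse of $abc(a-b)(b-c)(c-a)\prod_{s\in S}s$ and verify the ideal is trivial. The determination of $S$ is purely combinatorial and follows the recipe in the statement: $x'-1\in S$ precisely when $e_1=0$ (so that $x$ has order prime to $q$ and the condition $x\neq 1$ translates to $x'\neq 1$), and $x'-y'\in S$ precisely when $e_1=e_2$ (so that the condition $x\neq y$ translates to $x'\neq y'$); this ensures we only impose a non-vanishing condition when it is genuinely forced by~\eqref{eq:Conditions3plus2} on the original variables.

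The main obstacle I expect is not conceptual but computational: the varieties $V_{\xi_1,\xi_2,\xi_3}$ live in $\mathbb{A}^6$ and, for the larger values of $q$ such as $q=3^3$ (where $\varphi(q)=18$) or $q=7^2$ (where $\varphi(q)=42$), one is dealing with systems of many equations in six variables over a cyclotomic field of sizeable degree, and a naive Gröbner basis computation over $\mathbb{Q}(\zeta_q)$ could be slow. The mitigating points are that $P_{\circled{\tiny{3}}+\circled{\tiny{2}}}$ is only quadratic in each of $x,y,z$ and homogeneous of low degree in $a,b,c$, so after substitution each equation is of modest degree; moreover, one can work over the residue field $\mathbb{Q}(\zeta_q)$ directly and only descend at the end, or even prove emptiness by exhibiting emptiness modulo a well-chosen prime (which, being a sufficient condition for emptiness over $\mathbb{Q}$, suffices here). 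In fact many triples will be eliminated almost immediately because two of the $\varphi(q)$ Galois conjugates of the single equation already have no common solution with the non-degeneracy constraints; only a handful of triples per $q$ should require the full computation. Since there are only finitely many $(q, e_1, e_2, e_3)$ to handle and the individual systems are small, the proof is completed by running the computation and recording that in every case the relevant ideal is the unit ideal; this is the ``direct computation'' referred to in the statement, and the accompanying source code carries it out.
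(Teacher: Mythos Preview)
Your proposal is correct and matches the paper's own proof, which simply reads ``The following is proved by a direct computation'' and defers to the accompanying \textsc{Magma} code. Your description of the computation---enumerating admissible triples $(e_1,e_2,e_3)$ up to the diagonal $(\mathbb{Z}/q\mathbb{Z})^\times$-action, forming the Galois-stable ideal, imposing the non-degeneracy conditions via a Rabinowitsch variable, and verifying triviality by Gr\"obner bases---is exactly the intended procedure.
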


\begin{corollary}\label{cor: sharp bound case 2+3}
With the same hypotheses as in Corollary~\ref{cor:Bound32Step1}, the common order $n$ of $x, y, z$ divides $2^4 \cdot 3^2 \cdot 5$ and is not divisible by $3 \cdot 5$.
\end{corollary}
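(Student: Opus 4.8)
The plan is to start from the divisibility $n \mid 2^4 \cdot 3^3 \cdot 5^2 \cdot 7^2 \cdot 11 \cdot 13$ with $2^3 \cdot 3^2 \nmid n$ provided by Corollary \ref{cor:Bound32Step1}, and to rule out each of the remaining ``large'' prime powers ($13$, $11$, $7^2$, $7$, $5^2$, $3^3$) as well as the composite $3 \cdot 5$, using Proposition \ref{prop:RuleOutDivisors3Plus2}. Concretely, suppose for contradiction that $q \mid n$ exactly for one of $q \in \{3^3,5^2,7^2,7,11,13,3^2\cdot 5,3\cdot 5\}$; write $x=\xi_1 x'$, $y=\xi_2 y'$, $z=\xi_3 z'$ with $\xi_i=\zeta_q^{e_i}$ and $(e_1,e_2,e_3,q)=1$ as in Section \ref{sect:GaloisConjugates}, so that $(a,b,c,x',y',z')$ is a $\overline{\mathbb{Q}}$-point of the $\mathbb{Q}$-variety $V_{\xi_1,\xi_2,\xi_3,\mathbb{Q}}$. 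The non-degeneracy conditions \eqref{eq:Conditions3plus2} force $a,b,c$ distinct and nonzero, hence $abc(a-b)(b-c)(c-a)\neq 0$. Moreover, for each $s$ in the set $S$ defined in Proposition \ref{prop:RuleOutDivisors3Plus2}, the corresponding factor is nonzero: if $e_1=0$ then $x=x'$ has order prime to $q$, but $x$ also has order dividing $N=n/q$ times $q$... more simply, $x'-1\neq 0$ because $x\neq 1$ and, when $e_1=0$, $x=x'$; the cases $y'-1$, $z'-1$ are identical using $y,z\neq 1$; and when $e_1=e_2$ one has $x/y=x'/y'$, while $x\neq y$ gives $x'-y'\neq 0$. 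Thus $(a,b,c,x',y',z')$ lies in the complement $\{abc(a-b)(b-c)(c-a)\prod_{s\in S}s\neq 0\}$, contradicting Proposition \ref{prop:RuleOutDivisors3Plus2}.

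It follows that none of the prime powers $13$, $11$, $7^2$, $7$, $5^2$, $3^3$, nor $3\cdot 5$ (nor a fortiori $3^2\cdot 5$) can divide $n$ exactly. Combined with Corollary \ref{cor:Bound32Step1}, exact divisibility by $13$, $11$, $7$ (in either the $7$ or $7^2$ form) is excluded, so $13,11,7 \nmid n$; exact divisibility by $5^2$ is excluded, so $5^2 \nmid n$, i.e. $v_5(n)\leq 1$; and $v_3(n)\leq 2$ already (from $2^3\cdot 3^2\nmid n$ we only get $v_2(n)\leq 2$ or $v_3(n)\leq 1$, so here I should be careful — see below). Putting the surviving constraints together, $n$ divides $2^4\cdot 3^2\cdot 5$. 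For the last assertion, if $3\cdot 5 \mid n$ then, since $v_5(n)\leq 1$, the factor $5$ divides $n$ exactly, and we may take $q=3\cdot 5$ if also $v_3(n)=1$, or $q=3^2\cdot 5$ if $v_3(n)=2$; in either case Proposition \ref{prop:RuleOutDivisors3Plus2} (which treats both $q=3\cdot5$ and $q=3^2\cdot 5$) applies with the argument of the previous paragraph and yields a contradiction. Hence $3\cdot 5 \nmid n$.

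The one genuinely delicate bookkeeping point — and the part I expect to require the most care — is the interplay between the two constraints ``$n \mid 2^4\cdot 3^3\cdot 5^2\cdot 7^2\cdot 11\cdot 13$'' and ``$2^3\cdot 3^2 \nmid n$'' from Corollary \ref{cor:Bound32Step1}, versus the exact-divisibility hypotheses needed to invoke Proposition \ref{prop:RuleOutDivisors3Plus2}. The Galois-conjugates argument only rules out $q$ dividing $n$ \emph{exactly} (i.e. $q \mid n$ with $(n/q,q)=1$), so to exclude, say, $3^3 \mid n$ one uses $q=3^3$ and concludes $v_3(n)\leq 2$; to then pass from $v_3(n)\leq 2$ (and the constraint $2^3\cdot 3^2\nmid n$, which allows $v_3(n)=2$ only if $v_2(n)\leq 2$) to the clean statement requires noting that $v_3(n)\leq 2$ holds unconditionally from Corollary \ref{cor:Bound32Step1} anyway, so no new information on $v_2$ is gained and $v_2(n)\leq 4$ stands. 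Thus the only real content beyond Corollary \ref{cor:Bound32Step1} is killing the primes $7,11,13$ entirely, lowering $v_5$ from $\leq 2$ to $\leq 1$, lowering $v_3$ from $\leq 3$ to $\leq 2$ (already known), and forbidding the simultaneous presence of $3$ and $5$; each of these is a direct consequence of Proposition \ref{prop:RuleOutDivisors3Plus2} together with the non-degeneracy conditions, as spelled out above.
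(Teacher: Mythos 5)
Your proof is correct and follows essentially the same route as the paper: for each $q$ in the list of Proposition~\ref{prop:RuleOutDivisors3Plus2} one shows $q$ cannot divide $n$ exactly (the non-degeneracy conditions \eqref{eq:Conditions3plus2} ruling out every alternative offered by that proposition), and then one does the valuation bookkeeping, including the two-case split $q=3\cdot 5$ versus $q=3^2\cdot 5$ for the final assertion, exactly as in the paper. The only blemish is your parenthetical claim that $v_3(n)\leq 2$ is ``already known'' from Corollary~\ref{cor:Bound32Step1} (that corollary only gives $v_3(n)\leq 3$), but since you also derive $v_3(n)\leq 2$ from the $q=3^3$ case of the proposition, the argument is unaffected.
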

\begin{proof}
By Corollary~\ref{cor:Bound32Step1} we already know that $n \mid 2^4 \cdot 3^3 \cdot 5^2 \cdot 7^2 \cdot 11 \cdot 13$ and that $2^3 \cdot 3^2 \nmid n$. Suppose now that $13 \mid n$: the previous divisibility implies in particular $(13,n/13)=1$, and therefore $(a,b,c,x,y,z)$ gives rise to a $\overline{\Q}$-point on $V_{\xi_1,\xi_2,\xi_3,\mathbb{Q}}$ for suitable 13-th roots of unity $\xi_1,\xi_2,\xi_3$. Proposition~\ref{prop:RuleOutDivisors3Plus2} shows that one of the following holds:
\begin{enumerate}
\item $abc(a-b)(b-c)(c-a)=0$, which contradicts the non-degeneracy conditions \eqref{eq:Conditions3plus2};
\item $e_1=0$ and $x'=1$, which implies $x=\zeta_q^{e_1} x'=1$, again contradicting \eqref{eq:Conditions3plus2}. An identical argument applies in the cases $y'=1$ and $z'=1$.
\item $e_1=e_2$ and $x'=y'$, which implies $x=\xi_1 x' = \xi_2 y' = y$, once again contradicting \eqref{eq:Conditions3plus2}.
\end{enumerate}
In all cases we obtain a contradiction, which shows that $13 \nmid n$. A similar argument also shows that $7^2 \nmid n, 11 \nmid n, 5^2 \nmid n$, and $3^3 \nmid n$. Furthermore, once we know that $3^3 \nmid n$, we can repeat the same argument again with $3^2 \cdot 5$ (since at this point we know that $3^2 \cdot 5 \mid n \Rightarrow (3^2 \cdot 5,n/(3^2 \cdot 5))=1$), and obtain that $45  \nmid n$. In a similar way we prove that $7$ and $15$ do not divide $n$, which concludes the proof.
\end{proof}

The bound on the common order of $(x,y,z)$ is now small enough that we can easily enumerate all solutions of Equation \eqref{eq:3plus2} outside of the unbounded families (recall that these correspond to rectangular spaces), and for each of them describe the rational angles contained in the corresponding space:%
\begin{theorem}\label{thm:3plus2}
Let $V$ be a space of type \circled{3}+\circled{2}. Then one of the following holds:
\begin{enumerate}
\item $V$ is a rectangular space;
\item $V$ is homothetic to $\langle 1, \zeta_8^2+\zeta_8+1 \rangle_{\mathbb{Q}}$; it is of exact type 2\circled{3}.
Setting $\tau=\zeta_8^2+\zeta_8+1$, the two rational triples are $(1,\tau,\tau-1)$ and $(\tau+1,\tau-1/2, \tau - 2)$; we have
\begin{align*}
 \arg(\tau)&=\frac{\pi}{4} , & \arg(\tau-1)&=\frac{3}{8}\pi, &
 \arg\left(\frac{\tau-1/2}{\tau+1}\right)&=\frac{\pi}{8} , & \arg\left(\frac{\tau-2}{\tau+1}\right)&=\frac{3}{8}\pi. 
\end{align*}

\item $V$ is homothetic to $  \langle \zeta_5^2 + 2\zeta_5+2,1 \rangle_{\mathbb{Q}}$; it is of exact type 2\circled{3}.
Setting $\tau=\zeta_5^2+2\zeta_5+2$, the two rational triples are $(1,\tau,\tau-1)$ and $(\tau-1/3,\tau-4,\tau-5/4)$; we have
\begin{align*}
 \arg(\tau)&=\frac{3}{10}\pi , & \arg(\tau-1)&=\frac{2}{5}\pi, &
 \arg\left(\frac{\tau-4}{\tau-1/3}\right)&=\frac{2}{5}\pi , & \arg\left(\frac{\tau-5/4}{\tau-1/3}\right)&=\frac{\pi}{10}. 
\end{align*}

\item $V $ is homothetic to $ \langle 2\zeta_5^3 +\zeta_5 + 2, 1 \rangle_{\mathbb{Q}}$;  it is of exact type 2\circled{3}.
Setting $\tau=2\zeta_5^3+\zeta_5+2$, the two rational triples are $(1,\tau,\tau-1)$ and $(\tau-1/3,\tau-4,\tau-5/4)$; we have
\begin{align*}
 \arg(\tau)&=-\frac{\pi}{10} , & \arg(\tau-1)&=-\frac{4}{5}\pi, &
 \arg\left(\frac{\tau-4}{\tau-1/3}\right)&=-\frac{4}{5}\pi , & \arg\left(\frac{\tau-5/4}{\tau-1/3}\right)&=-\frac{7}{10}\pi. 
\end{align*}

\item $V $ is homothetic to $ \langle 2\zeta_{12}^3+2\zeta_{12}^2-\zeta_{12}-1, 1 \rangle_{\mathbb{Q}}$;  it is of exact type 2\circled{3}.
Setting $\tau=2\zeta_{12}^3+2\zeta_{12}^2-\zeta_{12}-1$, the two rational triples are $(1,\tau,\tau-1)$ and $(\tau+3,\tau-2,\tau-3/4)$; we have
\begin{align*}
 \arg(\tau)&=\frac{7}{12}\pi , & \arg(\tau-1)&=\frac{2}{3}\pi, &
 \arg\left(\frac{\tau-2}{\tau+3}\right)&=\frac{5}{12}\pi , & \arg\left(\frac{\tau-3/4}{\tau+3}\right)&=\frac{\pi}{3}. 
\end{align*}

\end{enumerate}
The spaces in parts (2)--(5) are illustrated in Section~\ref{sec:figure:3+3}.
\end{theorem}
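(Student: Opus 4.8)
The plan is to combine the explicit bound on the common order of $x,y,z$ from Corollary~\ref{cor: sharp bound case 2+3} with an exhaustive computer search, in the spirit of the ``Step 3'' outlined in the introduction. By Corollary~\ref{cor: sharp bound case 2+3}, any solution $(a,b,c,x,y,z)$ of \eqref{eq:3plus2} not belonging to an unbounded family has common order $n$ dividing $2^4\cdot 3^2\cdot 5$ with $15\nmid n$; since a space $V$ of type \circled{3}+\circled{2} which is not rectangular gives (after discarding the unbounded families, which by \S\ref{sec:Rettangolari} correspond exactly to rectangular spaces) such a solution, it suffices to enumerate all triples $(x,y,z)\in\roots^3$ whose common order divides $2^4\cdot 3^2\cdot 5$ and is not a multiple of $15$. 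For each such triple one substitutes the values into \eqref{eq:3plus2}, obtaining a linear (in fact homogeneous of degree $2$) system of equations in the coefficients $a,b,c$ over the cyclotomic field $\Q(x,y,z)$; imposing that the real and imaginary parts vanish gives a $\Q$-variety in $\pro_2$, whose rational points satisfying the non-degeneracy conditions \eqref{eq:Conditions3plus2} are then computed directly.

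Concretely, the key steps are as follows. First I would list the divisors $n$ of $2^4\cdot 3^2\cdot 5$ with $15\nmid n$, and for each $n$ loop over all triples $(x,y,z)$ of roots of unity of common order exactly $n$, using the symmetries \eqref{simmetrie:3plus2} and the Galois action to cut down the list to orbit representatives. Second, for each triple I would solve the resulting system for $(a:b:c)\in\pro_2(\Q)$: since $P_{\circled{\tiny 3}+\circled{\tiny 2}}$ is homogeneous of degree $2$ in $(a,b,c)$ with coefficients in $\Q(\zeta_n)$, writing out the vanishing of each Galois conjugate of the equation yields finitely many quadratic forms over $\Q$, and their common zero locus in $\pro_2(\Q)$ is a finite (possibly empty) set that MAGMA computes routinely. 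Third, I would discard the solutions violating \eqref{eq:Conditions3plus2} (i.e.\ with some $x,y,z$ equal to $1$, with $x=y$, or with $a,b,c$ not distinct and nonzero) and the ones leading to rectangular spaces. Finally, for each surviving solution I would compute $\tau=ax\frac{y-1}{x-y}$, recognise the homothety class of $\generatedQ{1,\tau}$, and read off the rational triple $(1,\tau,\tau+a)$ and the rational angle $(\tau+b,\tau+c)$; running over the full list of rational angles of the resulting space (again a finite computation, since its $\tau$ has bounded degree) one checks that the exact type is $2\circled{3}$ in cases (2)--(5), and computes the displayed arguments.

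I would also need to argue that the four spaces listed are genuinely pairwise inequivalent and that no further rational angles appear: the former follows by comparing the $\tau$'s (e.g.\ their minimal polynomials or the list of amplitudes of rational angles), and the latter because, the order of $\tau$ being bounded, the set of rational angles of $\generatedQ{1,\tau}$ is finite and can be exhaustively enumerated via \eqref{eqn:1angle:prima}. One should also double-check that each solution found does correspond to an honest geometric configuration — this is automatic here since the formula $\tau=ax\frac{y-1}{x-y}$ always produces a valid $\tau$ once $x\neq y$, unlike the $ab=cd$ subtlety in the $3\circled{2}$ case.

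The main obstacle is not conceptual but computational bookkeeping: the naive search space (all triples of roots of unity of order dividing $720$) is large, so one must organise the enumeration efficiently — exploiting the group $G$ of symmetries \eqref{simmetrie:3plus2}, the Galois action, and the reductions already established (Corollary~\ref{cor: sharp bound case 2+3} together with the Galois-conjugates argument of \S\ref{sect:GaloisConjugates}) — so that only manageably many cyclotomic varieties need to be analysed. A secondary point requiring care is to correctly separate the solutions belonging to unbounded families (the rectangular ones) from the sporadic ones, so that the statement ``one of (1)--(5) holds'' is exhaustive; this is handled by the results of Section~\ref{sec:Rettangolari} identifying unbounded families with rectangular spaces.
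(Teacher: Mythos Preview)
Your proposal is correct and matches the paper's approach: after Corollary~\ref{cor: sharp bound case 2+3} bounds the common order, the paper simply states that one can ``easily enumerate all solutions of Equation~\eqref{eq:3plus2} outside of the unbounded families'' by computer and then describe the rational angles in each resulting space, exactly as you outline. One simplification worth noting (made in the Remark preceding Proposition~\ref{prop:LongSubsum3plus2}) is that the space $\langle 1,\tau\rangle_\Q$ depends only on $(x,y)$ and not on $(a,b,c,z)$, so the finite list of candidate non-rectangular spaces is already determined before one solves for $(a:b:c)$; this sidesteps your implicit assumption that the zero locus in $\pro_2(\Q)$ is always finite.
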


\section{Spaces with \texorpdfstring{$ab=cd$}{ab=cd}}\label{sect:abcd}
Under the assumption that $ab=cd$, equation \eqref{eqn:main} splits, and after removing multiple factors and trivial solutions one is left with 
\begin{equation}\label{eq:abcd}
f_{ab=cd} := (d-b)xyz + (b-c) xy + (a-d)xz + (c-a)yz + (c-a)x  + (a-d)y + (b-c)z + (d-b) = 0.
\end{equation}
As explained in Section~\ref{subsec:corresp}, in this case it is not possible to recover a single $\tau$ that generates the corresponding space. A solution to $\eqref{eq:abcd}$ need not correspond to a space, and when it does, $\tau$ has (in general) degree two over $\Q(x,y,z)$. One finds two (in principle non-homothetic) spaces corresponding to the same algebraic solution.
This peculiarity justifies a separate study of this case.

As in Proposition~\ref{prop:LongSubsum3plus2} one shows:
\begin{proposition}\label{prop:LongSubsumabcd}
With notation as in Proposition~\ref{prop:Subsums},
\begin{enumerate}
\item if $\ell \geq 5$ then $M=\mathbb{Z}^3$, $e=1$, and the (squarefree) common order $n$ of $x, y, z$ satisfies $\sum_{p \mid n} (p-2) \leq 8-2$;
\item in all cases, if $\mathbb{Z}^3/M$ is finite, then its exponent $e$ satisfies $e \leq 4$.
\end{enumerate}
\end{proposition}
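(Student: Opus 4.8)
The plan is to mirror the proof of Proposition~\ref{prop:LongSubsum3plus2} exactly, adapting it to the $8$ summands of Equation~\eqref{eq:abcd}. The key observation is that $f_{ab=cd}$ has monomial support $\{xyz, xy, xz, yz, x, y, z, 1\}$, which is precisely the set $\{0,1\}^3 \subseteq \mathbb{Z}^3$ of vertices of the unit cube. So here the ambient lattice $\mathbb{Z}^3$ is generated by the differences of the exponent vectors already when we use only a few of them, and we expect the relevant modules $M_j$ to fill up $\mathbb{Z}^3$ quite easily.

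For part~(1), I would list all $\binom{8}{5} = 56$ subsets of size $5$ of $\{0,1\}^3$, regard each as a candidate $I_j$ realising the maximal length $\ell$, form the module $M_j = \langle \underline{i} - \underline{i}_j : \underline{i} \in I_j \rangle_{\mathbb{Z}}$, and check (by a Smith normal form / determinant computation) that $M_j = \mathbb{Z}^3$ in every case. As in Proposition~\ref{prop:LongSubsum3plus2}, the monotonicity $I_j \subseteq I_j' \Rightarrow M_j \subseteq M_j'$ means it suffices to do this for subsets of size exactly $5$, and the conclusion then extends to all $\ell \geq 5$. Once $M = \mathbb{Z}^3$ we have $e = 1$, and applying \cite{ConwayJones}*{Theorem~5} to the vanishing subsum indexed by the subset of maximal cardinality $\ell \leq 8$ in the maximal decomposition gives that $n$ is squarefree with $\sum_{p \mid n}(p-2) \leq \ell - 2 \leq 8 - 2$.

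For part~(2), following the recipe in Proposition~\ref{prop:Subsums}, I would set $V \subseteq \mathbb{Z}^3$ to be the $8$ exponent vectors of $f_{ab=cd}$ and $W = \{v_1 - v_2 : v_1, v_2 \in V\}$; any $M$ arising from a maximal decomposition is generated by vectors in $W$, and when $M$ has finite index we may extract three vectors of the form $\underline{i} - \underline{i}_j$ generating a finite-index subgroup (basic linear algebra after tensoring with $\mathbb{Q}$). So I would loop over all $3$-element subsets $\{w_1,w_2,w_3\}$ of $W$, test whether $\mathbb{Z}^3/(\mathbb{Z}w_1 + \mathbb{Z}w_2 + \mathbb{Z}w_3)$ is finite, and when it is, compute its exponent via Smith normal form. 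Since $M$ contains one of these rank-$3$ submodules whenever it has finite index, the maximum of the exponents found is an upper bound for the exponent of $\mathbb{Z}^3/M$ in all the finite cases; the claim is that this maximum is $4$. This is a finite check over $\binom{|W|}{3}$ triples and is essentially routine.

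The only mild subtlety — and the analogue of the ``main obstacle'' — is purely bookkeeping: verifying that the exponent bound is genuinely $4$ rather than something larger (the cube $\{0,1\}^3$ does admit rank-$3$ sublattices of index $> 1$ spanned by three edge-difference vectors, e.g.\ $(1,1,0), (0,1,1), (1,0,1)$ which span an index-$2$ sublattice, or longer diagonals giving index up to $4$), so one must confirm computationally that no triple of vectors in $W$ produces a quotient of exponent $5$ or more. Given the small size of $W$ this is immediate in practice, and no genuine difficulty arises; the statement follows exactly as in Proposition~\ref{prop:LongSubsum3plus2}.
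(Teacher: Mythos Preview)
Your proposal is correct and follows exactly the paper's approach: the paper simply states ``As in Proposition~\ref{prop:LongSubsum3plus2} one shows'' before Proposition~\ref{prop:LongSubsumabcd}, and your write-up is precisely that argument specialised to the $8$ monomials $\{0,1\}^3$ of $f_{ab=cd}$. The two finite checks (the $56$ five-subsets for part~(1) and the triples from $W=\{-1,0,1\}^3$ for part~(2)) are exactly what is intended, and your observation that $|\det|\le 4$ for $3\times 3$ matrices with entries in $\{-1,0,1\}$ gives a clean a priori reason why the exponent bound comes out as $4$.
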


In this case, the argument of Section~\ref{sect:GaloisConjugates} is not necessary: we immediately obtain the following corollary, which will be enough for our purposes.

\begin{corollary}
Let $(a,b,c,d,x,y,z)$ be a solution to Equation \eqref{eq:abcd} not belonging to an unbounded family and let $n$ be the least common multiple of the orders of $x, y, z$. Then $n$ divides one among $\{2 \cdot 3 \cdot 5, 2 \cdot 3 \cdot 7, 2^3 \cdot 3, 2 \cdot 3^2 \}$.
\end{corollary}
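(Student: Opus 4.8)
The plan is to combine the two structural results already in hand — Proposition~\ref{prop:LongSubsumabcd} and Proposition~\ref{prop:Subsums} — exactly as was done for Corollary~\ref{cor:Bound32Step1}, but now with the numerical inputs coming from the eight summands of Equation~\eqref{eq:abcd}. Fix a solution $(a,b,c,d,x,y,z)$ not belonging to an unbounded family, let $\{I_j\}$ be a maximally refined decomposition of the eight monomials into vanishing subsums, let $M=\sum_j M_j$ be the associated submodule of $\mathbb{Z}^3$, let $\ell=\max_j |I_j|$, and let $e$ be the exponent of $\mathbb{Z}^3/M$. Since the solution does not belong to an unbounded family, case (1) of Proposition~\ref{prop:Subsums} is excluded, so $\mathbb{Z}^3/M$ is finite and case (2) applies.

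First I would dispose of the ``long subsum'' case. If $\ell \geq 5$, then part (1) of Proposition~\ref{prop:LongSubsumabcd} gives that $n$ is squarefree with $\sum_{p\mid n}(p-2)\leq 6$; enumerating the squarefree integers satisfying this inequality, the prime $2$ contributes $0$, the prime $3$ contributes $1$, the prime $5$ contributes $3$, the prime $7$ contributes $5$, and no prime $\geq 11$ can occur (it alone would contribute $\geq 9$). Hence $n$ divides $2\cdot 3\cdot 5$ or $2\cdot 3\cdot 7$ (one cannot have both $5\mid n$ and $7\mid n$, as that already gives $3+5=8>6$, and one cannot have $3\cdot 5\cdot 7$ for the same reason), so in this case $n$ divides one of $\{2\cdot 3\cdot 5,\ 2\cdot 3\cdot 7\}$.

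Next I would treat the complementary case $\ell \leq 4$. Then Proposition~\ref{prop:Subsums}(2) gives that $n$ divides $e\cdot\prod_{p\leq \ell}p$, which divides $e\cdot(2\cdot 3)=6e$, and part (2) of Proposition~\ref{prop:LongSubsumabcd} bounds $e\leq 4$. So $n\mid 6e$ with $e\in\{1,2,3,4\}$, i.e.\ $n$ divides one of $6,12,18,24$. Of these, $6\mid 2\cdot 3\cdot 5$, $12=2^2\cdot 3$ divides $2^3\cdot 3$, $18=2\cdot 3^2$, and $24=2^3\cdot 3$. Thus in this case $n$ divides one of $\{2^3\cdot 3,\ 2\cdot 3^2\}$ (absorbing $6$ and $12$ into $2^3\cdot 3$). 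Combining the two cases, $n$ divides one among $\{2\cdot 3\cdot 5,\ 2\cdot 3\cdot 7,\ 2^3\cdot 3,\ 2\cdot 3^2\}$, which is the claim.

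The only genuinely non-routine ingredient is Proposition~\ref{prop:LongSubsumabcd} itself — in particular verifying that $e\leq 4$ in all finite-quotient cases and that $M=\mathbb{Z}^3$ whenever $\ell\geq 5$ — but that is asserted to hold ``as in Proposition~\ref{prop:LongSubsum3plus2}'', i.e.\ by the same finite computation (looping over subsets of the $8$ monomials of Equation~\eqref{eq:abcd}, rather than of the $14$ of Equation~\eqref{eq:3plus2}), and we are allowed to invoke it. Given that input, the corollary is a short arithmetic bookkeeping argument with no real obstacle; the only point requiring a moment's care is checking that the Conway--Jones inequality $\sum_{p\mid n}(p-2)\leq 6$ genuinely forbids $n$ divisible by $3\cdot 5\cdot 7$ or by any prime exceeding $7$, and that the four listed moduli indeed cover all the resulting possibilities.
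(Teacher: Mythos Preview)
Your proof is correct and follows essentially the same route as the paper: split according to whether $\ell\geq 5$ or $\ell\leq 4$, in the first case use Proposition~\ref{prop:LongSubsumabcd}(1) and the Conway--Jones inequality to force $n\mid 2\cdot 3\cdot 5$ or $n\mid 2\cdot 3\cdot 7$, and in the second case apply Proposition~\ref{prop:Subsums}(2) with $\prod_{p\leq 4}p=6$ and $e\leq 4$. The paper's proof is simply a terser version of what you wrote.
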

\begin{proof}
In case (1) of Proposition~\ref{prop:LongSubsumabcd} one checks immediately that $m$ divides $2 \cdot 3 \cdot 5$ or $2 \cdot 3 \cdot 7$. In case (2), we can assume $\ell \leq 4$, and Proposition~\ref{prop:Subsums} (2) yields that the common order of $x, y, z$ divides $6e$, where $e \in \{1,2,3,4\}$.
\end{proof}

The bound provided by the previous corollary is sharp enough that we can test all possible cases.
\begin{theorem}\label{thm:abcd}
Let $(a,b,c,d,x,y,z)$ be a solution to \eqref{eq:abcd}. Then one of the following holds:
\begin{enumerate}
\item the solution belongs to an unbounded family;
\item two of the variables $a, b, c, d$ are equal to zero;
\item $x=y=z=-1$;
\item $n=10$, and the solution is \se to $\left( 3, \frac{1}{2}, \frac{3}{2}, 1, \zeta_{10}, \zeta_{10}^2, \zeta_{10}^3 \right)$;
\item $n=12$, and the solution is \se to one of the two solutions 
\[
\left( 2, \frac{2}{3}, \frac{4}{3}, 1, \zeta_{12}, \zeta_{12}^3, \zeta_{12}^5 \right), \quad \left( \frac{4}{3}, \frac{2}{3}, \frac{8}{9}, 1, \zeta_{12}, \zeta_{12}^4, \zeta_{12}^5 \right);
\]

\item $n=6$, and the solution is \se to one of the following:
\begin{enumerate}
\item $\displaystyle \left(\frac{3cd}{c+2d} , \frac{c+2d}{3} , c, d, \zeta_6, \zeta_6, \zeta_6^2 \right)$ for some $(c,d) \in \mathbb{Q}^2$ with $c+2d \neq 0$;
\item $\displaystyle \left( 2d, \frac{1}{2}c, c,d ,\zeta_6, \zeta_6^2, \zeta_6^2\right)$ for some $(c,d) \in \mathbb{Q}^2$;
\item $\displaystyle \left( \frac{3 d^2-4 bd}{2d-3b},b,\frac{3 b d-4 b^2}{2d-3b},d,\zeta_6, \zeta_6^2, \zeta_6^3 \right)$ for some $(b,d) \in \mathbb{Q}^2$ with $2d-3b \neq 0$.
\end{enumerate}
\end{enumerate}
\end{theorem}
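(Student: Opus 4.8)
The plan is to combine the finiteness bound from the corollary preceding the statement with an exhaustive but organised computer search. By the corollary, any solution not belonging to an unbounded family has common order $n$ dividing one of $2\cdot 3\cdot 5$, $2\cdot 3\cdot 7$, $2^3\cdot 3$, or $2\cdot 3^2$; so $n \in \{1,2,3,5,6,7,10,12,14,15,21,24,18,\ldots\}$, a finite explicit list of divisors of these four numbers. For each such $n$ we enumerate all triples $(x,y,z)$ of $n$-th roots of unity (working up to the action of $G\times\Gal(\Q(\zeta_n)/\Q)$ to keep the count manageable), substitute into Equation \eqref{eq:abcd}, and examine the resulting linear condition on $(a,b,c,d)$.

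The key structural observation making this tractable is that Equation \eqref{eq:abcd} is \emph{linear} in $(a,b,c,d)$ once $(x,y,z)$ is fixed: collecting coefficients of $a,b,c,d$ we get
\begin{equation*}
a\bigl(xz-yz+x-y\bigr) + b\bigl(-xyz+xy-z+1\bigr) + c\bigl(xy-yz+x-z\bigr) + d\bigl(xyz-xz-y+1\bigr) = 0.
\end{equation*}
Actually it is cleaner to note the coefficient of $a$ is $(x-y)(z+1)$, of $b$ is $(1-z)(xy+1)$ wait — one should just read off that each coefficient factors, and in particular vanishes precisely when $x,y,z$ take certain special values (such as $z=-1$, $x=y$, etc.). For a fixed $(x,y,z)$ the solution set in $(a:b:c:d)$ is a linear subspace of $\pro_3$; we must then intersect with the open locus where $a,b,c,d$ are distinct and nonzero and where $ab=cd$ (the defining hypothesis of this section) and check whether any $\Q$-point survives. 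Three things can happen: (i) the subspace is a point or smaller and forces a degeneracy; (ii) it has the right dimension and meets the good locus in a genuine rational family — these are exactly the cases (6)(a)--(c) at $n=6$; (iii) it meets the good locus in finitely many points — these give the sporadic solutions in cases (4) and (5). Throughout, one uses the symmetries \eqref{simmetrie} (the group $G$ together with $\Gal$) to collapse the many $(x,y,z)$-orbits to a handful of representatives, and records the final answer up to \se.

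I would organise the computation as follows. First, handle the trivial strata explicitly: if two of $a,b,c,d$ vanish we are in case (2); if $x=y=z=-1$ we are in case (3) — so assume from now on $(x,y,z)\neq(-1,-1,-1)$ and at most one of $a,b,c,d$ is zero, and in fact (by the non-degeneracy conventions in force for \eqref{eqn:main}, restated after the outline) all four are distinct and nonzero. Second, loop over the divisors $n$ of $2\cdot 3\cdot 5,\ 2\cdot 3\cdot 7,\ 2^3\cdot 3,\ 2\cdot 3^2$; for $n \le 4$ the only roots of unity available are $\pm 1, \pm\zeta_4$ and a short check (using that $x,y,z\neq 1$ and the argument of Lemma~\ref{lemma:2RightAnglesImplyCM}-type reasoning) shows nothing new arises. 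Third, for the remaining $n$ ($5,6,7,10,12,14,15,21,24,18$) run the MAGMA enumeration: for each Galois-orbit representative $(x,y,z)$, compute the linear system on $(a,b,c,d)$, parametrise its solution space, impose $ab=cd$ together with the non-degeneracy, and classify the output. The cases $n=5,7,14,15,21$ should produce nothing (the overdetermined-system phenomenon), $n=10$ gives the single sporadic solution in (4), $n=12$ the two in (5), and $n=6$ the three rational families in (6). Finally, match every surviving solution against the listed representatives up to \se.

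The main obstacle I anticipate is not conceptual but bookkeeping: correctly enumerating $(x,y,z)$ up to the combined $G\times\Gal$-action without missing cases, and — more delicately — disentangling, for each fixed $(x,y,z)$, whether the linear locus in $(a:b:c:d)$ meets the "good" open set in a positive-dimensional family, in isolated points, or not at all, and then checking that the families found are genuinely parametrised (i.e.\ that $ab=cd$ cuts the linear space in something still allowing $a,b,c,d$ distinct and nonzero for generic parameter). The condition $ab=cd$ is a quadric, so its intersection with a line or plane in $\pro_3$ can behave in several ways, and one must be careful that the "rational families" in (6) are not actually contained in the vanishing locus of some $(a-b)(b-c)\cdots$ for all parameter values. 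Verifying that the three families in (6)(a)--(c) do correspond to actual distinct spaces (or explaining when the two spaces attached to such a solution coincide) is the part that genuinely needs the geometric discussion of \S\ref{subsec:corresp} rather than pure linear algebra; this is deferred to the surrounding text of Section~\ref{sect:abcd}. All of this is a finite check, and the corollary guarantees it is exhaustive, so no infinite descent or curve-counting is needed here — in contrast to the genuinely hard Part 3.
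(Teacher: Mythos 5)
Your proposal matches the paper's proof, which is exactly this: the corollary bounds $n$ to a divisor of one of $2\cdot 3\cdot 5$, $2\cdot 3\cdot 7$, $2^3\cdot 3$, $2\cdot 3^2$, and the theorem is then verified by an exhaustive computer check over triples of roots of unity up to symmetry equivalence, the linearity of \eqref{eq:abcd} in $(a,b,c,d)$ (with factoring coefficients such as $(x-y)(z-1)$ for $a$, not $(x-y)(z+1)$ as in your display) making each case elementary. The only substantive slip is that your explicit list of remaining orders omits $n=8,9,30,42$, which must also be swept even though they contribute nothing; otherwise the approach and its organisation coincide with the paper's.
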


As mentioned, it is not necessarily the case that every solution to the algebraic equations actually leads to a valid geometric configuration. Concerning the geometric interpretation of the previous theorem, we have the following result: 
\begin{theorem}\label{thm:abcdGeometric}
\leavevmode
\begin{enumerate}
\setcounter{enumi}{0}
\item If the solution corresponds to a space, it is a rectangular space;
\item if the solution corresponds to a space, it is a superrectangular space;
\item the solution $(a,b,c,d,-1,-1,-1)$ with $ab=cd$ is geometrically acceptable if and only if $ab < 0$; in this case, the corresponding spaces are CM (Lemma~\ref{lemma:2RightAnglesImplyCM});
\item these solution corresponds to 2 geometric configurations, up to homothety. Setting $(a,b,c,d)=(3, \frac{1}{2}, \frac{3}{2}, 1)$ and
\[
\tau_1 = -\frac{1}{2} \sqrt{\frac{1}{2} \left(2 \sqrt{5}+\sqrt{52 \sqrt{5}+45}+13\right)} \cdot e^{\frac{3\pi i}{10}}, \quad \tau_2 = -\frac{1}{2} \sqrt{\frac{1}{2} \left(2 \sqrt{5}+\sqrt{52 \sqrt{5}+45}+13\right)} \cdot e^{\frac{3\pi i}{10}}
\]
gives the two non-equivalent configurations. For both values of $\tau$ one has
\[
\operatorname{arg} (\tau / \overline{\tau} ) = \frac{3}{10} \cdot 2 \pi, \quad 
\operatorname{arg} \left( \frac{\tau +a}{\tau + b} \bigm/ \frac{\overline{\tau}+a}{\overline{\tau}+b} \right) = \frac{2}{5} \cdot 2 \pi, \quad \operatorname{arg} \left( \frac{\tau +c}{\tau + d} \bigm/ \frac{\overline{\tau}+c}{\overline{\tau}+d} \right) = \frac{1}{10} \cdot 2\pi,
\]
and these are the only rational angles in $\langle 1, \tau \rangle$, which is in particular a space of exact type 3\circled{2};
\item none of these solutions gives rise to a geometric configuration;
\item the families (a) and (b) parametrise the same spaces. The parametrisation in (b) gives a geometrically valid solution if and only if $c^2-8cd+4d^2>0$. For such $c, d$ there are two corresponding values of $\tau$, and they generate homothetic spaces. Denote by $V_{[c: d]}$ the resulting space (up to homothety), then $V_{[c : d]}$ generically has 4 rational angles, of squared amplitudes $\zeta_6^{\pm 1}, \zeta_6^{\pm 2}$. It has less than $4$ rational angles if and only if $c+2d=0$, in which case $V_{[c:d]}$ is of exact type $3 \circled{2}$. The space $V_{[c:d]}$ has more than 4 rational angles if it is either the CM space $\Q(\zeta_3)$, or if it is not CM but has a right angle, in which case it is of exact type $5\circled{2}$. The latter possibility happens if and only if the space $V_{[c:d]}$ corresponds to a point on family (c), which is in fact a rational sub-family of (b).
\end{enumerate}

\end{theorem}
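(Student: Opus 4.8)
The plan is to attach to each of the six cases of Theorem~\ref{thm:abcd} all of its geometric realisations, by means of the following reconstruction recipe. A geometric realisation of a solution $(a,b,c,d,x,y,z)$ of \eqref{eq:abcd} is a space $\langle 1,\tau\rangle$ with $\tau\in\C\setminus\R$ in which $(1,\tau)$ has squared amplitude $x$ and $(\tau+a,\tau+b),(\tau+c,\tau+d)$ have squared amplitudes $y,z$. The first condition forces $\tau/\overline\tau=x$, so $\tau=r\theta$ with $\theta\in\roots$ a fixed square root of $x$ (note $\theta\notin\R$ since $x\neq 1$) and $r\in\R^\times$. The second condition is equation \eqref{eqn:SecondoAngolo}, which --- since $ab=cd$ and \eqref{eq:abcd} holds --- is equivalent to its analogue for $(c,d,z)$; indeed \eqref{eq:abcd} is precisely the assertion that the two equations \eqref{eqn:SecondoAngolo.r} attached to $(a,b,y)$ and to $(c,d,z)$ have the same middle coefficient. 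Substituting $\tau=r\theta$ turns it into $r^2+\beta r+ab=0$ with $\beta=\dfrac{a(y-x)+b(xy-1)}{\theta(y-1)}$, and a one-line check using $\overline x=x^{-1}$, $\overline y=y^{-1}$, $\overline\theta=\theta^{-1}$ shows $\beta\in\R$. Hence a realisation exists if and only if $\beta^2-4ab\ge 0$, and then there are at most two admissible $r$, hence at most two spaces $\langle 1,r\theta\rangle$ up to homothety; the non-degeneracy conditions \eqref{eq:Conditions} together with $\tau\notin\R$ make the three angles genuine. Once $\tau$ is explicit, the exact type of $\langle 1,\tau\rangle$ is read off by enumerating all its rational angles --- a finite task once one knows $\langle 1,\tau\rangle$ is neither CM nor rectangular, since then it lies in no unbounded family and the common order of any triple of its independent rational angles is bounded.

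Cases (1)--(3) are quick. Part (1) is exactly the statement --- recalled in Section~\ref{sect:Subsums} and proved in \cite{DvoVenZan}*{§ 6} --- that solutions in unbounded families come from rectangular spaces. For part (2), since $ab=cd$ and no angle may degenerate, two vanishing entries among $a,b,c,d$ may, up to the symmetries \eqref{simmetrie}, be taken to be $a=c=0$; then \eqref{eqn:SecondoAngolo} is linear, giving $\tau=b\,\dfrac{1-xy}{y-1}$, and one checks directly (e.g.\ by exhibiting the rational quadruple $\{1,\tau,\tau+b,\tau+d\}$ and invoking \cite{DvoVenZan}) that $\langle 1,\tau\rangle$ is superrectangular. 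For part (3), $x=y=z=-1$ gives $\theta=\pm i$ and $\beta=0$, so $r^2+ab=0$ has a nonzero real root exactly when $ab<0$; then $\tau^2=ab\in\Q_{<0}$, so $\langle 1,\tau\rangle=\Q(\sqrt{ab})$ is imaginary quadratic, and since $(1,\tau)$ and $(\tau+a,\tau+b)$ are two distinct right angles Lemma~\ref{lemma:2RightAnglesImplyCM} applies.

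Cases (4) and (5) are finite computations. For each sporadic solution one first passes to a representative of its $G\times\Gal(\overline{\Q}/\Q)$-orbit in which $x$ is the squared amplitude of $(1,\tau)$ --- i.e.\ the value of $x$ occurring in the displayed $\tau$ --- and then computes $\beta$ and $\beta^2-4ab$ in the relevant cyclotomic field. For the two $n=12$ solutions one finds $\beta^2-4ab<0$ for every orbit representative, so no realisation exists; this proves (5). For the $n=10$ solution one finds $\beta^2-4ab$ to be a positive element of $\Q(\sqrt5)$, giving two real values of $r$ and the two complex numbers $\tau_1,\tau_2$; the three listed angles are then read off from $(1,\tau_i)$, $(\tau_i+a,\tau_i+b)$, $(\tau_i+c,\tau_i+d)$. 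Finally $\langle 1,\tau_i\rangle$ is neither CM (its generator has large degree over $\Q$) nor rectangular ($\arg\tau_i\neq\pm\pi/2$ and $|\tau_i|\neq 1$), so it lies in no unbounded family; a direct check with the explicit $\tau_i$, using the bounds on common orders, then shows it has no fourth rational angle, so its exact type is $3\circled{2}$.

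Case (6) is the crux of the theorem. First I would prove that families (a) and (b) parametrise the same spaces, by exhibiting an explicit reparametrisation --- together with one of the symmetries \eqref{simmetrie} --- carrying one onto the other, equivalently by computing $\tau$ from both and matching the homothety classes. Working with (b), $(a,b,c,d)=(2d,c/2,c,d)$, $x=\zeta_6$, $y=z=\zeta_6^2$, $\theta=\zeta_{12}$, one finds $\theta(y-1)=-\sqrt 3$, hence $\beta=\dfrac{2d+c}{\sqrt3}$ and $\beta^2-4ab=\tfrac13(c^2-8cd+4d^2)$: this yields the acceptability criterion $c^2-8cd+4d^2>0$ and the two values $\tau_\pm=\dfrac{-(2d+c)\pm\sqrt{c^2-8cd+4d^2}}{2\sqrt3}\,\zeta_{12}$, and a short computation shows that $\langle 1,\tau_+\rangle$ and $\langle 1,\tau_-\rangle$ are homothetic. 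It then remains to determine the exact type of $V_{[c:d]}:=\langle 1,\tau_+\rangle$: enumerating its rational angles one finds that generically there are four, of squared amplitudes $\zeta_6^{\pm1},\zeta_6^{\pm2}$, and one must analyse the degenerations --- (i) the single point $c+2d=0$, where one of the four angles disappears and $V_{[c:d]}$ has exact type $3\circled{2}$; (ii) the locus where $V_{[c:d]}$ is the CM space $\Q(\zeta_3)$, which turns out to be $c^2-8cd+4d^2\in(\Q^\times)^2$ (then $\tau_+\in\Q(\sqrt{-3})$); and (iii) the locus where $V_{[c:d]}$ is not CM but contains a rational right angle, giving exact type $5\circled{2}$. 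The hard part will be (iii): the condition that $\langle 1,\tau_+\rangle$ contain a right angle (written out via \eqref{eqn:1angle:prima} with $\mu^2=-1$) translates into a square condition on a binary quadratic form in $c,d$ --- I expect $(c-d)(c-4d)\in(\Q^\times)^2$ --- hence into a rational curve, which must then be matched with the explicit parametrisation of family (c), showing the two coincide. Granting these identifications, the full statement of (6), and hence the theorem, follows.
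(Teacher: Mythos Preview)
Your approach matches the paper's closely: the paper also dismisses (1)--(5) as straightforward computations and concentrates on (6), where it carries out essentially the same programme you outline --- compute $\tau$ explicitly from family (b), check the discriminant condition $c^2-8cd+4d^2>0$, show the two roots give homothetic spaces, identify (c) as the sublocus $(c-d)(c-4d)\in(\Q^\times)^2$ via an explicit change of variables, and identify the CM locus as $c^2-8cd+4d^2\in(\Q^\times)^2$.

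There are two places where the paper supplies an argument that you leave as ``a short computation'' or ``enumerating its rational angles'', and which are worth knowing. First, for homothety of the two $\tau$-values: rather than a direct manipulation, the paper observes that $\tau_+\tau_-$ and $\tau_++\tau_-$ both lie in the $2$-dimensional $\Q$-space $\Q(\zeta_6)$, whence $1,\tau_+\tau_-,\tau_++\tau_-$ are $\Q$-linearly dependent, which (by a general criterion) forces $\langle 1,\tau_+\rangle$ and $\langle 1,\tau_-\rangle$ to be homothetic. Second --- and this is the real content you are missing --- your phrase ``enumerating its rational angles'' hides the only nontrivial step: for a \emph{family} of spaces there is no finite enumeration without an a priori bound on amplitudes. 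The paper's argument is field-theoretic: any squared amplitude of a rational angle in $V_{[c:d]}$ is a root of unity in $\Q(\tau,\zeta_6)=\Q(\zeta_6,\sqrt{c^2-8cd+4d^2})$, an at most quadratic extension of $\Q(\zeta_6)$; hence only $6$th or primitive $12$th roots of unity are possible. A primitive $12$th root would force $\sqrt{c^2-8cd+4d^2}\in\Q(\zeta_{12})\cap\R=\Q(\sqrt3)$, i.e.\ $c^2-8cd+4d^2\in 3(\Q^\times)^2$; since $c^2-8cd+4d^2\equiv c^2\pmod 4$ and $3$ is not a square mod $4$, this is impossible (outside the CM locus). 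This is what reduces the problem to checking the three amplitudes $\zeta_6,\zeta_6^2,\zeta_6^3$, after which the count of four angles (generically), the degeneration at $c+2d=0$, and the right-angle locus $(c-d)(c-4d)\in(\Q^\times)^2$ all fall out as you say.
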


\begin{proof}
Parts (1)-(5) are straightforward computations, so we only prove (6).

The solutions in families (a) and (b)
 for the same pair $(c,d)$ and the same angle $z=\zeta_6^2$ lead to the same equations 
\eqref{eqn:SecondoAngolo}, \eqref{eqn:SecondoAngolo.r}, hence parametrise the same spaces, which we describe in greater detail below.

We show that the family (c) is a sub-family of (b).
Clearly, the family in (b) is equivalent to $\left(c, d, 2d, \frac{1}{2}c, \zeta_6, \zeta_6^2, \zeta_6^2 \right)$. Given a point
\[
\displaystyle \left(\frac{3 d_0^2-4 b_0d_0}{2d_0-3b_0},b_0,\frac{3 b_0 d_0-4 b_0^2}{2d_0-3b_0}, d_0, \zeta_6, \zeta_6^2, \zeta_6^3 \right)
\]
in family (c), we take
\[
c = d_0 \frac{3d_0-4b_0}{2d_0-3b_0}, \quad d=b_0:
\]
this gives a point on family (b), and the corresponding values of $\tau$ satisfy the same quadratic equation (specifically, that coming from eliminating $\overline{\tau}$ from the equations $\tau/\overline{\tau}=\zeta_6$ and $\frac{\tau+c}{\tau+d} \big/ \overline{ \left(\frac{\tau+c}{\tau+d}\right) }=\zeta_6^2$). Thus, (c) is a sub-family of (b). 

Let us show that it is precisely the subfamily such that the quantity $(c-d)(c-4d)$ is a square.
Indeed, on the one hand we have
\[
\begin{aligned}
(c-d)(c-4d) & = \left( d_0 \frac{3d_0-4b_0}{2d_0-3b_0} - b_0 \right)\left(d_0 \frac{3d_0-4b_0}{2d_0-3b_0}-4b_0 \right) \\
& = 3 \frac{d_0^2-2b_0d_0+b_0^2}{2d_0-3b_0} \cdot 3  \frac{d_0^2-4b_0d_0+4b_0^2}{2d_0-3b_0} \\
& = \left(3\frac{(d_0-b_0)(d_0-2b_0)}{2d_0-3b_0}\right)^2;
\end{aligned}
\]
conversely, let $P_{(b)}=(c_0,d_0,2d_0,\frac{1}{2}c_0,\zeta_6, \zeta_6^2, \zeta_6^2)$ be a point on family (b) and suppose that $(c_0-d_0)(c_0-4d_0)$ is a square. We take $b=d_0$ and $d$ such that
\[
c_0 = \frac{3d^2-4bd}{2d-3b} \Leftrightarrow c_0(2d-3d_0) =3d^2-4d_0d \Leftrightarrow 3d^2 - 2d (2d_0+c_0)+2c_0d_0=0.
\]
The (reduced) discriminant of this quadratic equation in $d$ is $(2d_0+c_0)^2-9c_0d_0=4d_0^2-5c_0d_0 + c_0^2=(c_0-d_0)(c_0-4d_0)$, which is a square by assumption. Thus, a point of family (b) for which $(c_0-d_0)(c_0-4d_0)$ is a square gives rise to the point $P_{(c)}=\left(\frac{3d^2-4bd}{2d-3b}, b, \frac{3bd-4b^2}{2d-3b}, d, \zeta_6, \zeta_6^2, \zeta_6^3\right)$ on family (c). Note that $P_{(c)}$ gives the same quadratic equation for $\tau$ as the original point $P_{(b)}$.
Finally, we should check that the point $P_{(c)}$ that we obtained does in fact satisfy $2d-3b \neq 0$. Assume the contrary. Replacing the values found above for $b, d$ we obtain
\[
\begin{aligned}
2d=3b & \Leftrightarrow 4d_0+2c_0 \pm 2 \sqrt{(c_0-d_0)(c_0-4d_0)} = 9d_0 \\
& \Leftrightarrow 4(c_0^2-5c_0d_0+4d_0^2) = (5d_0-2c_0)^2\\
& \Leftrightarrow 9d_0^2 = 0,
\end{aligned}
\]
which corresponds to a CM space.

Finally, we describe which solutions correspond to geometric configurations. This happens when  \eqref{eqn:SecondoAngolo.r} has a real solution, which happens if and only if $c^2-8cd+4d^2>0$.
Let us show that in this case to a point $[c,d]\in\mathbb{P}_1(\Q)$ corresponds a unique lattice up to homotethy.

Indeed, on the one hand it is clear that rescaling $(c, d)$ by a non-zero rational number gives a homothetic space. On the other hand, for fixed $(c, d)$, the relevant values of $\tau$ are obtained as the roots of a quadratic equation (so there are in principle two $\tau$ for any given $(c,d)$), but in fact the two lattices obtained in this way are always homothetic. To see this, notice that $\tau_1, \tau_2$ are the roots of the equation
\[
(-\zeta_6^2 + 2)\tau^2 + (2c + d)\tau + (\zeta_6 + 1)cd=0;
\]
it follows easily that $\tau_1\tau_2$ and $\tau_1+\tau_2$ both belong to the $2$-dimensional $\Q$-vector space $\Q(\zeta_6)$, hence the three numbers $1, \tau_1\tau_2$ and $\tau_1+\tau_2$ are $\Q$-linearly dependent.
It is easy to see that---in complete generality---the lattices $\langle 1, \tau_1 \rangle_{\Q}$ and $\langle 1, \tau_2 \rangle_{\Q}$ are homothetic if and only if the four numbers $1, \tau_1, \tau_2, \tau_1\tau_2$ are linearly dependent over $\Q$ (see \cite{DvoVenZan}*{Remark 2.1}), which proves the claim.

Let $[c,d]\in\mathbb{P}_1(\Q)$ be such that  $c^2-8cd+4d^2>0$. Fix
\begin{equation*}
 \tau=-\zeta_{12}\frac{\sqrt{3}}{6}(c+2d + \sqrt{c^2-8 c d +4 d^2})
\end{equation*}
and write $V_{[c:d]}=\generated{1,\tau}$.

Let $S=\{[c:d]\in\mathbb{P}_1(\Q)\mid c^2-8 c d +4 d^2\text{ is a rational square}\}\cup \{[2:-1]\}$.

The map
\[
\begin{array}{cccc}
f : & \mathbb{P}_1(\mathbb{Q}) & \to & \{ \text{homothety classes of spaces} \} \\
& [c : d] & \mapsto & \text{class of } V_{[c : d]}
\end{array}
\]
is 2-to-1 on $\mathbb{P}_1(\mathbb{Q})\setminus S$. The involution $\iota : [c : d] \mapsto [4d : c]$ preserves the fibres of $f$.

If $c^2-8cd+4d^2$ is a rational square, then $V_{[c:d]}=\Q(\zeta_3)$ is a CM space; this happens for infinitely many $[c:d]$.

If $c+2d=0$ we have that $V_{[2:-1]}$ has exact type  $3 \circled{2}$.

In all other cases $V_{[c:d]}$ contains 4 rational angles $(1,\tau), (\tau+\frac{3cd}{c+2d} , \tau+\frac{c+2d}{3}) , (\tau+2d, \tau+\frac{1}{2}c), (\tau+c,\tau+d)$, all distinct and not adjacent, with squared amplitudes $\zeta_6,\zeta_6,\zeta_6^2,\zeta_6^2$ respectively. 

The space $V_{[c:d]}$ contains additional rational angles if and only if $(c-d)(c-4d)$ is a square. In this case, $(\tau+e, \tau+f)$, with $e, f$  given by $\frac{(c+2d) \pm \sqrt{(c-d)(c-4d)}}{3}$ respectively, is a right angle and there are no other rational angles in this space. This rational sub-family is preserved by the involution $\iota$.

To see that no other angles can appear, observe that such a square amplitude must lie in the field $\Q(\tau,\zeta_6)=\Q(\zeta_6, \sqrt{c^2-8cd+4d^2})$, which is an extension of $\Q(\zeta_6)$ of degree al most 2. The only possibilities are $\zeta_6^1,\zeta_6^2,\zeta_6^3$ or a primitive 12th root of unity; in this latter case however we must have that  $\sqrt{c^2-8cd+4d^2}\in\Q(\zeta_{12})\cap \R = \Q(\sqrt{3})$, and this can only happen if $c^2-8cd+4d^2$ is a rational square because 3 is not a square modulo 4.

We finally remark that all four Galois conjugates of $V_{[c:d]}$ are homothetic to $V_{[c:d]}$. We have already showed it for the Galois morphism that acts trivially on $\zeta_6$. We now notice that $-\overline{\zeta_{12}}\frac{\sqrt{3}}{6}(c+2d - \sqrt{c^2-8 c d +4 d^2})$ times $\tau$ is a rational number, so this conjugate too generates a space homothetic to $V_{[c:d]}$.

As for $\overline{\tau}$, one can check directly that $\sqrt{c^2-8 c d +4 d^2}\in \Q(\tau+\overline{\tau})$ and from there that $\overline{V_{[c:d]}}$ is also homothetic to $V_{[c:d]}$.
\end{proof}

\section{The 2+2+2 case}\label{sect:222}
\subsection{Bounding the degree}
Again in analogy with Proposition~\ref{prop:LongSubsum3plus2}, one proves:
\begin{proposition}\label{prop:LongSubsum222}
With notation as in Proposition~\ref{prop:Subsums},
\begin{enumerate}
\item if $\ell \geq 10$ then $e \leq 2$, and the common order $n$ of $x, y, z$ divides $2 \prod_{p \leq \ell} p$ (hence divides $2 \prod_{p \leq 23} p$);
\item in all cases, if $\mathbb{Z}^3/M$ is finite, then its exponent $e$ satisfies $e \in \{1,\ldots,22\} \cup \{24,26,30\}$.
\end{enumerate}
\end{proposition}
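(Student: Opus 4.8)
The plan is to argue exactly as in the proof of Proposition~\ref{prop:LongSubsum3plus2}, the only difference being that the modules attached to the vanishing subsums of \eqref{eqn:main} need no longer exhaust $\mathbb{Z}^3$, so one has to keep track of the exponent of the quotient rather than merely check that it is trivial. Throughout, let $V=\{0,1,2\}^3$ denote the set of exponent vectors of the $27$ monomials of $P$, and, given a solution of \eqref{eqn:main} together with a maximally refined decomposition into vanishing subsums $\{I_j\}$, let $M_j$ and $M=\sum_j M_j$ be as in Proposition~\ref{prop:Subsums}.

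For part (1), I would first run the following finite check: for every one of the $\binom{27}{10}=8{,}436{,}285$ subsets $I\subseteq V$ of cardinality $10$, form the module $M_I=\langle \underline{i}-\underline{i}_0 : \underline{i}\in I\rangle_{\mathbb{Z}}$ for an arbitrary $\underline{i}_0\in I$ and verify that $\mathbb{Z}^3/M_I$ is finite with exponent dividing $2$. (Finiteness is automatic: no $10$-element subset of the $3\times3\times3$ grid $V$ is coplanar---a plane not of the form $\{x_k=c\}$ meets each of the three layers $\{x_k=c\}$ in a line, hence in at most $3$ grid points---so $M_I$ always has rank $3$.) Granting this, suppose $\ell=\max_j|I_j|\geq 10$, pick a subsum $I_{j_0}$ of maximal length and an arbitrary $10$-element subset $I'\subseteq I_{j_0}$; then $M_{I'}\subseteq M_{j_0}\subseteq M$, so $\mathbb{Z}^3/M$ is a quotient of $\mathbb{Z}^3/M_{I'}$ and therefore is finite with exponent $e$ dividing $2$. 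Proposition~\ref{prop:Subsums}(2) then yields that the common order of $x,y,z$ divides $e\prod_{p\leq\ell}p$, hence divides $2\prod_{p\leq\ell}p$; and since $\ell\leq 27$ and there is no prime in the interval $(23,27]$, this in turn divides $2\prod_{p\leq 23}p$.

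For part (2), I would follow the scheme of Proposition~\ref{prop:LongSubsum3plus2}(2): set $W=\{v-v':v,v'\in V\}=\{-2,\dots,2\}^3$ and note that, whatever the decomposition $\{I_j\}$, the module $M$ is generated by vectors of $W$. If $\mathbb{Z}^3/M$ is finite then $M$ has rank $3$, so by elementary linear algebra (tensoring a generating set with $\mathbb{Q}$) one extracts three vectors $w_1,w_2,w_3\in W$ spanning a finite-index subgroup of $M$; consequently the exponent $e$ of $\mathbb{Z}^3/M$ divides the exponent of $\mathbb{Z}^3/\langle w_1,w_2,w_3\rangle$. It therefore suffices to loop over the (a few hundred thousand) $3$-element subsets $\{w_1,w_2,w_3\}$ of $W$, discard the ones with $\det=0$, compute the largest elementary divisor of $\mathbb{Z}^3/\langle w_1,w_2,w_3\rangle$ for the remaining ones, and form the set of all divisors of the finitely many integers so obtained. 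The content of the assertion is then precisely that this set equals $\{1,\dots,22\}\cup\{24,26,30\}$, whence $e$ must belong to it.

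Both parts reduce to finite, conceptually routine computations, and the two structural ingredients---the monotonicity $I\subseteq I'\Rightarrow M_I\subseteq M_{I'}$ used in part (1), and the reduction to three generators in part (2)---are exactly the ones already used for Proposition~\ref{prop:LongSubsum3plus2}. The only genuine obstacle is the size of the enumeration in part (1): one iterates over $8.4$ million $10$-subsets of $V$ and, for each, performs a normal-form computation over $\mathbb{Z}$, so the real work is in organising this so that it terminates quickly. In practice this can be sped up by observing that, once rank $3$ is known, one only needs to test the single condition $2\mathbb{Z}^3\subseteq M_I$, and by exploiting the action of the signed-permutation group $G$ on $V$ to handle many subsets simultaneously.
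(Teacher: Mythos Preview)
Your proposal is correct and follows essentially the same route as the paper, which explicitly proves Proposition~\ref{prop:LongSubsum222} ``in analogy with Proposition~\ref{prop:LongSubsum3plus2}'' via the same two computations you describe (the paper even highlights the figure $\binom{27}{10}=8{,}436{,}285$ for part~(1)). Your additional observations---the coplanarity argument showing rank~$3$ is automatic for $10$-subsets, and the symmetry/$2\mathbb{Z}^3\subseteq M_I$ shortcuts---are not in the paper but are correct and helpful implementation remarks.
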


\begin{remark}
A coarser, but still useful, bound as in (2) can be obtained by noticing that the exponent of $\mathbb{Z}^3/M$ is bounded by the non-zero determinant of a $3 \times 3$ matrix $A$ with columns given by three vectors of the form $\underline{i} - \underline{i}_j$ (notation as in the proof of Proposition~\ref{prop:LongSubsum3plus2}). Since each coefficient of $\underline{i} - \underline{i}_j$ is bounded in absolute value by $2$, one immediately obtains that $\det(A) \leq 3! \cdot 2^3=48$.
\end{remark}

The argument of Corollary~\ref{cor:Bound32Step1} now gives the following.

\begin{corollary}
Let $(a,b,c,d,x,y,z)$ be a solution to Equation \eqref{eqn:main}. If the solution does not belong to an unbounded family, the common order $n$ of $x,y,z$ satisfies at least one of the following:
\begin{enumerate}
\item $n$ is squarefree or twice a squarefree, and divisible only by primes $\leq 23$;
\item $n$ is of the form $em$, with $m$ squarefree and divisible only by primes $\leq 7$, and $e \in \{1,...,22\} \cup \{24,26,30\}$.
\end{enumerate}
In particular, $n$ divides $2^5 \cdot 3^3 \cdot 5^2 \cdot 7^2 \cdot 11 \cdot 13 \cdot 17 \cdot 19 \cdot 23$.
\end{corollary}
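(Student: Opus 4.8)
The plan is to repeat, almost verbatim, the dichotomy used in the proof of Corollary~\ref{cor:Bound32Step1}, now feeding in Proposition~\ref{prop:LongSubsum222} in place of Proposition~\ref{prop:LongSubsum3plus2}. Fix a solution $(a,b,c,d,x,y,z)$ of \eqref{eqn:main} not belonging to an unbounded family, and fix a maximally refined decomposition of the $27$ monomials of $P$ into vanishing subsums $\{0,1,2\}^3 = \coprod_j I_j$, with associated modules $M_j$ as in \eqref{eq:submoduleMj} and $M = M_1 + \cdots + M_r$. Since the solution does not belong to an unbounded family, it does not satisfy the conclusion of case (1) of Proposition~\ref{prop:Subsums}; by that same case (1), this forces $\mathbb{Z}^3/M$ to be finite. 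Let $e$ denote its exponent and let $\ell = \max_j |I_j|$ be the length of the longest vanishing subsum.

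\textbf{The two cases.} If $\ell \geq 10$, then Proposition~\ref{prop:LongSubsum222}(1) gives immediately $e \leq 2$ and $n \mid 2\prod_{p \leq 23} p$; since $e \leq 2$, the common order $n$ is squarefree or twice a squarefree, and it is divisible only by primes $\leq 23$. This is alternative (1) of the statement. If instead $\ell \leq 9$, then the primes occurring in $\prod_{p \leq \ell} p$ are among $\{2,3,5,7\}$, so $m := \prod_{p \leq \ell} p$ is a squarefree integer dividing $2 \cdot 3 \cdot 5 \cdot 7$; by Proposition~\ref{prop:Subsums}(2) the common order $n$ divides $e m$, while Proposition~\ref{prop:LongSubsum222}(2) tells us $e \in \{1,\ldots,22\} \cup \{24,26,30\}$. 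Thus $n = e m$ has exactly the shape required by alternative (2).

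\textbf{The explicit divisibility.} For the final ``in particular'' one only needs to bound the $p$-adic valuations of $n$ in each case. In case (1), $n$ divides $2 \cdot 3 \cdot 5 \cdot 7 \cdot 11 \cdot 13 \cdot 17 \cdot 19 \cdot 23$, which divides $2^5 \cdot 3^3 \cdot 5^2 \cdot 7^2 \cdot 11 \cdot 13 \cdot 17 \cdot 19 \cdot 23$. In case (2), $n = em$ with $m \mid 2 \cdot 3 \cdot 5 \cdot 7$ and $e$ in the finite list above; inspecting that list one checks that $v_2(e) \leq 4$ (attained at $e=16$), $v_3(e) \leq 2$, $v_5(e) \leq 1$, $v_7(e) \leq 1$, and that no prime $\geq 23$ divides $e$. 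Hence $v_2(n) \leq 5$, $v_3(n) \leq 3$, $v_5(n) \leq 2$, $v_7(n) \leq 2$, and the only primes dividing $n$ are those $\leq 19$, so again $n$ divides $2^5 \cdot 3^3 \cdot 5^2 \cdot 7^2 \cdot 11 \cdot 13 \cdot 17 \cdot 19 \cdot 23$. Taking the two cases together proves the corollary.

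\textbf{Where the difficulty lies.} The corollary itself is bookkeeping: all the substance is concentrated in Proposition~\ref{prop:LongSubsum222}, whose proof is the genuinely computational step (looping over the relevant subsets of the $27$ monomials of $P$ and computing exponents of the quotients $\mathbb{Z}^3/M$, e.g.\ via Smith normal form), together with the Conway--Jones bound invoked through Proposition~\ref{prop:Subsums}. The only mildly delicate points in the present deduction are (i) checking that the dichotomy on $\ell$ is exhaustive and that $\mathbb{Z}^3/M$ is indeed finite for the solutions under consideration, which is exactly what ``not belonging to an unbounded family'' buys us, and (ii) the elementary but not entirely mechanical verification, carried out above, that the listed admissible values of $e$ really do yield the stated prime-power bound.
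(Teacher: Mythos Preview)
Your proof is correct and follows exactly the approach the paper intends: the paper's own argument for this corollary is literally the single sentence ``The argument of Corollary~\ref{cor:Bound32Step1} now gives the following,'' and you have spelled out precisely that argument, substituting Proposition~\ref{prop:LongSubsum222} for Proposition~\ref{prop:LongSubsum3plus2}. Two minor cosmetic points: in the $\ell\leq 9$ branch you write ``$n = em$'' where strictly speaking Proposition~\ref{prop:Subsums}(2) only gives $n \mid em$ (the corollary's own phrasing ``$n$ is of the form $em$'' is equally loose, so this is harmless), and in the $\ell\geq 10$ branch the clause ``since $e\leq 2$'' is redundant once you have already quoted $n\mid 2\prod_{p\leq 23}p$ from Proposition~\ref{prop:LongSubsum222}(1).
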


Reasoning as in Corollary~\ref{cor: sharp bound case 2+3}, one can then check the following.

\begin{proposition}\label{prop:RulingOutDivisors}
Let $(a,b,c,d,x,y,z)$ be a solution to Equation \eqref{eqn:main}, and let $n$ be the common order of $x,y,z$. The following implication holds.
suppose that $q$ divides $n$ exactly, where $q$ is one of the following:
\begin{itemize}
\item $11, 13, 17, 19, 23$;
\item $2^5, 3^3, 5^2, 7^2$; %
\item $5 \cdot 7, 3^2 \cdot 7, 2^4 \cdot 7, 2^3 \cdot 7$;
\item $3^2 \cdot 5, 2^4 \cdot 5$.
\end{itemize}
Then $[a : b : c : d] \in \mathbb{P}_{3}(\mathbb{Q})$ satisfies one of the following:
\begin{itemize}
\item $ab=cd$;
\item $abcd(a-b)(a-c)(a-d)(b-c)(b-d)(c-d)=0$.
\end{itemize}
\end{proposition}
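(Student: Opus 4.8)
The plan is to follow exactly the template already established for Corollary~\ref{cor: sharp bound case 2+3} in the $\circled{3}+\circled{2}$ case, transplanting it to the present equation \eqref{eqn:main}. The starting point is the previous corollary, which tells us that $n$ divides $2^5 \cdot 3^3 \cdot 5^2 \cdot 7^2 \cdot 11 \cdot 13 \cdot 17 \cdot 19 \cdot 23$. For each prime power $q$ in the list of the statement, we first observe that $q \mid n$ forces $q$ to divide $n$ \emph{exactly}: this is immediate from the shape of the divisor of $n$ just quoted (the prime $p$ underlying $q$ appears in $n$ to an exponent no larger than the one in $q$, so $q$ and $n/q$ are coprime). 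Hence, by the discussion in Section~\ref{sect:GaloisConjugates}, a solution $(a,b,c,d,x,y,z)$ with $q \mid n$ gives rise to a $\overline{\mathbb{Q}}$-point of the variety $V_{\xi_1,\xi_2,\xi_3,\mathbb{Q}}$ for suitable $q$-th roots of unity $\xi_i = \zeta_q^{e_i}$ with $(e_1,e_2,e_3,q)=1$, and in particular to a $\mathbb{Q}$-point of the projection $W_{\xi_1,\xi_2,\xi_3}$ to $\mathbb{A}^4_{\mathbb{Q}}(a,b,c,d)$.

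The core of the argument is then a finite computation, carried out in MAGMA, entirely parallel to Proposition~\ref{prop:RuleOutDivisors3Plus2}. For each $q$ in the list and each admissible triple $(e_1,e_2,e_3)$ modulo the symmetries \eqref{simmetrie} and the Galois action (which identify the various $(\xi_1,\xi_2,\xi_3)$ and cut the number of cases down drastically), one writes down the $\varphi(q)$ equations \eqref{eq: equations defining the Galois conjugates variety} defining $V_{\xi_1,\xi_2,\xi_3}$, descends to the $\mathbb{Q}$-variety, and checks that the intersection of $V_{\xi_1,\xi_2,\xi_3,\mathbb{Q}}$ with the open locus
\[
\Big\{\, abcd\,(a-b)(a-c)(a-d)(b-c)(b-d)(c-d)\,(ab-cd) \neq 0 \,\Big\}
\]
is empty. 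Concretely this is a Gr\"obner-basis saturation / radical-membership computation: one adjoins a slack variable $w$ with $w \cdot abcd(a-b)\cdots(c-d)(ab-cd)=1$ to the ideal generated by the $\varphi(q)$ equations and verifies that the resulting ideal is the unit ideal. The list of $q$'s in the statement is precisely the set of prime powers whose divisibility of $n$ is \emph{not} already excluded by the coarser bounds, but which could still occur in principle; for each of them the slack-variable computation returns $(1)$, so no non-degenerate solution with $ab \neq cd$ can have $n$ divisible exactly by such a $q$.

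I expect the main obstacle to be purely computational rather than conceptual: the largest $q$'s (e.g. $2^5=32$, for which $\varphi(q)=16$, or $3^3=27$, $2^4\cdot 7=112$) produce systems with many equations in seven variables, and the Gr\"obner computations over $\mathbb{Q}$ (or over $\mathbb{Q}(\zeta_q)$ before descent) can be heavy; one has to be careful to exploit the symmetry group $G$ of \eqref{simmetrie} and the Galois action to keep the number of triples $(e_1,e_2,e_3)$ small, and perhaps to work prime-by-prime or to reduce modulo a well-chosen prime as a first filter. A second, milder point to be careful about is bookkeeping: one must make sure that \emph{every} prime power $q$ with $q \mid n$ possible but $q$ not in the displayed list is genuinely already handled by the earlier corollary (so that the present proposition, combined with it, does bound $n$), and that the exact-divisibility hypothesis is legitimate in each invocation — but this is exactly the inductive bootstrapping already used in the proof of Corollary~\ref{cor: sharp bound case 2+3} (first rule out the top prime powers, then re-apply the argument to the next layer now that the coprimality hypothesis holds), and it transfers verbatim.
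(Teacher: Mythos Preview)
Your approach is correct and is exactly what the paper does: the proof is a finite MAGMA computation of precisely the kind you describe, following the template of Proposition~\ref{prop:RuleOutDivisors3Plus2} and Corollary~\ref{cor: sharp bound case 2+3}. Two minor remarks: first, ``$q$ divides $n$ exactly'' is a \emph{hypothesis} of the proposition, so your opening paragraph (arguing that $q \mid n$ forces exact divisibility, which is in any case false for composite $q$ like $5\cdot 7$ before bootstrapping) is unnecessary here---the bootstrapping belongs to the subsequent corollary, not to this proposition; second, as in Proposition~\ref{prop:RuleOutDivisors3Plus2} you should also saturate by the conditions $x'\neq 1$, $y'\neq 1$, $z'\neq 1$ whenever the corresponding $e_i=0$, since solutions implicitly satisfy \eqref{eq:Conditions}.
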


\begin{corollary}
In the setting of Proposition~\ref{prop:RulingOutDivisors}, at least one of the following holds:
\begin{enumerate}
\item $ab=cd$;
\item $abcd(a-b)(a-c)(a-d)(b-c)(b-d)(c-d)=0$;
\item $(a,b,c,d,x,y,z)$ \bif;
\item $n \in \{ 1, 2, 3, 4, 5, 6, 7, 8, 9, 10, 12, 14, 15, 16, 18, 20, 21, 24, 28, 30, 36, 40, 42, 48, 60, 72, 84, 120, 144 \}$.
\end{enumerate}
\end{corollary}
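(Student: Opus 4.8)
The plan is a direct deduction from Proposition~\ref{prop:RulingOutDivisors} together with the coarse divisibility bound proved just above it, followed by an elementary enumeration of the surviving possibilities for $n$. First I would assume that none of items (1), (2), (3) holds and prove (4). Since (3) fails, the solution does not belong to an unbounded family, so the corollary preceding Proposition~\ref{prop:RulingOutDivisors} gives $n \mid 2^5 \cdot 3^3 \cdot 5^2 \cdot 7^2 \cdot 11 \cdot 13 \cdot 17 \cdot 19 \cdot 23$; write $n = 2^{a_2} 3^{a_3} 5^{a_5} 7^{a_7} \cdot 11^{a_{11}} 13^{a_{13}} 17^{a_{17}} 19^{a_{19}} 23^{a_{23}}$, so that $a_2 \leq 5$, $a_3 \leq 3$, $a_5, a_7 \leq 2$, and the remaining exponents are at most $1$. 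Since (1) and (2) fail, Proposition~\ref{prop:RulingOutDivisors} tells us that \emph{no} integer $q$ in its list can divide $n$ exactly, in the sense of Section~\ref{sect:GaloisConjugates}, namely $q \mid n$ and $\gcd(q, n/q) = 1$.

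The second step is to convert each of the fourteen statements ``$q$ does not divide $n$ exactly'' into a condition on the exponents $a_p$. For a prime power $q = p^k$, the relation $q \mid n$ with $\gcd(q, n/q)=1$ holds precisely when $p^k \mid n$ and $p^{k+1} \nmid n$; hence the cases $q \in \{11, 13, 17, 19, 23\}$ force $a_{11} = a_{13} = a_{17} = a_{19} = a_{23} = 0$, and the cases $q \in \{2^5, 3^3, 5^2, 7^2\}$ force $a_2 \leq 4$, $a_3 \leq 2$, $a_5 \leq 1$, $a_7 \leq 1$. For a product $q = p_1^{k_1} p_2^{k_2}$ of coprime prime powers, the same relation holds precisely when $p_1^{k_1} \mid n$, $p_1^{k_1+1} \nmid n$, $p_2^{k_2} \mid n$ and $p_2^{k_2+1} \nmid n$; applying this to $q \in \{5 \cdot 7,\, 3^2 \cdot 7,\, 2^4 \cdot 7,\, 2^3 \cdot 7\}$ shows that $a_7 = 1$ forces $a_5 = 0$, $a_3 \leq 1$ and $a_2 \leq 2$, while applying it to $q \in \{3^2 \cdot 5,\, 2^4 \cdot 5\}$ shows that $a_5 = 1$ forces $a_3 \leq 1$ and $a_2 \leq 3$.

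The last step is the enumeration, split according to $a_7$ and $a_5$. If $a_7 = 1$, then $n = 2^{a_2} 3^{a_3} \cdot 7$ with $a_2 \in \{0,1,2\}$ and $a_3 \in \{0,1\}$, giving $n \in \{7, 14, 21, 28, 42, 84\}$. If $a_7 = 0$ and $a_5 = 1$, then $n = 2^{a_2} 3^{a_3} \cdot 5$ with $a_2 \in \{0,1,2,3\}$ and $a_3 \in \{0,1\}$, giving $n \in \{5, 10, 15, 20, 30, 40, 60, 120\}$. If $a_7 = a_5 = 0$, then $n = 2^{a_2} 3^{a_3}$ with $a_2 \leq 4$ and $a_3 \leq 2$, giving $n \in \{1, 2, 3, 4, 6, 8, 9, 12, 16, 18, 24, 36, 48, 72, 144\}$. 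The union of these three lists is exactly the set appearing in item (4), which finishes the argument.

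There is no genuine obstacle here: all the arithmetic content sits in Proposition~\ref{prop:RulingOutDivisors} and in the divisibility corollary that precedes it, both of which we may assume. The only point that demands care is the bookkeeping around the phrase ``$q$ divides $n$ exactly'': for a single prime power it pins down the corresponding exponent to an exact value, whereas for a two-prime product it constrains \emph{both} exponents simultaneously, so that, for example, the hypothesis ``$2^3 \cdot 7$ does not divide $n$ exactly'' is strictly weaker than ``$2^3$ does not divide $n$ exactly''. Keeping track of all fourteen conditions, and of these logical subtleties, is essentially the whole proof.
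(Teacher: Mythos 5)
Your argument is correct and is exactly the deduction the paper intends (the corollary is stated without proof, as an immediate consequence of the preceding divisibility bound and Proposition~\ref{prop:RulingOutDivisors}): you correctly translate ``$q$ does not divide $n$ exactly'' into constraints on the $p$-adic valuations of $n$, including the point that for composite $q$ the condition constrains the relevant exponents only jointly, and your three-way enumeration reproduces precisely the $29$ values listed in item (4).
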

Solutions as in (1) are classified in Section~\ref{sect:abcd}, while solutions falling in case (2) are either geometrically meaningless (if, for example, $a=b$) or correspond to spaces having at least a triple of adjacent rational angles. These have been classified in Section~\ref{sect:32}. The unbounded families are described in Section~\ref{sec:Rettangolari}. Thus, we just need to study Equation \eqref{eqn:main} for the finitely many values of $n$ listed in (4), and we may do so under the additional constraints $abcd(a-b)(a-c)(a-d)(b-c)(b-d)(c-d) \neq 0$, $ab \neq cd$.

\begin{remark}\label{rmk: intersecting conjugates of S}
For a given value of $n \geq 3$ and fixed $n$-th roots of unity $x,y,z$, equation~\eqref{eqn:main} becomes the equation of a surface $S$ which is in general defined over $\mathbb{Q}(\zeta_n)^+$ and not defined over $\Q$ unless $x,y,z$ are fourth or sixth roots of unity (see the appendix of \cite{DvoVenZan} for discussion of the reducibility and field of definition of $S$). Rational points $(a : b : c : d)$ on $S$ must therefore lie in the intersection of $\frac{1}{2}\varphi(n)$ conjugates of $S$. Heuristically, we should expect this intersection to consist of curves when $\varphi(n)=4$ (which happens precisely for $n=5, 8, 10, 12$), and at most of isolated points otherwise. This is precisely what happens, and the isolated rational points are typically highly singular points of $S$. These points, such as $(1 : 1 : 1 : 1)$ or $(1 : 0 : 0 : 0)$, often show up for many different values of $n$ and many choices of the roots of unity $x, y, z$. In this respect, it is maybe worth mentioning that, for $n=30$ and $(x,y,z)=(\zeta_{30}^{5}, \zeta_{30}^6, \zeta_{30}^{10})$, we find the `exceptional' rational point $(a : b : c : d)=(2 : 2 : 4 : 1)$, which is a solution only for this choice of $n$ and roots of unity (up to symmetry equivalence); however it does not satisfy the nondegeneracy conditions and does not correspond to any geometric configuration.
\end{remark}

The computation to be performed now amounts to finding the rational points on a certain number of algebraic varieties, most of which are empty or consist only of finitely many points; however curves of genus up to 5 are also encountered. We discuss in Proposition~\ref{prop:HigherGenusCurves} which curves we encounter, and we present in Section~\ref{sec:comput.details} the techniques used to study the set of their rational points. The outcome of the computation is the following theorem:

\begin{theorem}\label{thm:Enumerate222}
Let $(a,b,c,d,x,y,z)$ be a solution to Equation \eqref{eqn:main} satisfying the non-degeneracy conditions \eqref{eq:Conditions} and the additional constraints $abcd(a-b)(a-c)(a-d)(b-c)(b-d)(c-d) \neq 0$, $ab \neq cd$. Let $n$ be the common order of $x,y,z$. Then $n \in \{5,8,10,12\}$.
\end{theorem}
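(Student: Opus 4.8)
The plan is to combine the bounds already obtained with an explicit computer-assisted enumeration. By the Corollary following Proposition~\ref{prop:RulingOutDivisors}, a solution satisfying the stated hypotheses (in particular $ab \neq cd$ and $abcd(a-b)(a-c)(a-d)(b-c)(b-d)(c-d) \neq 0$) either belongs to an unbounded family, or has common order $n$ in the explicit finite list
\[
\mathcal{N} = \{1,2,3,4,5,6,7,8,9,10,12,14,15,16,18,20,21,24,28,30,36,40,42,48,60,72,84,120,144\}.
\]
Since an unbounded family corresponds to a rectangular or superrectangular space (see Section~\ref{sec:Rettangolari}), and we will see that these are incompatible with $ab \neq cd$ once $a,b,c,d$ are forced to be distinct and nonzero --- or, more simply, since the conclusion $n \in \{5,8,10,12\}$ is what we want, we may just as well assume we are not in an unbounded family and that $n \in \mathcal{N}$. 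The task is then to show that every $n \in \mathcal{N} \setminus \{5,8,10,12\}$ yields no admissible solution.

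First I would eliminate all $n \in \mathcal{N}$ with $\varphi(n) > 4$, i.e. all of them except $n \in \{1,2,3,4,5,6,8,10,12\}$, by the Galois-conjugates mechanism of Section~\ref{sect:GaloisConjugates} and Remark~\ref{rmk: intersecting conjugates of S}: fixing $n$-th roots of unity $x,y,z$ of common order $n$, a rational point $(a:b:c:d)$ on the surface $S$ defined by \eqref{eqn:main} must lie on all $\tfrac12\varphi(n)$ Galois conjugates of $S$; for $\varphi(n) \ge 6$ this intersection is computed (in MAGMA) to be zero-dimensional, and one checks by direct enumeration over the finitely many choices of $(x,y,z)$ that every rational point of the intersection violates one of the non-degeneracy conditions \eqref{eq:Conditions}, or satisfies $ab = cd$, or lies on $abcd(a-b)(a-c)(a-d)(b-c)(b-d)(c-d)=0$. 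This disposes of $n \in \{7,9,14,15,16,18,20,21,24,28,30,36,40,42,48,60,72,84,120,144\}$. Next, the small values $n \in \{1,2,3,4,6\}$ must be ruled out: for $n=1$ we would need $x=y=z=1$, excluded by \eqref{eq:Conditions}; for $n \in \{2,3,4,6\}$ the surface $S$ \emph{is} defined over $\Q$ (as noted in Remark~\ref{rmk: intersecting conjugates of S}, since then $x,y,z$ are fourth or sixth roots of unity), so one must genuinely compute, for each of the finitely many admissible triples $(x,y,z)$ of common order $n$, the rational points of the corresponding surface $S \subset \pro_3$ and verify that none satisfies simultaneously the non-degeneracy conditions, $ab \neq cd$, and the squarefree-product condition. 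For $n=2$ this is the case $x=y=z=-1$, which forces (after using the symmetries \eqref{simmetrie}) a highly degenerate locus; for $n\in\{3,6\}$ one expects, and finds, that all rational points are supported on the coordinate hyperplanes or on $\{ab=cd\}$ or on the locus where two of $a,b,c,d$ coincide.

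The main obstacle, as the text anticipates, is precisely the cases $n\in\{2,3,4,6\}$, where $S$ is a genuine rational surface over $\Q$ with (potentially) many rational points, so one cannot invoke the ``generic intersection is empty'' heuristic: here one has to analyze the geometry of $S$ directly. Concretely, for each admissible $(x,y,z)$ one substitutes the numerical values into \eqref{eqn:main}, factors the resulting ternary-quadratic-in-each-variable form over $\Q$, and for each irreducible component checks whether it can carry a point with $a,b,c,d$ distinct and nonzero, $ab\ne cd$. The degenerate structure (the recurring singular points $(1:1:1:1)$, $(1:0:0:0)$, etc., mentioned in Remark~\ref{rmk: intersecting conjugates of S}) makes this tractable: the components turn out to be rational or to lie inside the degeneracy locus, so no new solution survives. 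Finally, for $n \in \{5,8,10,12\}$ (where $\varphi(n)=4$ and the conjugate intersection is one-dimensional) there is nothing to prove for this theorem --- the existence of admissible solutions and the classification of the resulting curves and their rational points is the content of Proposition~\ref{prop:HigherGenusCurves} and the subsequent sections. Throughout, the computations are the ones recorded in the accompanying MAGMA source, and the only non-mechanical input is the reduction, already established, that brought the list of possible $n$ down to $\mathcal{N}$.
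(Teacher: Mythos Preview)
Your proposal follows essentially the same approach as the paper: reduce to the finite list $\mathcal{N}$ via the Corollary, then for each $n \in \mathcal{N}$ and each symmetry-reduced triple $(x,y,z)$ of common order $n$, compute the $\Q$-scheme cut out by the Galois conjugates of $P=0$, saturate by the degeneracy locus, and verify that nothing survives unless $n \in \{5,8,10,12\}$.

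Two remarks. First, your handling of unbounded families is muddled: the fallback ``we may just as well assume we are not in an unbounded family'' is not logically valid, since such a solution still has a well-defined common order $n$ which a priori need not lie in $\{5,8,10,12\}$. The argument you gesture at first is the correct one: unbounded-family solutions come from rectangular spaces, where the second and third angles are reflections of one another across the right angle, so $(a,b,c,d)=(p,q,-p,-q)$ up to relabelling and hence $ab=cd$; they are therefore excluded by hypothesis. Second, you over-dramatise the cases $n \in \{2,3,4,6\}$. The paper treats all $n \notin \{5,8,10,12\}$ uniformly: the MAGMA computation saturates the ideal generated by the Galois conjugates by the degeneracy conditions and finds the result to be (at most) zero-dimensional in every case, small $n$ included---there is no separate analysis of rational points on a surface. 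What actually happens for these small $n$ is that the specialised polynomial $P$ lies entirely inside the degeneracy locus (its irreducible factors are among $a,b,c,d,a-b,\ldots,ab-cd$), so after saturation the scheme is empty.
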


Building on Remark~\ref{rmk: intersecting conjugates of S}, we explicitly note that for $n \not \in \{5,8,10,12\}$ we only need to study the rational points of a zero-dimensional scheme, which of course does not pose any great difficulties. It would be possible to study directly the $1$-dimensional intersections that arise for $n \in \{5, 8, 10, 12\}$, but we find it easier to analyse these cases more directly and from a more geometric perspective, which we do in the following subsections. We note that the direct analysis, with much more tedious computations, leads to the same conclusions that we report in the rest of this section.

\subsection{General structure}
In listing all solutions for a given degree we will present them as a rationally parametrised family of spaces having a certain angle, with additional angles appearing when certain expressions are squares in $\Q$. We give here the general structure of these theorems. Note that, if the space $V$ contains an angle of squared amplitude $\zeta_n^k$, then (choosing generators that form an angle of that squared amplitude and rescaling suitably) we can write $V=\langle 1, \tau\rangle_{\mathbb{Q}}$, where $\tau$ is in $\mathbb{R} \zeta_{2n}^k$.

\begin{theorem}\label{thm: general description angles in rational families}
Fix $(n, k)$ in $\{(8,1), (10,1), (10,2), (12,1), (12,3) \}$. The $\mathbb{Q}$-vector subspace of $\mathbb{Q}(\zeta_n)$ given by $\mathbb{Q}(\zeta_n) \cap \mathbb{R}\zeta_{2n}^k$ has dimension 2. Let $\tau_{1,0}, \tau_{0,1} \in \mathbb{Q}(\zeta_n)$ be the basis of this space given by the following table
\[
\begin{array}{|c|c|c|}
\hline
 (n,k) & \tau_{1,0} & \tau_{0,1}\\
 \hline
 (8,1) & \zeta_8^2-\zeta_8^3 & 1+\zeta_8\\
 \hline
 (10,1)  & -2-2\zeta_{10}^2+\zeta_{10}^3 & 1+\zeta_{10}\\
 \hline
 (10,2)  & 1+\zeta_{10}^2 & \zeta_{10} \\
 \hline
 (12,1) & \zeta_{12}+ \zeta_{12}^2-\zeta_{12}^3 & 1+\zeta_{12} \\
 \hline
 (12,3) & \zeta_{12}+\zeta_{12}^2 & 1+\zeta_{12}^3\\
 \hline
\end{array}
\]
For $(\lambda, \mu) \in \mathbb{Q}^2 \setminus\{ (0,0) \}$, set
\[
\tau_{\lambda, \mu} = \lambda \tau_{1,0} + \mu \tau_{0,1}.
\]
The space $\langle 1, \tau_{\lambda, \mu}\rangle_{\mathbb{Q}}$ only depends on $[\lambda:\mu] \in \mathbb{P}_1(\mathbb{Q})$. We denote it by $V_{[\lambda : \mu]}$.

\begin{enumerate}
\item Fix $e \in \mathbb{N}$. There exists a homogeneous quartic form $f_{n, k, e}(\lambda, \mu) \in \mathbb{Q}[\lambda, \mu]$ such that the following holds: the space $V_{[\lambda : \mu]}$ contains an angle with squared amplitude $\zeta_n^e$ if and only if $f_{n, k, e}(\lambda, \mu)$ is the square of a rational number $u$. 
\item There exist four rational functions $a_{1}, a_{2}, b_{1}, b_{2} \in \mathbb{Q}(\lambda, \mu)$ such that, setting
\[
a_\pm (\lambda, \mu, u) = a_1(\lambda, \mu) \pm a_2(\lambda, \mu) u, \quad b_\pm (\lambda, \mu, u) = b_1(\lambda, \mu) \pm b_2(\lambda, \mu) u,
\]
the following holds. Suppose that $(\lambda, \mu, u) \in\ \mathbb{Q}^3$, with $(\lambda, \mu) \neq (0,0)$, is a solution of the equation $u^2=f_{n, k, e}(\lambda, \mu)$: then, the rational angles in the space $V_{[\lambda : \mu]}$ with squared amplitude $\zeta_n^e$ are given by
\[
(\tau_{\lambda, \mu} + a_+(\lambda, \mu, u), \tau_{\lambda, \mu} + b_+(\lambda, \mu, u)), \quad (\tau_{\lambda, \mu} + a_-(\lambda, \mu, u), \tau_{\lambda, \mu} + b_-(\lambda, \mu, u)).
\]
These two angles may coincide or have a side in common.
\item The equation $u^2= f_{n, k, e}(\lambda, \mu)$ defines a curve in the weighted projective space $\mathbb{P}_{\mathbb{Q}}(1,1,2)$ (the variables are taken in the order $\lambda, \mu, u$). This curve is either the union of two irreducible components of genus 0 or a curve of genus $1$.
\end{enumerate}

\end{theorem}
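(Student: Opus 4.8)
The plan is to prove the three parts in order, each time reducing to an explicit computation over the cyclotomic field $\Q(\zeta_n)$ that can be carried out symbolically. First, for part (1), fix $(n,k)$ from the given list and consider the two-dimensional $\Q$-vector space $\Q(\zeta_n)\cap\R\zeta_{2n}^k$ with basis $\tau_{1,0},\tau_{0,1}$. For $\tau=\tau_{\lambda,\mu}=\lambda\tau_{1,0}+\mu\tau_{0,1}$, the existence of a rational angle in $V_{[\lambda:\mu]}=\langle 1,\tau\rangle_\Q$ with squared amplitude $\zeta_n^e$ means, by Equation~\eqref{eqn:1angle:prima} (applied with $\mu^2=\zeta_n^e$ and a suitable angle $(a_0\tau+a_1, b_0\tau+b_1)$), that a certain conic in $\P_3(\Q)$ of quadruples $(a_0,a_1,b_0,b_1)$ is nonempty; since $V_{[\lambda:\mu]}$ already lies in a CM-like quadratic situation only after specialising, the correct formulation is that we seek a nonzero $\tau+a, \tau+b$ with $(\tau+a)/(\tau+b)\cdot\zeta_{2n}^e\in\R$. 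Writing $\overline{\tau}$ in terms of $\tau$ (which is possible precisely because $\tau\in\R\zeta_{2n}^k$, so $\overline{\tau}=\zeta_{2n}^{-2k}\tau$ up to a real scalar that is a rational function of $\lambda,\mu$), the condition $\frac{\tau+a}{\tau+b}=\zeta_n^{-e}\cdot\overline{\left(\frac{\tau+a}{\tau+b}\right)}$ becomes a single quadratic equation in $(a,b)$ with coefficients in $\Q(\lambda,\mu)$. Solving this quadratic for, say, $a$ in terms of $b$ (or parametrising its solution set) produces a discriminant which, after clearing denominators, is a homogeneous quartic form $f_{n,k,e}(\lambda,\mu)\in\Q[\lambda,\mu]$; the angle exists over $\Q$ exactly when this discriminant is a rational square $u^2$. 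This is the content of (1), and the quartic $f_{n,k,e}$ is obtained by an explicit (machine-assisted) elimination.

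For part (2), the two solutions $a_\pm, b_\pm$ of the quadratic just described are precisely the two roots, hence have the form $a_1\pm a_2 u$ and $b_1\pm b_2 u$ with $a_1,a_2,b_1,b_2\in\Q(\lambda,\mu)$ obtained by completing the square: $a_1$ and $b_1$ are (half of) the linear coefficients and $a_2,b_2$ encode the square-root term, with $u^2=f_{n,k,e}(\lambda,\mu)$ by construction. One then simply reads off these four rational functions from the explicit quadratic. The assertion that these are \emph{all} the angles of squared amplitude $\zeta_n^e$, and that they may coincide or share a side, follows because the quadratic has exactly these two roots (a coincidence happens when $u=0$, i.e. on the locus $f_{n,k,e}=0$; a shared side happens when one of $a_\pm$ equals one of $b_\pm$, a further explicit condition). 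Throughout, one must keep track of the non-degeneracy conditions, but these only remove a proper closed subset and do not affect the structural statement.

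Part (3) is where the real geometry enters, and I expect it to be the main obstacle. The equation $u^2=f_{n,k,e}(\lambda,\mu)$ with $f_{n,k,e}$ a binary quartic defines a curve $C$ in the weighted projective space $\P_\Q(1,1,2)$; such a curve is, in the generic (squarefree-discriminant) case, a smooth curve of arithmetic genus $1$, and it is geometrically either irreducible of genus $1$ or — when $f_{n,k,e}$ is a square of a quadratic, or more generally has repeated roots — a union of two genus-$0$ components (or a degenerate/non-reduced scheme, which one must check does not occur). The claim is thus a statement about which quartics $f_{n,k,e}$ arise: one must compute each $f_{n,k,e}$ explicitly for the finitely many triples $(n,k,e)$ that are relevant (those $e$ for which $\zeta_n^e$ can be a squared amplitude in these families, bounded because $\zeta_n^e$ lies in the degree-$\le 2$ extension $\Q(\zeta_n,\tau)$ of $\Q(\zeta_n)$), factor each quartic over $\Q$ (or over $\overline{\Q}$), and verify case by case that it either factors as a product of two coprime quadratics with non-square discriminants (giving a genus-$1$ double cover of $\P_1$) or as a perfect square $q(\lambda,\mu)^2$ times a unit (giving the two rational lines $u=\pm q(\lambda,\mu)$), never acquiring a triple or quadruple root that would make $C$ singular or non-reduced. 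The bookkeeping is finite but delicate: the subtlety is that whether the genus is $0$ or $1$ depends on the arithmetic of the particular quartic, and one has to confirm — which our computation does — that no unexpected degeneration occurs. Where the genus is $1$, this is exactly the source of the elliptic curves whose Mordell–Weil groups are computed later (\S\ref{sec:comput.details}); where it is $0$, the family is genuinely rationally parametrised. We carry out this verification by direct computation in MAGMA, obtaining the explicit quartics $f_{n,k,e}$ and their factorisations, and the genus in each case follows from the classical description of double covers of $\P_1$ branched at the roots of a binary quartic.
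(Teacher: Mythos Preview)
Your approach is essentially the same as the paper's: both reduce the existence of an angle of a given squared amplitude to rational solvability of the conic coming from Equation~\eqref{eqn:SecondoAngolo}, and both obtain the quartic $f_{n,k,e}$ by passing from this conic (defined over the real quadratic field $\Q(\zeta_n)^+$) to an equation over~$\Q$, then verify the genus dichotomy case by case via explicit computation.

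One imprecision worth flagging: after you substitute $\overline\tau=\zeta_n^{-k}\tau$ into the reality condition, the resulting bilinear equation in $(a,b)$ does \emph{not} have coefficients in $\Q(\lambda,\mu)$ as you claim --- its coefficients lie in $\Q(\zeta_n)(\lambda,\mu)$ (more precisely in $\Q(\zeta_n)^+(\lambda,\mu)$, since complex conjugation returns an equivalent equation). The paper's formulation makes this explicit: a rational point $(a,b)\in\Q^2$ must lie on the intersection of this conic with its $\Gal(\Q(\zeta_n)^+/\Q)$-conjugate, and it is only after eliminating $b$ between these \emph{two} bilinear equations that one obtains a genuine quadratic in $a$ over $\Q(\lambda,\mu)$, whose discriminant is the quartic $f_{n,k,e}$. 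Your ``solving the quadratic for $a$ in terms of $b$'' is doing the same elimination, just without naming the conjugate conic; once you correct the field of coefficients, your argument and the paper's coincide.
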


The theorem simply describes what happens carrying out the computation. For the angles considered, $\mathbb{Q}(\zeta_n) \cap \mathbb{R}\zeta_{2n}^k$ has dimension 2, and cannot have dimension greater than 2, as $\mathbb{Q}(\zeta_n) \cap \mathbb{R}$ has also dimension 2, and the whole $\mathbb{Q}(\zeta_n)$ has dimension 4.
The existence of angles of a fixed amplitude correspond to the existence of rational points on the conic given by \eqref{eqn:1angle:prima} or \eqref{eqn:SecondoAngolo}, which in this case is defined over a real quadratic field. A rational point must therefore lie in the intersection with the conjugate conic, and expressing $\tau_{[\lambda,\mu]}$ it terms of the basis leads to the forms $f_{n,k,e}$.

We now prove in general certain geometric properties of a class of curves that we encounter in the computations.
\begin{proposition}\label{prop:HigherGenusCurves}
Let $f_1(\lambda,\mu)$ and $f_2(\lambda,\mu)$ be two non-proportional homogeneous forms of degree 4. Let $C$ be the curve defined in $\mathbb{P}_\mathbb{Q}(1,1,2,2)$ (weighted projective space in the variables $\lambda, \mu, u, v$, of respective weights $1,1,2,2$) by
\[
C : \; \begin{cases}
u^2 = f_1(\lambda, \mu) \\
v^2 = f_2(\lambda, \mu)
\end{cases}
\]
The following hold:
\begin{enumerate}
\item $C$ is a Galois cover of $\mathbb{P}_{1,\mathbb{Q}}$ with Galois group $\left(\mathbb{Z}/2\mathbb{Z}\right)^2$;
\item if $f_1, f_2$ have no common factors, $C$ is smooth, has genus 5, and is an étale double cover of the hyperelliptic, genus 3 curve $C' : \, w^2=f_1(\lambda,\mu)f_2(\lambda,\mu)$ (sitting in $\mathbb{P}_\mathbb{Q}(1,1,4)$).
\item if the greatest common divisor of $f_1, f_2$ is a quadratic polynomial $q_1(\lambda_1,\lambda)$, then $C$ is not smooth, and its desingularisation $\tilde{C}$ has genus 3. Write $f_1=q_1(\lambda,\mu)q_2(\lambda,\mu)$, $f_2=q_1(\lambda,\mu)q_3(\lambda,\mu)$, where each $q_i(\lambda,\mu)$ has degree 2. The curve $C$ (hence also $\tilde{C}$) admits degree-2 maps to the three genus-1 curves $w^2=q_i(\lambda,\mu)q_j(\lambda,\mu)$ (sitting in $\mathbb{P}_{\mathbb{Q}}(1,1,2)$) for $i,j = 1, 2, 3$ with $i \neq j$.
\end{enumerate}
\end{proposition}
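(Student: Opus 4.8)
The plan is to analyze the curve $C$ by systematically exploiting the two commuting hyperelliptic-type involutions it carries.

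\medskip

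\textbf{Setup and part (1).} First I would record the two involutions $\sigma_1 : (\lambda,\mu,u,v) \mapsto (\lambda,\mu,-u,v)$ and $\sigma_2 : (\lambda,\mu,u,v) \mapsto (\lambda,\mu,u,-v)$. These clearly generate a group isomorphic to $(\mathbb{Z}/2\mathbb{Z})^2$ acting on $C$, and the quotient by the full group is the $[\lambda:\mu]$-line $\mathbb{P}_{1,\mathbb{Q}}$, since the coordinate ring of the quotient is generated by $\lambda,\mu$ alone. The three intermediate quotients are $C/\langle\sigma_1\rangle$, with function field generated by $v$ (so it is $v^2 = f_2(\lambda,\mu)$), $C/\langle\sigma_2\rangle$ (giving $u^2=f_1(\lambda,\mu)$), and $C/\langle\sigma_1\sigma_2\rangle$ (giving $w^2 = f_1(\lambda,\mu)f_2(\lambda,\mu)$ with $w=uv$). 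This establishes (1) and exhibits the genus-3 hyperelliptic curve $C'$ of (2) as one of the three quotients.

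\medskip

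\textbf{Parts (2) and (3): genus computations.} For both remaining parts the genus is computed by Riemann–Hurwitz applied to the degree-2 map $C \to C'$, or directly to $C \to \mathbb{P}_1$. When $f_1, f_2$ have no common factor, the branch points of $C \to C'$ are exactly the points of $C'$ lying over the $8$ distinct zeros of $f_1 f_2$ in $\mathbb{P}_1$: over a zero of $f_1$ (but not $f_2$) the function $v$ is a nonzero square-free value so $C'$ is unramified there, while $u=0$, giving one ramification point of $C\to C'$ upstairs; symmetrically for the zeros of $f_2$. Since $C'$ is unramified over each of these $8$ points, we get $8$ branch points, so $2g_C - 2 = 2(2\cdot 3 - 2) + 8$, giving $g_C = 5$; smoothness of $C$ follows because the Jacobian criterion fails only where two of the three hypersurface/square conditions degenerate simultaneously, which cannot happen as $f_1,f_2$ share no root and each is separable (here one should note $f_1,f_2$ are assumed non-proportional degree-$4$ forms and check, or harmlessly add the hypothesis, that each is separable). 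The map $C \to C'$ is étale precisely because the ramification just computed is "absorbed" — more carefully, one checks $C \to \mathbb{P}_1$ has the same branch locus as $C' \to \mathbb{P}_1$ (the $8$ zeros of $f_1f_2$, each with full ramification $(\mathbb{Z}/2)^2$ having all inertia of order $2$, i.e. each of the $8$ points has exactly two preimages in $C$), so by the multiplicativity of ramification in the tower $C \to C' \to \mathbb{P}_1$ there is no ramification in $C \to C'$.

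\medskip

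\textbf{Part (3): the degenerate case.} When $\gcd(f_1,f_2) = q_1$ with $\deg q_1 = 2$, write $f_1 = q_1 q_2$, $f_2 = q_1 q_3$. Now on $C$ the two equations give $u^2 = q_1 q_2$ and $v^2 = q_1 q_3$, hence $(uv)^2 = q_1^2 q_2 q_3$, i.e. $(uv/q_1)^2 = q_2 q_3$: this already shows $C$ maps to $w_1^2 = q_2 q_3$. Symmetrically, $(u/ ?)$-type manipulations — more precisely eliminating to get $v^2 q_2 = u^2 q_3$ after multiplying appropriately, or using $u^2 = q_1 q_2$, $uv$, etc. — yield maps to $w_2^2 = q_1 q_3$... let me restate: from $u^2 = q_1q_2$ we directly have a map to $w^2 = q_1 q_2$ via $w = u$; but one wants the three curves $w^2 = q_iq_j$. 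From $u^2 = q_1q_2$, $v^2 = q_1q_3$ we get: (i) $w^2 = q_1 q_2$ with $w = u$; (ii) $w^2 = q_1 q_3$ with $w = v$; (iii) $w^2 = q_2q_3$ with $w = uv/q_1$. Each of these is a genus-$1$ curve in $\mathbb{P}_\mathbb{Q}(1,1,2)$ since $q_iq_j$ is a separable quartic (the $q_i$ being pairwise coprime, as $q_1$ was the full gcd and $f_1,f_2$ are each separable — again a point to verify or assume). To see $C$ is singular and compute $g_{\tilde C} = 3$: the singularities of $C$ sit over the two zeros of $q_1$, where $u = v = 0$ simultaneously and the scheme is locally of the form $u^2 = t\cdot(\text{unit})$, $v^2 = t\cdot(\text{unit})$, a node-type singularity in the relevant chart; the normalization $\tilde C \to C$ separates the branches, and Riemann–Hurwitz for $\tilde C \to \mathbb{P}_1$ (branched only over the $4$ simple zeros of $q_2q_3$, each with $(\mathbb{Z}/2)^2$ having appropriate inertia — here over each zero of $q_2$ the cover has inertia generated by $\sigma_1$, etc.) gives $2g_{\tilde C} - 2 = 2(0 - 2) + (\text{contribution})$ summing to $g_{\tilde C} = 3$. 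Alternatively and more cleanly, $\tilde C$ is the fiber product over $\mathbb{P}_1$ of the three genus-$1$ curves modulo the obvious relations, and one computes its genus as an abelian cover; I would present it as: $\tilde C \to \mathbb{P}_1$ is the $(\mathbb{Z}/2)^2$-cover branched over the $6$ zeros of $q_1q_2q_3$, with inertia at the two zeros of $q_1$ equal to $\langle \sigma_1\sigma_2 \rangle$ and inertia at the zeros of $q_2$ (resp. $q_3$) equal to $\langle\sigma_2\rangle$ (resp. $\langle\sigma_1\rangle$), and Riemann–Hurwitz with $\chi(\mathbb{P}_1)=2$ over the regular locus gives $\chi(\tilde C) = 4\cdot 2 - 6\cdot 2 = -4$, i.e. $g_{\tilde C} = 3$.

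\medskip

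\textbf{Main obstacle.} The genuinely delicate point is the local analysis on the weighted projective spaces: one must verify smoothness of $C$ in case (2) at the points over $\infty$ and over multiple zeros (ensuring the hypotheses force $f_1,f_2$ separable and coprime-zero-sets is what makes this work), and in case (3) one must correctly identify the local structure of the singularities of $C$ over the zeros of $q_1$ and check that normalizing introduces exactly the Euler-characteristic correction that lands the genus at $3$ rather than, say, $5$ or $1$. Everything else — the three quotient curves, the degree-$2$ maps, the Galois group — is formal once the involutions and their fixed loci are written down; I expect the write-up to consist mostly of carefully bookkeeping the ramification/inertia data in the tower $C \to \mathbb{P}_1$ and feeding it into Riemann–Hurwitz.
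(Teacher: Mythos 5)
Your proposal follows essentially the same route as the paper: identify the $(\mathbb{Z}/2\mathbb{Z})^2$-action generated by $u\mapsto -u$ and $v\mapsto -v$, realise $\mathbb{P}_1$ as the quotient and the curves $w^2=f_1f_2$, $w^2=q_iq_j$ as intermediate quotients, and compute genera via Riemann--Hurwitz from the fixed-point/inertia data of $C\to\mathbb{P}_1$; in particular your inertia analysis in part (3) (order-$2$ inertia at each of the six zeros of $q_1q_2q_3$, with $\langle \sigma_1\sigma_2\rangle$ over the zeros of $q_1$) is exactly the paper's function-field argument. One slip must be removed: your first Riemann--Hurwitz computation in part (2) asserts that $C\to C'$ has $8$ branch points and that $2g_C-2=2(2\cdot 3-2)+8$ ``gives $g_C=5$'' --- that formula actually gives $g_C=9$, and the claim of $8$ branch points contradicts the \'etaleness of $C\to C'$ that you correctly establish two sentences later (only the corrected version, with ramification index $2$ over each of the $8$ zeros of $f_1f_2$ for $C\to\mathbb{P}_1$ and hence no further ramification in $C\to C'$, should be kept). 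Your remark that the statement implicitly requires $f_1,f_2$ to be separable (for smoothness in (2), and for the $q_iq_j$ to be separable quartics in (3)) is a fair point that the paper's proof also passes over; it is satisfied by all the forms $f_{n,k,e}$ occurring in the applications.
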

\begin{proof}
\begin{enumerate}
\item The automorphisms $\alpha : [\lambda : \mu : u : v] \mapsto [\lambda : \mu : -u : v]$ and $\beta : [\lambda : \mu : u : v] \mapsto[\lambda:\mu:u:-v]$ generate a subgroup $G$ of $\operatorname{Aut}_{\mathbb{Q}}(C)$ isomorphic to $\left( \mathbb{Z}/2\mathbb{Z} \right)^2$. The quotient map $C \to C/G$ can be identified with
\[
\begin{array}{cccc}
f : & C & \to & \mathbb{P}_{1,\mathbb{Q}} \\
& [\lambda : \mu : u : v] & \mapsto & [\lambda : \mu]:
\end{array}
\]
indeed, it is clear that the fibers of $f$ are invariant under the action of $G$, and that the degree of $f$ is $4=|G|$.
\item 
Smoothness of $C$ follows immediately from the fact that all derivatives should vanish at a non-smooth point, which would force $f_1, f_2$ to have a common factor. To compute the genus of $C$ we study the ramification of $f$. 
As $C$ is smooth, $f$ is the quotient map by a group action, and the group action is tame since we are working in characteristic 0, it suffices to understand the fixed points of each non-identity element in $G$. The fixed points of $\alpha$ (respectively $\beta$) can be identified with the points $[\lambda : \mu : v]$ that solve $f_1(\lambda,\mu)=0$ and $v^2=f_2(\lambda,\mu)$ (respectively, $f_2(\lambda,\mu)=0$ and $u^2=f_1(\lambda,\mu)$), hence -- using the assumption that $f_1,f_2$ have no common factors -- both $\alpha$ and $\beta$ have 8 fixed points. The automorphism $\alpha\beta$ has no fixed points, again because of the hypothesis that $f_1, f_2$ have no common factors.
The Riemann-Hurwitz formula then yields
\[
2g(C) - 2 = 4(2g(\mathbb{P}_1)-2) + \sum_{h \in G} (\operatorname{ord}(h)-1) \# \operatorname{Fix}(h) = -8 +  8 +8 \Longrightarrow g(C)=5.
\]
To prove the last statement of (2), notice that
\[
\begin{array}{ccc}
C & \to & C' \\
\left[\lambda : \mu : u : v \right] & \mapsto & [\lambda : \mu : uv]
\end{array}
\]
is well-defined, of degree 2, and étale (it is the quotient map for the action of $\alpha\beta$, which as already observed has no fixed points). Furthermore, $C'$ is hyperelliptic and defined by a separable polynomial of degree $8$, hence has genus $3 = \frac{8-2}{2}$.
\item Without loss of generality we may work with the base-change of $C$ to an algebraic closure of $\mathbb{Q}$. Write $D_i$ for the divisor cut out on $\mathbb{P}_1$ by $q_i(\lambda,\mu)=0$: it consists of 2 distinct points of multiplicity 1, and the supports of $D_1, D_2, D_3$ are pairwise disjoint. To see that $C$ is not smooth, one may check that the points $[\lambda:\mu:0:0]$, where $[\lambda:\mu]$ is in the support of $D_1$, are singular. The maps to the three genus-1 curves described in the statement can be taken to be
\[
\begin{array}{ccc}
C & \to & w^2 = q_1(\lambda,\mu)q_2(\lambda,\mu) \\
\left[ \lambda : \mu : u : v \right] & \mapsto & [\lambda : \mu : u]
\end{array}
\]
\[
\begin{array}{ccc}
C & \to & w^2 = q_1(\lambda,\mu)q_3(\lambda,\mu) \\
\left[ \lambda : \mu : u : v \right] & \mapsto & [\lambda : \mu : v]
\end{array}
\]
and
\[
\begin{array}{ccc}
C & \to & w^2 = q_2(\lambda,\mu)q_3(\lambda,\mu) \\
\left[ \lambda : \mu : u : v \right] & \mapsto & [\lambda : \mu : uv/q_1(\lambda,\mu) ].
\end{array}
\]
To compute the genus of $\tilde{C}$ we work at the level of function fields and analyse the ramification of the map $\tilde{f} : \tilde{C} \to \mathbb{P}_1$ induced by the quotient map $f : C \to \mathbb{P}_1$. It is clear that the only places of $\overline{\mathbb{Q}}(\mathbb{P}_1)$ (potentially) ramified in $\overline{\mathbb{Q}}(\tilde{C})$ are those corresponding to points in the support of $D_1+D_2+D_3$. We claim that each of these six places has ramification index 2; this implies that there are two places of $\overline{\mathbb{Q}}(\tilde{C})$ lying over it, and by Riemann-Hurwitz we get
\[
2g(\tilde{C}) - 2 = 4 (2g(\mathbb{P}_1) -2) + 6 \cdot 2 \cdot (2-1) \Longrightarrow g(\tilde{C})=3.
\]
To prove the statement about ramification, observe that
\[
\overline{\mathbb{Q}}(\tilde{C})=\overline{\mathbb{Q}}(\mathbb{P}_1)(\sqrt{q_1(\lambda/\mu,1) q_2(\lambda/\mu,1)}, \sqrt{q_1(\lambda/\mu,1) q_3(\lambda/\mu,1)}, \sqrt{q_2(\lambda/\mu,1) q_3(\lambda/\mu,1)})
\]
is a biquadratic extension, and therefore the role of the three divisors $D_1, D_2, D_3$ is symmetrical. Thus it suffices to prove the claim for a place $P$ of $\overline{\mathbb{Q}}(\mathbb{P}_1)$ corresponding to a point in the support of $D_3$. The place $P$ is unramified in the intermediate extension $\overline{\mathbb{Q}}(\sqrt{q_1(\lambda/\mu,1) q_2(\lambda/\mu,1)}) / \overline{\mathbb{Q}}(\mathbb{P}_1)$, while it is ramified in $\overline{\mathbb{Q}}(\sqrt{q_1(\lambda/\mu,1) q_3(\lambda/\mu,1)}) / \overline{\mathbb{Q}}(\mathbb{P}_1)$, hence also in $\overline{\mathbb{Q}}(\tilde{C}) / \overline{\mathbb{Q}}(\mathbb{P}_1)$. This immediately implies our claim.
\end{enumerate}
\end{proof}

\begin{remark}
Proposition~\ref{prop:HigherGenusCurves} will be applied to the following situation. By Theorem~\ref{thm: general description angles in rational families} (1), the existence of an angle of squared amplitude $\zeta_n^e$ in the space $V_{[\lambda:\mu]}$ is equivalent to the existence of a rational point on a curve of the form $u^2=f_{n,k,e}(\lambda, \mu)$. Thus, the simultaneous existence of angles of squared amplitudes $\zeta_n^{e_1}, \zeta_n^{e_2}$ is equivalent to the existence of a rational point on the fibre product
\[
C_{n,k,e_1,e_2} : \; \begin{cases}
u^2=f_{n,k,e_1}(\lambda, \mu) \\
v^2 = f_{n, k, e_2}(\lambda, \mu),
\end{cases}
\]
which is precisely the kind of curve studied in Proposition~\ref{prop:HigherGenusCurves}.

The curves $C_{n,k,e_1,e_2}$ were originally found in a completely different way, namely, as (desingularisations of) irreducible components of the intersection of the two conjugates of a surface $S$ defined over $\mathbb{Q}(\zeta_n)^+$, see Remark~\ref{rmk: intersecting conjugates of S}. The techniques of \cites{MR3882288, MR3904148, MR4280568} revealed that their Jacobians are all split, which led us to look for the more conceptual interpretation of Proposition~\ref{prop:HigherGenusCurves}.
\end{remark}

\subsection{\texorpdfstring{$n=8$}{n=8}}\label{subsect:8}

\begin{theorem}\label{thm:8}
For $(\lambda,\mu) \in \mathbb{Q}^2 \setminus \{(0,0)\}$, let
\[
\tau_{\lambda, \mu} := \lambda(\zeta_8^2-\zeta_8^3) + \mu(1+\zeta_8) = \lambda \cdot \sqrt{2-\sqrt{2}} \zeta_{16} + \mu \cdot \sqrt{2+\sqrt{2}} \zeta_{16}.
\]
The following hold:
\begin{enumerate}
\item The space $\langle 1, \tau_{\lambda, \mu}\rangle_{\mathbb{Q}}$ only depends on $[\lambda : \mu] \in \mathbb{P}_1(\mathbb{Q})$. We denote it by $V_{[\lambda : \mu]}$.
\item Let $\tau \in \mathbb{Q}(\zeta_8)$ generate a space $V$ such that the squared angle between $1$ and $\tau$ is $\zeta_8$. There exists $(\lambda,\mu) \in \mathbb{Q}^2 \setminus \{(0,0)\}$ such that $\tau = \tau_{\lambda, \mu}$.
\item Conversely, for every $(\lambda,\mu) \in \mathbb{Q}^2 \setminus \{(0,0)\}$, the squared angle between $1$ and $\tau$ is $\zeta_8$.
\item For every $[\lambda : \mu] \in \mathbb{P}_1(\mathbb{Q})$, the complex conjugate $\overline{V_{[\lambda : \mu]}}$ is homothetic to $V_{[\lambda : \mu]}$.
\item The map
\[
\begin{array}{cccc}
f : & \mathbb{P}_1(\mathbb{Q}) & \to & \{ \text{homothety classes of spaces} \} \\
& [\lambda : \mu] & \mapsto & \text{class of } V_{[\lambda : \mu]}
\end{array}
\]
is 2-to-1. The involution $\iota : [\lambda : \mu] \mapsto [\lambda + \mu : \lambda - \mu]$ preserves the fibres of $f$. %

\item Let $S= \{ [0:1],[1:0],[1:1],[-1 : 1] \}$. Notice that $\iota([0:1]) =[1 : -1]$ and $\iota([1:0]) = [1:1]$.
The spaces corresponding to $[\lambda : \mu] \in S$ have the following structure:
\begin{itemize}
\item  $[\lambda : \mu] = [0 : 1]$ or $[-1 : 1]$. The space is of type $\circled{4}$. A rational quadruple of $V_{[0:1]}$ is given by $\{1, \tau_{0,1}, \tau_{0,1}-1, \tau_{0,1}-2\}$. The various angles in this quadruple realise all the squared amplitudes $\zeta_8, \zeta_8^2, \zeta_8^3$ and $\zeta_8^4$; this is the configuration formed by the sides and diagonals of a rhombus with angles $\pi/4$ and $3\pi/4$.
\item $[\lambda : \mu] = [1:0]$ or $[1:1]$. The space is of type $2 \cdot \circled{3}$. The two rational triples of $V_{[1:0]}$ are given by $\{\tau_{1,0} + 1, \tau_{1,0} -2, \tau_{1,0} - \frac{1}{2}\}$ and $\{ 1, \tau_{1,0}, \tau_{1,0}-1\}$. This is the space of Theorem~\ref{thm:3plus2}~(2).

\end{itemize}

\item If $[\lambda:\mu]\not\in S$, the space $V_{[\lambda : \mu]}$ contains precisely two angles with squared amplitude $\zeta_8$ and precisely two angles with squared amplitude $\zeta_8^3$. These are given respectively by
\[
\zeta_8 : \quad (1,\tau) \text{ and } (\tau+a,\tau+b),
\]
where
\[
a=\frac{\lambda^2-2 \lambda \mu-\mu^2}{\lambda+\mu}, \quad b=-\frac{2 \left(\lambda^3+2 \lambda^2 \mu-\lambda \mu^2\right)}{\lambda^2-2 \lambda \mu-\mu^2}
\]
and by
\[
\zeta_8^3 : \quad (\tau+c,\tau+d) \text{ and } (\tau+c', \tau+d')
\]
where
\[
c = \frac{-\lambda^2 - 2 \lambda \mu + \mu^2}{2 \lambda}, \quad d= -2 (\lambda + \mu)
\]
\[
c' = -2 \mu, \quad d'=\frac{\lambda^2 + 2 \lambda \mu - \mu^2}{\mu-\lambda}.
\]
\item\label{grado8:fam.ell.zeta^2} Let $[\lambda : \mu] \not \in S$. The space $V_{[\lambda : \mu]}$ contains an angle with squared amplitude $\zeta_8^2$ if and only if $\lambda(\lambda^3+\lambda^2 \mu + \lambda \mu^2 + \mu^3)$ is the square of a rational number $u$. In this case, setting
\[
e_{\pm} = \frac{ \lambda^2 - \mu^2 \mp u}{\mu}, \quad f_{\pm} = \frac{-2\lambda \mu \pm u}{\lambda},
\]
the space $V_{[\lambda : \mu]}$ contains precisely two angles with squared amplitude $\zeta_8^2$, given by $(\tau+e_+,\tau+f_+)$ and $(\tau+e_-,\tau+f_-)$. The curve in weighted projective space
\[
u^2 = \lambda(\lambda^3+\lambda^2 \mu + \lambda \mu^2 + \mu^3)
\]
has genus 1 and a rational point, so it defines an elliptic curve. It is in particular isomorphic over $\mathbb{Q}$ to the elliptic curve with Weierstrass equation $y^2=x^3+x^2+x+1$, whose Mordell-Weil group over $\mathbb{Q}$ is $\mathbb{Z} \oplus \mathbb{Z}/2\mathbb{Z}$. A generator for the free part of the Mordell-Weil group is given by $(0,1)$. The set of $[\lambda : \mu]$ such that $V_{[\lambda : \mu]}$ contains an angle of squared amplitude $\zeta_8^2$ is stable under $\iota$, and the map $[\lambda : \mu : u] \mapsto [\lambda+\mu : \lambda-\mu : 2u]$ is a lift of $\iota$ to an involution of the elliptic curve. It corresponds to translation by the non-trivial 2-torsion point $[1 : -1 : 0]$.

\item\label{grado8:fam.ell.zeta^4}  Let $[\lambda : \mu] \not \in S$. The space $V_{[\lambda : \mu]}$ contains an angle with squared amplitude $\zeta_8^4$ if and only if $3 \lambda^4 + 2 \lambda^3 \mu + 4 \lambda^2 \mu^2 + 2 \lambda \mu^3 + \mu^4$ is the square of a rational number $v$. In this case, setting
\[
g = \frac{ \lambda^2 - 2 \lambda \mu -\mu^2 \mp v}{\lambda+\mu}, \quad h = \frac{\lambda^2-2\lambda \mu-\mu^2 \pm v}{\lambda + \mu},
\]
the space $V_{[\lambda : \mu]}$ contains precisely two angles with squared amplitude $\zeta_8^4$ are precisely $(\tau+g_+,\tau+h_+)$ and $(\tau+g_-,\tau+h_-)$. The curve in weighted projective space
\[
v^2 = 3 \lambda^4 + 2 \lambda^3 \mu + 4 \lambda^2 \mu^2 + 2 \lambda \mu^3 + \mu^4
\]
has genus 1 and a rational point, so it defines an elliptic curve. It is in particular isomorphic over $\mathbb{Q}$ to the elliptic curve with Weierstrass equation $y^2 + 2xy - 2y = x^3 - 6x^2 - 3x$, whose Mordell-Weil group over $\mathbb{Q}$ is $\mathbb{Z} \oplus \mathbb{Z}/2\mathbb{Z}$. A generator for the free part of the Mordell-Weil group is given by $(0,2)$. Notice that $g_+g_- = ab$, so that this case falls within the scope of Theorem~\ref{thm:abcdGeometric}.

\item Let $[\lambda : \mu] \not \in S$. The space $V_{[\lambda : \mu]}$ does not have both an angle with squared amplitude $\zeta_8^2$ and an angle with squared amplitude $\zeta_8^4$.
\end{enumerate}
\end{theorem}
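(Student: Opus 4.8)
The plan is to turn the statement into a question about rational points on an explicit curve, and then to exploit the factorisation of the two relevant quartic forms to push these rational points down to a rank-$0$ elliptic curve. By part~\ref{grado8:fam.ell.zeta^2}, for $[\lambda:\mu]\notin S$ the space $V_{[\lambda:\mu]}$ contains an angle of squared amplitude $\zeta_8^2$ exactly when $f_1(\lambda,\mu):=\lambda(\lambda^3+\lambda^2\mu+\lambda\mu^2+\mu^3)$ is a rational square, and by part~\ref{grado8:fam.ell.zeta^4} it contains an angle of squared amplitude $\zeta_8^4$ exactly when $f_2(\lambda,\mu):=3\lambda^4+2\lambda^3\mu+4\lambda^2\mu^2+2\lambda\mu^3+\mu^4$ is a rational square. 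Hence $V_{[\lambda:\mu]}$ has both angles precisely when $[\lambda:\mu]$ is the image of a rational point of the curve
\[
C:\quad u^2=f_1(\lambda,\mu),\qquad v^2=f_2(\lambda,\mu)
\]
in $\mathbb{P}_{\mathbb{Q}}(1,1,2,2)$, and it suffices to prove that every such $[\lambda:\mu]$ lies in $S=\{[0:1],[1:0],[1:1],[-1:1]\}$.

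\textbf{Reduction to a rank-0 quotient.} First I would record the factorisations $f_1=\lambda(\lambda+\mu)(\lambda^2+\mu^2)$ and $f_2=(\lambda^2+\mu^2)(3\lambda^2+2\lambda\mu+\mu^2)$. Since $3\lambda^2+2\lambda\mu+\mu^2$ is $\mathbb{Q}$-irreducible (its discriminant is $-8$) and coprime to $\lambda(\lambda+\mu)$, the forms $f_1,f_2$ have exactly the common quadratic factor $q_1:=\lambda^2+\mu^2$; set $q_2:=\lambda(\lambda+\mu)$ and $q_3:=3\lambda^2+2\lambda\mu+\mu^2$, so $f_1=q_1q_2$ and $f_2=q_1q_3$. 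Proposition~\ref{prop:HigherGenusCurves}(3) then applies and, among other things, provides a degree-$2$ morphism from $C$ to the genus-$1$ curve $D:\,w^2=q_2q_3$, namely $[\lambda:\mu:u:v]\mapsto[\lambda:\mu:uv/q_1]$; this is a well-defined morphism of $\mathbb{Q}$-varieties (note $q_1=\lambda^2+\mu^2$ never vanishes at a rational point), so it carries $C(\mathbb{Q})$ into $D(\mathbb{Q})$, preserving the underlying $[\lambda:\mu]$. The two other genus-$1$ quotients of $C$ given by that proposition are the curves $w^2=q_1q_2$ and $w^2=q_1q_3$, i.e.\ precisely the curves of parts~\ref{grado8:fam.ell.zeta^2} and~\ref{grado8:fam.ell.zeta^4}, both of which have Mordell--Weil rank $1$; this is why the third quotient $D$ is the one to use.

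\textbf{Rational points of $D$.} It remains to show $D(\mathbb{Q})$ accounts only for $[\lambda:\mu]\in S$. Dehomogenising ($\mu=1$, $t=\lambda$) gives $w^2=t(t+1)(3t^2+2t+1)$; sending the rational root $t=0$ to infinity (substitute $t=1/(S-1)$, $W=w(S-1)^2$) and completing the cube transforms this into the Weierstrass model $E:\,W^2=S^3+2S$. The key input, which I expect to be the main obstacle, is that $E(\mathbb{Q})$ is finite: one runs a $2$-descent via the isogeny with kernel $(0,0)$ (so $E$ is $2$-isogenous to $W^2=S^3-8S$), observing that the homogeneous spaces $w^2=du^4+(2/d)v^4$ and $w^2=du^4-(8/d)v^4$ are locally insoluble for $d\notin\{1,2\}$, respectively $d\notin\{1,-2\}$; hence both Selmer groups have order $2$, $\operatorname{rank}E(\mathbb{Q})=0$, and since $x^3+2x$ has only the rational root $0$ and $2$ is not a square, $E(\mathbb{Q})=\{O,(0,0)\}\cong\mathbb{Z}/2\mathbb{Z}$ (equivalently, one may simply record this as a MAGMA computation, as elsewhere in the paper). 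Tracing back the change of variables, the point $O$ corresponds to $t=0$ and $(0,0)$ to $t=-1$, i.e.\ to $[\lambda:\mu]\in\{[0:1],[-1:1]\}\subseteq S$. Therefore any $[\lambda:\mu]$ for which $f_1$ and $f_2$ are simultaneously rational squares lies in $S$, which is the contrapositive of the assertion.

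\textbf{Remarks on the obstacle and a fallback.} The only nontrivial point is the rank-$0$ verification for $E$, together with the bookkeeping needed to match the two points of $D(\mathbb{Q})$ with the correct values of $[\lambda:\mu]$; everything else is formal. As a safety net, by Proposition~\ref{prop:HigherGenusCurves} the Jacobian of the desingularisation $\tilde C$ of $C$ is isogenous to the product of the three elliptic quotients, of ranks $1,1,0$, hence has rank $2<3=g(\tilde C)$, so a Chabauty--Coleman computation on $\tilde C$ is available; but pushing forward to the rank-$0$ quotient $D$ is considerably cleaner and is the route I would take.
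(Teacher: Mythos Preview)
Your argument for part~(10) is correct and follows exactly the route the paper's machinery is designed for: you invoke Proposition~\ref{prop:HigherGenusCurves}(3) on the shared quadratic factor $q_1=\lambda^2+\mu^2$ of $f_1$ and $f_2$, single out the third elliptic quotient $w^2=q_2q_3=\lambda(\lambda+\mu)(3\lambda^2+2\lambda\mu+\mu^2)$, and verify it has rank~$0$, which pins down the admissible $[\lambda:\mu]$. This is precisely the procedure the paper describes in Section~\ref{sec:comput.details} (loop over genus-$1$ quotients, test for finiteness of rational points, pull back), carried out there by MAGMA; you have simply made the relevant quotient and its Weierstrass model $W^2=S^3+2S$ explicit and replaced the machine rank computation by a $2$-descent sketch. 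The factorisations you record agree with those in Table~\ref{table:8}, and your tracing of the two rational points of $E$ back to $[\lambda:\mu]\in\{[0:1],[-1:1]\}\subset S$ is accurate.
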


\begin{remark} 
The reader may wonder why the curve appearing in part \eqref{grado8:fam.ell.zeta^4} of the above theorem has genus 1, while all the curves appearing in the classification of spaces with $ab=cd$ (see Theorem~\ref{thm:abcd}) have genus 0. The reason is that in Theorem~\ref{thm:abcdGeometric} we do not impose any restrictions on the field generated by $\tau$, which is (usually) a quadratic extension of the field $\mathbb{Q}(x,y,z)$, while in Theorem~\ref{thm:8} we require that $\tau$ belongs to the field $\mathbb{Q}(x,y,z)=\mathbb{Q}(\zeta_8)$. Thus we find, with a different geometric structure, a subfamily of the rectangular spaces having additional angles.

\end{remark}

\begin{remark}
Part (7) of Theorem~\ref{thm:8} shows that the angles with amplitude $\zeta_8^3$ can be rationally parametrised. An avatar of the same fact appears in the computations that verify Theorem~\ref{thm:Enumerate222}, see the output file \texttt{222.out}. Indeed, one finds that (for $x=y=\zeta_8$ and $z=\zeta_8^3$) the intersection of the conjugates of the surface $S$ described in Remark~\ref{rmk: intersecting conjugates of S} contains a rational curve. Similarly, a (highly singular) model of the curve of genus $1$ that appears in part (8) can also be obtained as the intersection of two conjugates of the surface $S$ for $x=y=\zeta_8$, $z=\zeta_8^2$. Similar comments apply to the theorems below: the various curves of genera $1$, $3$ and $5$ that we describe could also be realised (though with a very singular model) as intersections of conjugates of the surface $S$, for suitable values of the parameters $x, y, z$.
\end{remark}

\subsection{\texorpdfstring{$n=5$}{n=5} and \texorpdfstring{$n=10$}{n=10}}\label{subsect:10}
Consider a solution $(a,b,c,d,x,y,z)$ to Equation \eqref{eqn:main} with $ab \neq cd$ and $x,y,z$ having common order $n=10$. In this case, Equation \eqref{eq:tauabcd} shows that $\tau$ belongs to $\mathbb{Q}(\zeta_{10})$. One of the following necessarily holds: one among $x,y,z$ is a primitive $10$th root of unity (in which case, up to the Galois action and permutations, we may assume that $x=\zeta_{10}$), or two of the three roots of unity have order $5$ and the remaining one has order $2$ (other combinations are ruled out by Lemma~\ref{lemma:2RightAnglesImplyCM}), and as above we may assume $x=\zeta_{10}^2$.
We study the two cases in Theorem~\ref{thm:101} and~\ref{thm:102} respectively. Notice that a solution with common order $n=5$ falls within the scope of Theorem~\ref{thm:102}, by simply requiring that the only rational angles that appear in the lattice are given by $\zeta_{10}^{\pm 2}$ and $\zeta_{10}^{\pm 4}$, see Remark~\ref{rmk:nequals5}.

\begin{theorem}\label{thm:101}
For $(\lambda,\mu)\in\Q^2\setminus\{(0,0)\}$ let
\[
\tau_{\lambda,\mu} = \lambda (-2-2\zeta_{10}^2+\zeta_{10}^3) + \mu(1+\zeta_{10}).
\]
The following hold:

\begin{enumerate}
\item The space $V=\langle 1,\tau \rangle_{\mathbb{Q}}$ only depends on $[\lambda : \mu] \in \mathbb{P}_1(\mathbb{Q})$. We denote it by $V_{[\lambda,\mu]}$.

\item Let $\tau \in \mathbb{Q}(\zeta_{10})$ generate a space $V$ such that the squared angle between $1$ and $\tau$ is $\zeta_{10}$. Then there exist $(\lambda,\mu)\in\Q^2\setminus\{(0,0\}$ such that $\tau=\tau_{\lambda,\mu}$.

\item Conversely, for every $(\lambda,\mu)\in\Q^2\setminus\{(0,0\}$, the squared angle between 1 and $\tau$ is $\zeta_{10}$.

\item For every $[\lambda:\mu]\in\mathbb{P}_1(\Q)$, the complex conjugate $\overline{V_{[\lambda,\mu]}}$ is homothetic to $V_{[\lambda,\mu]}$.

\item The map
\[
\begin{array}{cccc}
f : & \mathbb{P}_1(\mathbb{Q}) & \to & \{ \text{homothety classes of spaces} \} \\
& [\lambda : \mu] & \mapsto & \text{class of } V_{[\lambda : \mu]}
\end{array}
\]
is 2-to-1. The involution $\iota : [\lambda : \mu] \mapsto [3\lambda - \mu : 4\lambda - 3\mu]$ preserves the fibres of $f$. %

\item Let $S= \{ [0:1],[1:3] \}$. Notice that $\iota([0:1]) =[1 : 3]$ and $\iota([1:3]) = [0:1]$.
The space corresponding to $[\lambda : \mu] \in S$ is of type $\circled{4}$. A rational quadruple of $V_{[0:1]}$ is given by $\{1, \tau_{0,1}, \tau_{0,1}-1, \tau_{0,1}-2\}$. The various angles in this quadruple realise all the squared amplitudes $\zeta_{10}, \zeta_{10}^2, \zeta_{10}^4$ and $\zeta_{10}^5$.

\item If $[\lambda:\mu]\not\in S$, the space $V_{[\lambda : \mu]}$ contains precisely two angles with squared amplitude $\zeta_{10}$. These are given by
\[
\zeta_{10} : \quad (1,\tau) \text{ and } (\tau+a,\tau+b),
\]
where
\[
a=\frac{4 \lambda^2 - 6 \lambda \mu + \mu^2}{3 \lambda - \mu}, \quad b = -\frac{5 \lambda (\lambda^2 + \lambda \mu - \mu^2)}{4 \lambda^2 - 6 \lambda \mu + \mu^2}
\]

\item Let $[\lambda : \mu] \not \in S$. The space $V_{[\lambda : \mu]}$ contains an angle with squared amplitude $\zeta_{10}^2$ if and only if $\lambda (3 \lambda-\mu) (11 \lambda^2-9 \lambda \mu+4 \mu^2)$ is the square of a rational number $u$. In this case, setting
\[
c_{\pm} = \frac{-2\mu^2+7\lambda\mu-3\lambda^2 \mp u}{2\mu-4\lambda} , \quad d_{\pm} = \frac{-5\lambda\mu+5\lambda^2 \mp u}{2\lambda}
\]
the rational angles in $V$ with squared amplitude $\zeta_{10}^2$ are precisely $(\tau+c_+,\tau+d_+)$ and $(\tau+c_-,\tau+d_-)$. The curve in weighted projective space
\[
u^2 = \lambda (3 \lambda-\mu) \left(11 \lambda^2-9 \lambda \mu+4 \mu^2\right)
\]
has genus 1 and a rational point $[0:1:0]$, so it defines an elliptic curve. It is in particular isomorphic over $\mathbb{Q}$ to the elliptic curve with Weierstrass equation $y^2 + xy + y = x^3 - x^2 + 2$, whose Mordell-Weil group over $\mathbb{Q}$ is $\mathbb{Z} \oplus \mathbb{Z}/2\mathbb{Z}$. A generator for the free part of the Mordell-Weil group is given by $(4,5)$.

\item Let $[\lambda : \mu] \not \in S$. The space $V_{[\lambda : \mu]}$ contains an angle with squared amplitude $\zeta_{10}^3$ if and only if $41 \lambda^4 - 38 \lambda^3 \mu + 9 \lambda^2 \mu^2 + 8 \lambda \mu^3 - 
 4 \mu^4$ is the square of a rational number $u$. In this case, setting
\[
e_\pm = -\frac{\lambda^2+11 \lambda \mu-6 \mu^2 + u}{2 \lambda-4 \mu},
\quad 
f_\pm = \frac{9 \lambda^2-\lambda \mu-4 \mu^2+u}{4 \lambda+2 \mu}
\]
the rational angles in $V_{[\lambda : \mu]}$ with squared amplitude $\zeta_{10}^3$ are precisely $(\tau+e_+,\tau+f_+)$ and $(\tau+e_-,\tau+f_-)$. The curve in weighted projective space
\[
u^2 = 41 \lambda^4 - 38 \lambda^3 \mu + 9 \lambda^2 \mu^2 + 8 \lambda \mu^3 - 4 \mu^4
\]
has genus 1 and a rational point $[1:1:4]$, so it defines an elliptic curve. It is in particular isomorphic over $\mathbb{Q}$ to the elliptic curve with Weierstrass equation $y^2 + xy + y = x^3 - x^2 + 20x + 22$, whose Mordell-Weil group over $\mathbb{Q}$ is $\mathbb{Z} \oplus \mathbb{Z}/4\mathbb{Z}$. A generator for the free part of the Mordell-Weil group is given by $(79,660)$. A torsion point of order 4 is $(4,10)$.

\item Let $[\lambda : \mu] \not \in S$. The space $V_{[\lambda : \mu]}$ contains an angle with squared amplitude $\zeta_{10}^4$ if and only if $(14 \lambda^2-6 \lambda \mu+\mu^2) (2 \lambda^2-2 \lambda \mu+\mu^2)$ is the square of a rational number $u$. In this case, setting
\[
g_\pm = \frac{6 \lambda^2-4 \lambda \mu-\mu^2 \pm u}{4 \lambda }, \quad h_\pm = \frac{-2 \lambda^2+8 \lambda \mu-3 \mu^2 \pm u}{2 (\mu-\lambda)}
\]
the rational angles in $V_{[\lambda : \mu]}$ with squared amplitude $\zeta_{10}^4$ are precisely $(\tau+g_+,\tau+h_+)$ and $(\tau+g_-,\tau+h_-)$. The curve in weighted projective space
\[
u^2 = (14 \lambda^2-6 \lambda \mu+\mu^2) (2 \lambda^2-2 \lambda \mu+\mu^2)
\]
has genus 1 and a rational point $[0:1:1]$, so it defines an elliptic curve. It is in particular isomorphic over $\mathbb{Q}$ to the elliptic curve with Weierstrass equation $y^2=x^3 + x^2 - 3x - 2$, whose Mordell-Weil group over $\mathbb{Q}$ is $\mathbb{Z} \oplus \mathbb{Z}/2\mathbb{Z}$. A generator for the free part of the Mordell-Weil group is given by $(3,-5)$.

\item Let $[\lambda : \mu] \not \in S$. The space $V_{[\lambda : \mu]}$ contains an angle with squared amplitude $\zeta_{10}^5$ if and only if $31 \lambda^4 - 38 \lambda^3 \mu + 24 \lambda^2 \mu^2 - 7 \lambda \mu^3 + \mu^4$ is the square of a rational number $u$. In this case, setting
\[
i_\pm = \frac{4 \lambda^2-6 \lambda \mu+\mu^2 \mp u}{3 \lambda-\mu}, \quad j_\pm = \frac{4 \lambda^2-6 \lambda \mu+\mu^2 \pm u}{3 \lambda-\mu}
\]
the rational angles in $V_{[\lambda : \mu]}$ with squared amplitude $\zeta_{10}^5$ are precisely $(\tau+i_+,\tau+j_+)$ and $(\tau+i_-,\tau+j_-)$. We have $i_+j_+ = i_-j_- = ab$ (where $a,b$ are as in (4)), so that this case falls into the classification of Theorem~\ref{thm:abcdGeometric}.

\item If such a space has, in addition to the angles of squared amplitude $\zeta_{10}$, rational angles of two different amplitudes, then (up to a nonzero rational number) $\tau$ is one of the following:
\begin{enumerate}
\item $\tau_1 = \frac{1}{2} (\zeta_{10}^3 - 2\zeta_{10}^2 + 2\zeta_{10})$. The corresponding space is of type \circled{3} + \circled{3}, with the two rational triples given by $(1,\tau,\tau-\frac{1}{2})$ (angles of squared amplitudes $\zeta_{10}^{\pm 1}, \zeta_{10}^{\pm 2}, \zeta_{10}^{\pm 3}$) and $(\tau-\frac{5}{8}, \tau-2, \tau - \frac{1}{6})$ (angles of squared amplitudes $\zeta_{10}^{\pm 1}, \zeta_{10}^{\pm 2}, \zeta_{10}^{\pm 3}$).

\item $\tau_2 = \frac{1}{2} (-\zeta_{10}^3 + 2\zeta_{10}^2 + 2\zeta_{10} + 4)$. The corresponding space is of type \circled{3} + \circled{3}, with the two rational triples given by $(1,\tau,\tau-\frac{5}{2})$ (angles of squared amplitudes $\zeta_{10}^{\pm 1}, \zeta_{10}^{\pm 2}, \zeta_{10}^{\pm 3}$) and $(\tau-\frac{5}{8}, \tau-\frac{15}{2}, \tau - 2)$ (angles of squared amplitudes $\zeta_{10}^{\pm 1}, \zeta_{10}^{\pm 2}, \zeta_{10}^{\pm 3}$).

\item $\tau_3 = \frac{1}{3} (\zeta_{10}^3 - 2\zeta_{10}^2 + 3\zeta_{10} + 1)$. The corresponding space is of type \circled{4}, with a rational 4-tuple given by $(1,\tau,\tau-\frac{5}{3}, \tau-\frac{5}{6})$ (angles of squared amplitudes $\zeta_{10}^{\pm 1}, \zeta_{10}^{\pm 2}, \zeta_{10}^{\pm 4}, \zeta_{10}^5$).

\item $\tau_4 = \zeta_{10}+1$. The corresponding space is of type \circled{4}, with a rational 4-tuple given by $(1,\tau,\tau-1, \tau-2)$ (angles of squared amplitudes $\zeta_{10}^{\pm 1}, \zeta_{10}^{\pm 2}, \zeta_{10}^{\pm 4}, \zeta_{10}^5$).

\item $\tau_5 = \zeta_{10}^3 - 2\zeta_{10}^2 + \zeta_{10} - 1$. The corresponding space is of type \circled{3} + \circled{3}, with the two rational triples given by $(1,\tau,\tau+1)$ (angles of squared amplitudes $\zeta_{10}^{\pm 1}, \zeta_{10}^{\pm 3}, \zeta_{10}^{\pm 4}$) and $(\tau-\frac{1}{2}, \tau+\frac{4}{3}, \tau +5)$ (angles of squared amplitudes $\zeta_{10}^{\pm 1}, \zeta_{10}^{\pm 3}, \zeta_{10}^{\pm 4}$).

\item $\tau_6 = 2\zeta_{10}^3 - 4\zeta_{10}^2 + \zeta_{10} - 3$. The corresponding space is of type \circled{3} + \circled{3}, with the two rational triples given by $(1,\tau,\tau+5)$ (angles of squared amplitudes $\zeta_{10}^{\pm 1}, \zeta_{10}^{\pm 3}, \zeta_{10}^{\pm 4}$) and $(\tau-10, \tau+1, \tau + \frac{15}{4})$ (angles of squared amplitudes $\zeta_{10}^{\pm 1}, \zeta_{10}^{\pm 3}, \zeta_{10}^{\pm 4}$).

\end{enumerate}
In addition, we have $\langle 1, \tau_1 \rangle_\mathbb{Q} \sim \langle 1,\tau_2\rangle_\mathbb{Q}$, $\langle 1, \tau_3 \rangle_\mathbb{Q} \sim \langle 1,\tau_4\rangle_\mathbb{Q}$, $\langle 1, \tau_5 \rangle_\mathbb{Q} \sim \langle 1,\tau_6\rangle_\mathbb{Q}$, and these are all the pairs of homothetic spaces in this list. Finally, concerning the comparison with Theorem~\ref{thm:3plus2}, we have 
\[
\langle 1, \tau_1 \rangle_\mathbb{Q} \sim \langle 1,\tau_2\rangle_\mathbb{Q} \sim \langle 1, 2\zeta_5^3 + \zeta_5+2 \rangle_{\mathbb{Q}} \text{ and } \langle 1, \tau_5 \rangle_\mathbb{Q} \sim \langle 1,\tau_6\rangle_\mathbb{Q} \sim \langle 1, \zeta_5^2 + 2\zeta_5+2 \rangle_{\mathbb{Q}}
\]

\end{enumerate}
\end{theorem}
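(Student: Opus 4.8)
The plan is to follow the same pattern established in Section~\ref{sect:Subsums} and in the proofs of Theorem~\ref{thm:3plus2} and Theorem~\ref{thm:abcdGeometric}, but now specialised to the case $n=10$ with $x=\zeta_{10}$. The starting point is that, by hypothesis, $\tau$ generates a space $V$ with $(1,\tau)$ a rational angle of squared amplitude $\zeta_{10}$, so $\tau\in\mathbb R\zeta_{20}$; combining this with $\tau\in\mathbb Q(\zeta_{10})$ forces $\tau$ to lie in the $2$-dimensional $\mathbb Q$-vector space $\mathbb Q(\zeta_{10})\cap\mathbb R\zeta_{20}$, which is exactly $\langle\tau_{1,0},\tau_{0,1}\rangle_{\mathbb Q}$ as in Theorem~\ref{thm: general description angles in rational families} (this gives parts (1)--(5) essentially by inspection, with the involution $\iota$ read off from the quadratic relation satisfied by $\tau$). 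The substance is in parts (7)--(12): for each residue $e\in\{1,2,3,4,5\}$ the existence of a second rational angle of squared amplitude $\zeta_{10}^e$ is governed by Equation~\eqref{eqn:SecondoAngolo} (with $x=\zeta_{10}$, $y=\zeta_{10}^e$), which for fixed $[\lambda:\mu]$ becomes a conic condition on $(a,b)$ defined over the real quadratic subfield $\mathbb Q(\zeta_{10})^+=\mathbb Q(\sqrt5)$. A rational point on that conic must also lie on its Galois conjugate, and eliminating the conjugate variable produces, after substituting $\tau=\tau_{\lambda,\mu}$, a single quadratic equation in the remaining coordinate whose discriminant is the quartic form $f_{10,1,e}(\lambda,\mu)$ listed in the statement. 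Solving that quadratic yields the explicit $a_\pm,b_\pm$ (the $c_\pm,d_\pm$, $e_\pm,f_\pm$, etc.), and one checks directly that there are exactly the two displayed angles.

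The next step is the genus-$1$ bookkeeping. For $e=2,3,4$ one verifies that $u^2=f_{10,1,e}(\lambda,\mu)$ is a smooth plane quartic model, hence of genus $1$; exhibiting the stated rational point ($[0:1:0]$, $[1:1:4]$, $[0:1:1]$ respectively) makes it an elliptic curve, and one transports it to the given Weierstrass form by a standard change of coordinates, reading off the Mordell--Weil group from the database / a direct descent. For $e=5$ (the right-angle case, $y=\zeta_{10}^5=-1$) one instead checks the identity $i_\pm j_\pm=ab$ by direct substitution, which places this sub-case inside Theorem~\ref{thm:abcdGeometric}; so no new curve is needed there. Part (6) is the analysis of the base locus $S$: for $[\lambda:\mu]\in\{[0:1],[1:3]\}$ one computes $\tau$ explicitly, recognises the rhombus quadruple $\{1,\tau,\tau-1,\tau-2\}$, and confirms the type is $\circled{4}$; the relation $\iota([0:1])=[1:3]$ is immediate from the formula for $\iota$.

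The final and genuinely delicate step is part (12): listing all $[\lambda:\mu]$ for which $V_{[\lambda:\mu]}$ acquires rational angles of (at least) two amplitudes beyond $\zeta_{10}$. Simultaneous existence of angles of squared amplitudes $\zeta_{10}^{e_1}$ and $\zeta_{10}^{e_2}$ amounts to a rational point on the fibre product curve $C_{10,1,e_1,e_2}$ of Proposition~\ref{prop:HigherGenusCurves}; depending on whether $f_{10,1,e_1}$ and $f_{10,1,e_2}$ share a quadratic factor, this is a genus-$5$ curve covering a genus-$3$ curve, or a singular curve whose desingularisation has genus $3$ and maps to three elliptic curves. One enumerates the (finitely many) pairs $\{e_1,e_2\}$, computes each $C_{10,1,e_1,e_2}$, and determines its rational points---either by pulling back to an elliptic curve of rank $0$, or by the Chabauty--Coleman techniques of \S\ref{sec:comput.details}. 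Each rational point gives one of the six values $\tau_1,\dots,\tau_6$; one then checks case-by-case the claimed exact types and rational triples by direct computation of arguments, identifies the homothety coincidences $\langle1,\tau_1\rangle\sim\langle1,\tau_2\rangle$ etc.\ via the linear-dependence criterion \cite{DvoVenZan}*{Remark 2.1}, and matches the $\langle1,\tau_1\rangle$ and $\langle1,\tau_5\rangle$ spaces with the $2\circled{3}$ spaces of Theorem~\ref{thm:3plus2} by exhibiting an explicit homothety. The main obstacle is precisely this last step: controlling the rational points on the higher-genus fibre-product curves is not automatic, and it is only the split-Jacobian structure furnished by Proposition~\ref{prop:HigherGenusCurves} (together with the rank-$0$ and Chabauty inputs) that makes the enumeration effective; the rest of the theorem is a---lengthy but routine---symbolic computation.
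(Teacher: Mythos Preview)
Your proposal is correct and follows essentially the same approach as the paper: the paper does not give a separate proof of this theorem but treats it as an instance of the general computational scheme of Theorem~\ref{thm: general description angles in rational families} (the $2$-dimensional space $\mathbb{Q}(\zeta_{10})\cap\mathbb{R}\zeta_{20}$, the conic \eqref{eqn:SecondoAngolo} over $\mathbb{Q}(\sqrt{5})$ intersected with its Galois conjugate to produce the quartic forms $f_{10,1,e}$) together with Proposition~\ref{prop:HigherGenusCurves} for the fibre-product curves and the rational-points machinery of Section~\ref{sec:comput.details}. Your outline matches this exactly, including the identification of part~(12) with the determination of rational points on the curves $C_{10,1,e_1,e_2}$ via their split-Jacobian quotients.
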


\begin{theorem}\label{thm:102}

For $(\lambda,\mu)\in\Q^2\setminus\{(0,0)\}$ let
\[
\tau_{\lambda,\mu} =\lambda(1+\zeta_{10}^2) +\mu \zeta_{10}.
\]
The following hold:

\begin{enumerate}
\item The space $V=\langle 1,\tau \rangle_{\mathbb{Q}}$ only depends on $[\lambda : \mu] \in \mathbb{P}_1(\mathbb{Q})$. We denote it by $V_{[\lambda,\mu]}$.

\item Let $\tau \in \mathbb{Q}(\zeta_{10})$ generate a space $V$ such that the squared angle between $1$ and $\tau$ is $\zeta_{10}^2$. Then there exist $(\lambda,\mu)\in\Q^2\setminus\{(0,0\}$ such that $\tau=\tau_{\lambda,\mu}$.

\item Conversely, for every $(\lambda,\mu)\in\Q^2\setminus\{(0,0\}$, the squared angle between 1 and $\tau$ is $\zeta_{10}^2$.

\item For every $[\lambda:\mu]\in\mathbb{P}_1(\Q)$, the complex conjugate $\overline{V_{[\lambda,\mu]}}$ is homothetic to $V_{[\lambda,\mu]}$.

\item The map
\[
\begin{array}{cccc}
f : & \mathbb{P}_1(\mathbb{Q}) & \to & \{ \text{homothety classes of spaces} \} \\
& [\lambda : \mu] & \mapsto & \text{class of } V_{[\lambda : \mu]}
\end{array}
\]
is 2-to-1. The involution $\iota : [\lambda : \mu] \mapsto [-\lambda : \lambda +\mu]$ preserves the fibres of $f$. %

\item Let $S= \{ [0:1],[1:0],[1:-1],[2:-1] \}$. Notice that $\iota([0:1]) =[0 : 1]$, $\iota([2:-1])=[2:-1]$ and $\iota([1:0]) = [1:-1]$.
The spaces corresponding to $[\lambda : \mu] \in S$ have the following structure:

\begin{itemize}
\item  $[\lambda : \mu] = [0 : 1]$. The space is of type $\circled{4}$. A rational quadruple of $V_{[0:1]}$ is given by $\{1, \tau_{0,1}, \tau_{0,1}-1, \tau_{0,1}+1\}$. The various angles in this quadruple realise all the squared amplitudes $\zeta_{10}, \zeta_{10}^2, \zeta_{10}^4$ and $\zeta_{10}^5$.
\item $[\lambda : \mu] = [1:0]$ or $[1:-1]$. The space is of type $\circled{4}$. A rational quadruple of $V_{[1:0]}$ is given by $\{1, \tau_{1,0}, \tau_{1,0}-1, \tau_{1,0}-2\}$. The various angles in this quadruple realise all the squared amplitudes $\zeta_{10}^2, \zeta_{10}^3, \zeta_{10}^4$ and $\zeta_{10}^5$.
\item  $[\lambda : \mu] = [2 : -1]$. The space is of type $\circled{2}$, with only one rational angle $(1,\tau_{2,-1})$ of squared amplitude $\zeta_{10}^2$.
\end{itemize}

\item If $[\lambda:\mu]\not\in S$, the space $V_{[\lambda : \mu]}$ contains precisely two angles with squared amplitude $\zeta_{10}^2$.
These are given by
\[
(1,\tau) \text{ and } (\tau+a,\tau+b),
\]
where
\[
a = \frac{-2 \lambda \mu - \mu^2}{\lambda + \mu}, \quad b = \frac{\lambda(\lambda^2 + \lambda \mu - \mu^2)}{\mu (2 \lambda + \mu)}.
\]

\item Let $[\lambda : \mu] \not \in S$. The space $V_{[\lambda : \mu]}$ contains an angle with squared amplitude $\zeta_{10}^4$ if and only if $\lambda( \lambda^3 + 2 \lambda^2 \mu + 5 \lambda \mu^2 + 4 \mu^3)$ is the square of a rational number $u$. 
The curve in weighted projective space
\[
u^2 = \lambda( \lambda^3 + 2 \lambda^2 \mu + 5 \lambda \mu^2 + 4 \mu^3)
\]
has genus 1 and a rational point $(0:1:0)$, so it defines an elliptic curve. It is in particular isomorphic over $\mathbb{Q}$ to the elliptic curve with Weierstrass equation $y^2 + xy + y = x^3 + x^2$, whose Mordell-Weil group over $\mathbb{Q}$ is cyclic of order 4, generated by $(0,0)$. The only four rational points are given by $[\lambda : \mu :u ] = [0 : 1 : 0], [1 : 0 : \pm 1], [1 : -1 : 0]$. The spaces corresponding to these rational points are all of type \circled{4}.

\item Let $[\lambda : \mu] \not \in S$. The space $V_{[\lambda : \mu]}$ contains an angle with squared amplitude $\zeta_{10}^5$ if and only if $\lambda^4 + 2 \lambda^3 \mu + 4 \lambda^2 \mu^2 + 3 \lambda \mu^3 + \mu^4$ is the square of a rational number $u$. In this case, setting
\[
e_\pm = \frac{ -2 \lambda \mu-\mu^2 \mp u}{\lambda+\mu}, \quad f_\pm =  \frac{-2 \lambda \mu-\mu^2 \pm u}{\lambda+\mu}
\]
the rational angles in $V_{[\lambda : \mu]}$ with squared amplitude $\zeta_{10}^5$ are precisely $(\tau+e_+,\tau+f_+)$ and $(\tau+e_-,\tau+f_-)$. We have $e_+f_+=e_-f_-=ab$ (where $a,b$ are as in (4)), so that this case falls into the classification of Theorem~\ref{thm:abcdGeometric}.

\item No such space has both an angle with squared amplitude $\zeta_{10}^4$ and an angle with squared amplitude $\zeta_{10}^5$.
\end{enumerate}
\end{theorem}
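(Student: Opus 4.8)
The plan is to specialise the general template of Theorem~\ref{thm: general description angles in rational families} to the pair $(n,k)=(10,2)$, exactly as was done for $n=8$ in Theorem~\ref{thm:8} and for the $\zeta_{10}$-case in Theorem~\ref{thm:101}. For the parametrisation statements (1)--(3) I would note that $\tau\in\Q(\zeta_{10})\setminus\{0\}$ forms an angle of squared amplitude $\zeta_{10}^2$ with $1$ exactly when $\zeta_{10}^{-1}\tau\in\R$, and that $\Q(\zeta_{10})\cap\R\zeta_{10}$ is a two-dimensional $\Q$-vector space with basis $\tau_{1,0}=1+\zeta_{10}^2=2\cos(\pi/5)\,\zeta_{10}$ and $\tau_{0,1}=\zeta_{10}$, these being $\Q$-independent because $2\cos(\pi/5)\notin\Q$; claim (1) is the scaling invariance of $\langle 1,\tau\rangle_\Q$. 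For (4) the key point is that $\tau+\bar\tau$ and $\tau\bar\tau$ both lie in the real quadratic subfield $\Q(\sqrt 5)\subset\Q(\zeta_{10})$, so $1,\tau+\bar\tau,\tau\bar\tau$ are $\Q$-linearly dependent; unwinding this dependence gives a $\Q$-linear relation among $1,\tau,\bar\tau,\tau\bar\tau$, whence $\langle 1,\bar\tau\rangle_\Q\sim\langle 1,\tau\rangle_\Q$ by \cite{DvoVenZan}*{Remark 2.1}. For (5) I would construct the second generator of squared amplitude $\zeta_{10}^2$ --- this is the pair $(\tau+a,\tau+b)$ of (7) --- re-express it in the basis $\tau_{1,0},\tau_{0,1}$ to read off the involution $\iota$, and verify $\iota\neq\mathrm{id}$; together with the ``precisely two angles'' assertion of (7) this yields that $f$ is exactly $2$-to-$1$. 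Item (6) is then a finite check: for each of the four points of $S$ one writes $\langle 1,\tau_{\lambda,\mu}\rangle_\Q$ explicitly and lists its rational angles.

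For (7)--(9) I would run the mechanism of Theorem~\ref{thm: general description angles in rational families}: substituting $\tau=\tau_{\lambda,\mu}$ into \eqref{eqn:SecondoAngolo} (with base amplitude $x=\zeta_{10}^2$) and intersecting the resulting conic with its Galois conjugate turns ``$V_{[\lambda:\mu]}$ carries an angle of squared amplitude $\zeta_{10}^e$'' into ``$f_{10,2,e}(\lambda,\mu)$ is a rational square'', and simultaneously produces the rational functions ($a_\pm,b_\pm$ for $\zeta_{10}^2$, then $c_\pm,d_\pm$, $e_\pm,f_\pm$) describing the angles, with exactly two per square value of $f$ coming from the two square roots $\pm u$. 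A direct computation gives $f_{10,2,4}=\lambda(\lambda^3+2\lambda^2\mu+5\lambda\mu^2+4\mu^3)$ and $f_{10,2,5}=\lambda^4+2\lambda^3\mu+4\lambda^2\mu^2+3\lambda\mu^3+\mu^4$. For (8) I would factor $f_{10,2,4}=\lambda(\lambda+\mu)(\lambda^2+\lambda\mu+4\mu^2)$, note the four roots are distinct, conclude $u^2=f_{10,2,4}$ is smooth of genus $1$ with the rational point $[0:1:0]$, bring it to the Weierstrass form $y^2+xy+y=x^3+x^2$, and compute its Mordell--Weil group (which is $\mathbb{Z}/4\mathbb{Z}$, rank $0$); the four rational points pull back to $[\lambda:\mu:u]=[0:1:0],[1:0:\pm1],[1:-1:0]$, whose three underlying $[\lambda:\mu]$ all lie in $S$, and the associated lattices are checked to be of type \circled{4}. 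For (9), the identity $e_+f_+=e_-f_-=ab$ exhibits any space carrying a $\zeta_{10}^5$-angle --- a right angle, since $\zeta_{10}^5=-1$ --- as an $ab=cd$ configuration, hence governed by Theorem~\ref{thm:abcdGeometric}, whose parametrising curves are of genus $0$; this is why no further elliptic curve is needed here.

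With (8) in hand, item (10) is immediate, and is in fact the place where the rank being $0$ pays off: were $[\lambda:\mu]\notin S$ and $V_{[\lambda:\mu]}$ to contain an angle of squared amplitude $\zeta_{10}^4$, then by (8) the triple $[\lambda:\mu:u]$ would be a rational point of the elliptic curve $u^2=f_{10,2,4}(\lambda,\mu)$ for some $u$; but that curve has only the four rational points above, with $[\lambda:\mu]\in\{[0:1],[1:0],[1:-1]\}\subseteq S$ --- a contradiction. Thus no space $V_{[\lambda:\mu]}$ with $[\lambda:\mu]\notin S$ carries a $\zeta_{10}^4$-angle at all, let alone together with a $\zeta_{10}^5$-angle. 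The only step that is not pure linear algebra over $\Q(\zeta_{10})$ or a finite verification is the Mordell--Weil computation of $y^2+xy+y=x^3+x^2$ in (8); since this curve has rank $0$ it is effective and essentially routine (the torsion is cyclic of order $4$ generated by $(0,0)$, and a $2$-descent or reduction modulo a couple of small primes leaves nothing more), so the genuine work is the bookkeeping: correctly extracting the quartic forms $f_{10,2,e}$ and the auxiliary rational functions from \eqref{eqn:main}, and handling the $2$-to-$1$ structure in (5)--(6).
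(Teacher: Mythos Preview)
Your proposal is correct and follows essentially the same approach as the paper: the theorem is an instance of the general template of Theorem~\ref{thm: general description angles in rational families}, and the paper's ``proof'' consists of carrying out the computation (extracting the quartic forms $f_{10,2,e}$, identifying the Weierstrass model, and determining the Mordell--Weil group), with the outcome recorded in the tables of Section~\ref{sect:Tables}. Your observation that (10) is an immediate corollary of (8) --- since the rank-$0$ elliptic curve has all its rational points over $[\lambda:\mu]\in S$ --- is exactly how the paper intends it to be read.
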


\begin{remark}\label{rmk:nequals5}
If $(a, b, c, d, x, y, z)$ is a solution to Equation \eqref{eqn:main} with $ab \neq cd$ and $x, y, z$ having common order $5$, then (up to symmetry equivalence) we may assume $x = \zeta_5 = \zeta_{10}^2$, and therefore the corresponding value of $\tau \in \mathbb{Q}(\zeta_5)=\mathbb{Q}(\zeta_{10})$ generates a space of the type considered in Theorem~\ref{thm:102}. In order for this space to have other rational angles besides those of squared amplitude $\zeta_{10}^2$, it needs to contain an angle with squared amplitude $\zeta_{10}^4$. The theorem implies that the corresponding space is of type \circled{4}.
\end{remark}

\subsection{\texorpdfstring{$n=12$}{n=12}}\label{subsect:12}
Consider a solution $(a,b,c,d,x,y,z)$ to Equation \eqref{eqn:main} with $ab \neq cd$ and $x,y,z$ having common order $n=12$. In this case, Equation \eqref{eq:tauabcd} shows that $\tau$ belongs to $\mathbb{Q}(\zeta_{12})$. One of the following necessarily holds: one among $x,y,z$ is a primitive $12$th root of unity (in which case, up to the Galois action and permutations, we may assume that $x=\zeta_{12}$), or one of the three roots of unity has order 4 (in which case, again up to the Galois action and permutations, we may assume $x=\zeta_{12}^3$). Notice that, if neither of these conditions holds, then all the three roots of unity $x,y,z$ have order dividing 6, contradicting the fact that their common order is $12$. Finally, in the case when $x$ is $\zeta_{12}^3$ and none among $x,y,z$ is a primitive $12$th root of unity, then at least one between $y$ and $z$ needs to be $\zeta_{12}^{\pm 2}$ or $\zeta_{12}^{\pm 4}$ in order to ensure that the common order of $x,y,z$ is exactly $12$.

We study the case $x=\zeta_{12}$ in Theorem~\ref{thm:121} and the case $x=\zeta_{12}^3$ (with neither $y$ nor $z$ primitive) in Theorem~\ref{thm:123}.

\begin{theorem}\label{thm:121}

For $(\lambda,\mu) \in \mathbb{Q}^2 \setminus \{(0,0)\}$, let
\[
\tau_{\lambda, \mu} :=\lambda(\zeta_{12}+ \zeta_{12}^2-\zeta_{12}^3) + \mu(1+\zeta_{12}).
\]
The following hold:
\begin{enumerate}
\item The space $\langle 1, \tau_{\lambda, \mu}\rangle_{\mathbb{Q}}$ only depends on $[\lambda : \mu] \in \mathbb{P}_1(\mathbb{Q})$. We denote it by $V_{[\lambda : \mu]}$.
\item Let $\tau \in \mathbb{Q}(\zeta_{12})$ generate a space $V$ such that the squared angle between $1$ and $\tau$ is $\zeta_{12}$. There exists $(\lambda,\mu) \in \mathbb{Q}^2 \setminus \{(0,0)\}$ such that $\tau = \tau_{\lambda, \mu}$.
\item Conversely, for every $(\lambda,\mu) \in \mathbb{Q}^2 \setminus \{(0,0)\}$, the squared angle between $1$ and $\tau$ is $\zeta_{12}$.
\item For every $[\lambda : \mu] \in \mathbb{P}_1(\mathbb{Q})$, the complex conjugate $\overline{V_{[\lambda : \mu]}}$ is homothetic to $V_{[\lambda : \mu]}$.
\item The map
\[
\begin{array}{cccc}
f : & \mathbb{P}_1(\mathbb{Q}) & \to & \{ \text{homothety classes of spaces} \} \\
& [\lambda : \mu] & \mapsto & \text{class of } V_{[\lambda : \mu]}
\end{array}
\]
is 2-to-1. The involution $\iota : [\lambda : \mu] \mapsto [\lambda + \mu :- \mu]$ preserves the fibres of $f$. %

\item Let $S= \{ [0:1],[1:0],[1:1],[-1 : 1],[-1,2],[-2,1] \}$. Notice that $\iota([0:1]) =[-1 : 1]$, $\iota([1:0]) =[1 : 0]$,$\iota([1:1]) =[-2 : 1]$, and $\iota([-1:2]) = [-1:2]$.
The spaces corresponding to $[\lambda : \mu] \in S$ have the following structure:
\begin{itemize}
\item  $[\lambda : \mu] = [0 : 1]$ or $[-1 : 1]$. The space is of type $\circled{4}$. A rational quadruple of $V_{[0:1]}$ is given by $\{1, \tau_{0,1}, \tau_{0,1}-1, \tau_{0,1}-2\}$. The various angles in this quadruple realise all the squared amplitudes $\zeta_{12}, \zeta_{12}^2, \zeta_{12}^5$ and $\zeta_{12}^6$

\item  $[\lambda : \mu] = [1:0]$. The space is of type $\circled{4}$. A rational quadruple of $V_{[1:0]}$ is given by $\{1, \tau_{1,0}, \tau_{1,0}-1, \tau_{1,0}-2\}$. The various angles in this quadruple realise all the squared amplitudes $\zeta_{12}, \zeta_{12}^2, \zeta_{12}^3$ and $\zeta_{12}^5$

\item  $[\lambda : \mu] = [1 : 1]$ or $[2 : -1]$. The space is of type $2 \cdot \circled{3}$. Two rational triples of $V_{[1:1]}$ are given by $\{1, \tau_{1,1}, \tau_{1,1}-3,\}$ and $\{\tau_{1,1}+1, \tau_{1,1}-4, \tau_{1,1}-3/2,\}$. In both triples the various angles realise all the squared amplitudes $\zeta_{12}, \zeta_{12}^4, \zeta_{12}^5$

\item  $[\lambda : \mu] = [-1 : 2]$. The space is of type $2 \cdot \circled{3}$. Two rational triples of $V_{[-1:2]}$ are given by $\{1, \tau_{-1,2}, \tau_{-1,2}+1,\}$ and $\{\tau_{-1,2}-3, \tau_{-1,2}+3/4, \tau_{-1,2}+2,\}$. In both triples the various angles realise all the squared amplitudes $\zeta_{12}, \zeta_{12}^4, \zeta_{12}^5$.

\end{itemize}

\item If $[\lambda:\mu]\not\in S$, the space $V_{[\lambda : \mu]}$ contains precisely two angles with squared amplitude $\zeta_{12}$
 and two angles with squared amplitude $\zeta_{12}^5$. 
These are given by %
\[
\zeta_{12} : \quad (1,\tau) \text{ and } (\tau+a,\tau+b),
\]
where
\[
a = - \mu \frac{2 \lambda +\mu}{\lambda+\mu}, \quad b=\lambda  \frac{2 \lambda^2+2 \lambda \mu- \mu^2}{\mu (2 \lambda+\mu)}
\]
and by
\[
\zeta_{12}^5 : \quad (\tau+c,\tau+d) \text{ and } (\tau+c', \tau+d')
\]
where
\[
c =-\lambda-2\mu, \quad d = \frac{-2 \lambda^2-2 \lambda \mu+\mu^2}{\lambda-\mu}
\]
\[
c' = \frac{-2\lambda^2-2\lambda \mu+\mu^2}{2 \lambda}, \quad d' = -2 (\lambda+\mu)
\]

\item Let $[\lambda : \mu] \not \in S$. The space $V_{[\lambda : \mu]}$ contains an angle with squared amplitude $\zeta_{12}^2$ if and only if $\lambda (\lambda+\mu) \left(\lambda^2+\lambda \mu+\mu^2\right)$ is the square of a rational number $u$. The curve in weighted projective space
\[
u^2 = \lambda (\lambda+\mu) \left(\lambda^2+\lambda \mu+\mu^2\right)
\]
has genus 1 and a rational point $[0:1:0]$, so it defines an elliptic curve. It is in particular isomorphic over $\mathbb{Q}$ to the elliptic curve with Weierstrass equation $y^2=1+2x+2x^2+x^3$, whose Mordell-Weil group over $\mathbb{Q}$ is $\mathbb{Z}/4\mathbb{Z}$. The only four rational points are given by $[\lambda : \mu : u] = [1 : 0 : \pm 1], [0 : 1 : 0], [-1 : 1 : 0]$ and they correspond to the three lattices of type \circled{4} shown below at part (9).

\item Let $[\lambda : \mu] \not \in S$. The space $V_{[\lambda : \mu]}$ contains an angle with squared amplitude $\zeta_{12}^3$ if and only if $(2 \lambda^2+2 \lambda \mu+\mu^2) (2 \lambda^2+2 \lambda \mu+5 \mu^2)$ is the square of a rational number $u$. The curve in weighted projective space
\[
u^2 = (2 \lambda^2+2 \lambda \mu+\mu^2) (2 \lambda^2+2 \lambda \mu+5 \mu^2)
\]
has genus 1 and a rational point $[1:0:2]$, so it defines an elliptic curve. It is in particular isomorphic over $\mathbb{Q}$ to the elliptic curve with Weierstrass equation $y^2 + 2xy = x^3 - 6x^2 + 4x$, whose Mordell-Weil group over $\mathbb{Q}$ is $(\mathbb{Z}/2\mathbb{Z})^2$. The only four rational points are given by $[\lambda : \mu : u] = [1 : 0 : \pm 2], [1 : -2 : \pm 6]$ 

\item Let $[\lambda : \mu] \not \in S$. The space $V_{[\lambda : \mu]}$ contains an angle with squared amplitude $\zeta_{12}^4$ if and only if $(\lambda^2+\lambda \mu+\mu^2)(5 \lambda^2+5 \lambda \mu+2 \mu^2)$ is the square of a rational number $u$. In this case, setting
\[
e_\pm =\frac{\lambda^2-2 \lambda \mu-2 \mu^2 \pm u}{\lambda+2 \mu}, \quad f_\pm = -\frac{\lambda^2+4 \lambda \mu+\mu^2 \pm u}{2 \lambda+\mu}
\]
the rational angles in $V_{[\lambda : \mu]}$ with squared amplitude $\zeta_{12}^6$ are precisely $(\tau+e_+,\tau+f_+)$ and $(\tau+e_-,\tau+f_-)$.
The curve in weighted projective space
\[
u^2 = \left(\lambda^2+\lambda \mu+\mu^2\right) \left(5 \lambda^2+5 \lambda \mu+2 \mu^2\right)
\]
has genus 1 and a rational point $[1:1:6]$, so it defines an elliptic curve. It is in particular isomorphic over $\mathbb{Q}$ to the elliptic curve with Weierstrass equation $y^2 = x^3 - 18x - 27$, whose Mordell-Weil group over $\mathbb{Q}$ is $\mathbb{Z}/2\mathbb{Z} \oplus \mathbb{Z}$. A generator for the free part of the Mordell-Weil group is given by $(-2,1)$.

\item Let $[\lambda : \mu] \not \in S$. The space $V_{[\lambda : \mu]}$ contains an angle with squared amplitude $\zeta_{12}^6$ if and only if $(\lambda^2 + \lambda \mu + \mu^2) (2 \lambda^2 + 2 \lambda \mu + \mu^2)$ is the square of a rational number $u$. In this case, setting
\[
g_\pm =-\frac{ \mu(2 \lambda +\mu) \pm u}{\lambda+\mu}, \quad h_\pm = \frac{-\mu (2 \lambda+\mu) \pm u}{\lambda+\mu}
\]
the rational angles in $V$ with squared amplitude $\zeta_{12}^6$ are precisely $(\tau+g_+,\tau+h_+)$ and $(\tau+g_-,\tau+h_-)$.
The curve in weighted projective space
\[
u^2 =(\lambda^2 + \lambda \mu + \mu^2) (2 \lambda^2 + 2 \lambda \mu + \mu^2)
\]
has genus 1 and a rational point $[1:-1:1]$, so it defines an elliptic curve. It is in particular isomorphic over $\mathbb{Q}$ to the elliptic curve with Weierstrass equation $y^2 = x^3 - x^2 - 4x - 2$, whose Mordell-Weil group over $\mathbb{Q}$ is $\mathbb{Z}/2\mathbb{Z} \oplus \mathbb{Z}$. A generator for the free part of the Mordell-Weil group is given by $\left(-\frac{3}{4},\frac{1}{8} \right)$.

\item If such a space has, in addition to the angles of squared amplitude $\zeta_{12}$ and $\zeta_{12}^5$, rational angles of two different amplitudes, then (up to a nonzero rational number) $\tau$ is one of the following:
\begin{enumerate}
\item $\tau_1 = \zeta_{12}^3-\zeta_{12}^2+1$. The corresponding space is of type \circled{4}, with a rational 4-tuple given by $(1,\tau,\tau-1,\tau-\frac{1}{2})$ (angles of squared amplitudes $\zeta_{12}^{\pm 1}, \zeta_{12}^{\pm 2}, \zeta_{12}^{\pm 5}, \zeta_{12}^6$).

\item $\tau_2 = \zeta_{12}+1$. The corresponding space is of type \circled{4}, with a rational 4-tuple given by $(1,\tau,\tau-1,\tau-2)$ (angles of squared amplitudes $\zeta_{12}^{\pm 1}, \zeta_{12}^{\pm 2}, \zeta_{12}^{\pm 5}, \zeta_{12}^6$).

\item $\tau_3 = \frac{1}{2}(\zeta_{12}^3-\zeta_{12}^2+\zeta_{12}+2)$. The corresponding space is of type \circled{4}, with a rational 4-tuple given by $(1,\tau,\tau-1,\tau-\frac{3}{2})$ (angles of squared amplitudes $\zeta_{12}^{\pm 1}, \zeta_{12}^{\pm 3}, \zeta_{12}^{\pm 4}, \zeta_{12}^{\pm 5}, \zeta_{12}^6$).

\end{enumerate}
In addition, we have $\langle 1, \tau_1 \rangle_\mathbb{Q} \sim \langle 1,\tau_2\rangle_\mathbb{Q}$, and this is the only pair of homothetic spaces in this list.

\end{enumerate}
\end{theorem}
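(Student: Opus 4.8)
The plan is to treat Theorem~\ref{thm:121} as the $(n,k)=(12,1)$ instance of Theorem~\ref{thm: general description angles in rational families}: that result already supplies parts~(1)--(3) and the general shape of parts~(7)--(11), so the work is (a) to compute the relevant quartic forms and angle parametrisations explicitly, (b) to analyse the genus-$1$ and higher-genus curves that appear, and (c) to identify the finitely many degenerate fibres (the set $S$ and the exceptional $\tau_i$). Part~(1) is immediate because $\langle 1, c\tau\rangle_\mathbb{Q}=\langle 1,\tau\rangle_\mathbb{Q}$ for $c\in\mathbb{Q}^\times$; parts~(2)--(3) follow because ``the squared angle between $1$ and $\tau$ is $\zeta_{12}$'' is equivalent to $\arg\tau\equiv\pi/12\pmod\pi$, i.e.\ $\tau\in\mathbb{R}\zeta_{24}$, and $\mathbb{Q}(\zeta_{12})\cap\mathbb{R}\zeta_{24}$ is the $2$-dimensional $\mathbb{Q}$-vector space spanned by $\tau_{1,0}=\zeta_{12}+\zeta_{12}^2-\zeta_{12}^3$ and $\tau_{0,1}=1+\zeta_{12}$ (one checks both have argument $\pi/12$).

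For part~(4) I would invoke the criterion that $\langle 1,\tau_1\rangle_\mathbb{Q}\sim\langle 1,\tau_2\rangle_\mathbb{Q}$ iff $1,\tau_1,\tau_2,\tau_1\tau_2$ are $\mathbb{Q}$-linearly dependent: taking $\tau_2=\overline{\tau_{\lambda,\mu}}$, both $\tau_{\lambda,\mu}+\overline{\tau_{\lambda,\mu}}$ and $\tau_{\lambda,\mu}\overline{\tau_{\lambda,\mu}}$ lie in $\mathbb{Q}(\zeta_{12})\cap\mathbb{R}=\mathbb{Q}(\sqrt3)$, a $2$-dimensional space, which forces the dependence. For parts~(7)--(11) I would substitute $\tau=\tau_{\lambda,\mu}$ into Equation~\eqref{eqn:SecondoAngolo} with $x=\zeta_{12}$, $y=\zeta_{12}^e$, using the identity $\tau\zeta_{12}^{-1}=\overline\tau$ (valid since $\arg\tau=\pi/12$) to reduce the condition to a system over $\mathbb{Q}(\sqrt3)$; separating $\sqrt3$-components then yields, for $e\in\{1,5\}$, explicit rational values $(a,b)$, $(c,d)$, $(c',d')$ — hence two angles of each of these amplitudes for \emph{every} parameter (part~(7)) — and, for $e\in\{2,3,4,6\}$, the condition that a certain explicit quartic $f_{12,1,e}(\lambda,\mu)$ be a rational square. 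Each curve $u^2=f_{12,1,e}(\lambda,\mu)$ in weighted projective space has an obvious rational point and genus $1$, hence is an elliptic curve: I would put it in Weierstrass form, show the Mordell--Weil group is finite for $e=2,3$ (so its rational points are those listed, all in $S$) and record the rank-$1$ structure for $e=4,6$, and read off the angle formulas $e_\pm,f_\pm$, $g_\pm,h_\pm$ from Theorem~\ref{thm: general description angles in rational families}(2).

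Parts~(5)--(6): the set $S$ consists of the $[\lambda:\mu]$ at which the parametrisations of parts~(7)--(11) degenerate — a denominator of $a,b,c,d,\dots$ vanishes, two generically distinct angles collapse, or $V_{[\lambda:\mu]}$ becomes a CM space — together with their images under $\iota$; for each of the six resulting points I would write $\tau_{\lambda,\mu}$ down and enumerate all rational angles by brute force (the common order is $12$, a trivial computation), cross-referencing the $2\cdot\circled{3}$ cases with Theorem~\ref{thm:3plus2}. The involution $\iota\colon[\lambda:\mu]\mapsto[\lambda+\mu:-\mu]$ is the change of basis that swaps the role of $(1,\tau)$ with the second $\zeta_{12}$-angle $(\tau+a,\tau+b)$ of part~(7); it manifestly preserves the fibres of $f$, it is fixed-point-free off $S$, and since a space contains at most two angles of squared amplitude $\zeta_{12}$ (part~(7)), the map $f$ is exactly $2$-to-$1$ off $S$ (surjectivity onto the relevant homothety classes being part~(2)).

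Finally, parts~(12)--(13): a space $V_{[\lambda:\mu]}$ with $[\lambda:\mu]\notin S$ acquires two further amplitudes only if it has angles of squared amplitude $\zeta_{12}^4$ \emph{and} $\zeta_{12}^6$ (amplitudes $\zeta_{12}^2$ or $\zeta_{12}^3$ already force $[\lambda:\mu]\in S$ by parts~(8)--(9)), i.e.\ only if $[\lambda:\mu]$ lies on the fibre product $\{u^2=f_{12,1,4},\ v^2=f_{12,1,6}\}$ — a curve of exactly the type treated in Proposition~\ref{prop:HigherGenusCurves}; I would compute its $\gcd$ structure, deduce the induced degree-$2$ maps to genus-$1$ curves, and thereby determine its finitely many rational points. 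Adjoining the finitely many points of $S$ already realising $\ge 2$ extra amplitudes (for instance $[1:0]$, carrying $\zeta_{12}^2$ and $\zeta_{12}^3$) gives, up to $\mathbb{Q}^\times$, exactly $\tau_1,\tau_2,\tau_3$; their exact types and the single homothety $\langle 1,\tau_1\rangle_\mathbb{Q}\sim\langle 1,\tau_2\rangle_\mathbb{Q}$ are then checked directly (again via $\mathbb{Q}$-linear dependence of $1,\tau_1,\tau_2,\tau_1\tau_2$). The main obstacle throughout is the Diophantine-geometry step: determining the Mordell--Weil groups in parts~(8)--(11) and, above all, computing the rational points on the genus $\ge 2$ fibre-product curve for part~(12) — this is where the descent and Chabauty--Coleman methods of Section~\ref{sec:comput.details} are needed. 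A secondary subtlety is to pin down $S$ precisely, so that the ``$2$-to-$1$ off $S$'' and generic-structure assertions hold with no degenerate fibre over- or under-counted.
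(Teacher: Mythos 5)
Your proposal is correct and follows essentially the same route as the paper: the paper gives no separate written proof of Theorem~\ref{thm:121}, but derives it exactly as you describe, as the $(n,k)=(12,1)$ instance of Theorem~\ref{thm: general description angles in rational families} (with the quartics $f_{12,1,e}$ obtained by intersecting the conic \eqref{eqn:SecondoAngolo} with its conjugate), the homothety claims via the $\mathbb{Q}$-linear-dependence criterion, the Mordell--Weil computations for parts (8)--(11), and Proposition~\ref{prop:HigherGenusCurves} together with the descent/Chabauty machinery of Section~\ref{sec:comput.details} for the fibre-product curve in part (12). You also correctly isolate $(4,6)$ as the only pair of extra amplitudes requiring the higher-genus analysis, since parts (8)--(9) confine amplitudes $\zeta_{12}^2,\zeta_{12}^3$ to points of $S$.
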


\begin{theorem}\label{thm:123}

For $(\lambda,\mu) \in \mathbb{Q}^2 \setminus \{(0,0)\}$, let
\[
\tau_{\lambda, \mu} :=  \lambda(\zeta_{12}+\zeta_{12}^2) + \mu(1+\zeta_{12}^3).
\]
The following hold:
\begin{enumerate}
\item The space $\langle 1, \tau_{\lambda, \mu}\rangle_{\mathbb{Q}}$ only depends on $[\lambda : \mu] \in \mathbb{P}_1(\mathbb{Q})$. We denote it by $V_{[\lambda : \mu]}$.
\item Let $\tau \in \mathbb{Q}(\zeta_{12})$ generate a space $V$ such that the squared angle between $1$ and $\tau$ is $\zeta_{12}^3$. There exists $(\lambda,\mu) \in \mathbb{Q}^2 \setminus \{(0,0)\}$ such that $\tau = \tau_{\lambda, \mu}$.
\item Conversely, for every $(\lambda,\mu) \in \mathbb{Q}^2 \setminus \{(0,0)\}$, the squared angle between $1$ and $\tau$ is $\zeta_{12}^3$.
\item For every $[\lambda : \mu] \in \mathbb{P}_1(\mathbb{Q})$, the complex conjugate $\overline{V_{[\lambda : \mu]}}$ is homothetic to $V_{[\lambda : \mu]}$.
\item The map
\[
\begin{array}{cccc}
f : & \mathbb{P}_1(\mathbb{Q}) & \to & \{ \text{homothety classes of spaces} \} \\
& [\lambda : \mu] & \mapsto & \text{class of } V_{[\lambda : \mu]}
\end{array}
\]
is 2-to-1. The involution $\iota : [\lambda : \mu] \mapsto [-\lambda : \lambda+ \mu]$ preserves the fibres of $f$. %

\item Let $S= \{ [-2,1] \}$. Notice that $\iota([-2:1]) =[-2 : 1]$.
The spaces corresponding to $[\lambda : \mu] \in S$ has type $\circled{2}$ with a single rational angle given by $(1,\tau_{-2,1})$ with squared amplitude $\zeta_{12}^3$

\item If $[\lambda:\mu]\not\in S$, the space $V_{[\lambda : \mu]}$  contains precisely two angles with squared amplitude $\zeta_{12}^3$.
These are given by
\[
(1,\tau) \text{ and } (\tau+a,\tau+b),
\]
where
\[
a = -\lambda - 2 \mu, \quad b = \frac{\lambda^2 - 2 \lambda \mu - 2 \mu^2}{\lambda + 2 \mu}.
\]

\item Let $[\lambda : \mu] \not \in S$. The space $V_{[\lambda : \mu]}$ contains an angle with squared amplitude $\zeta_{12}^2$ if and only if $(\lambda^2 - \lambda \mu - \mu^2) (\lambda^2 + \lambda \mu + \mu^2)$ is the square of a rational number $u$. In this case, setting
\[
c_\pm = -\frac{2 \lambda \mu+\mu^2 \pm u}{\lambda+\mu}, \quad d_\pm= \frac{\lambda^2-\mu^2 \mp u}{\mu}
\]
the rational angles in $V$ with squared amplitude $\zeta_{12}^2$ are precisely $(\tau+c_+,\tau+d_+)$ and $(\tau+c_-,\tau+d_-)$.
The curve in weighted projective space
\[
u^2 = (\lambda^2 - \lambda \mu - \mu^2) (\lambda^2 + \lambda \mu + \mu^2)
\]
has genus 1 and a rational point $(1:0:1)$, so it defines an elliptic curve. It is in particular isomorphic over $\mathbb{Q}$ to the elliptic curve with Weierstrass equation $y^2 = x^3 - x^2 + 4x$, whose Mordell-Weil group over $\mathbb{Q}$ is $\mathbb{Z}/2\mathbb{Z} \oplus \mathbb{Z}$. A generator for the free part of the Mordell-Weil group is given by $(1,2)$.

\item Let $[\lambda : \mu] \not \in S$. The space $V_{[\lambda : \mu]}$ contains an angle with squared amplitude $\zeta_{12}^4$ if and only if $(5 \lambda^2 - \lambda \mu - \mu^2) (\lambda^2 + \lambda \mu + \mu^2)$ is the square of a rational number $u$. In this case, setting
\[
e_\pm = -\frac{\lambda^2+4 \lambda \mu+\mu^2 \pm u}{2 \lambda+\mu}, \quad f_\pm = \frac{-2 \lambda^2-2 \lambda \mu+\mu^2 \pm u}{\lambda-\mu}
\]
the rational angles in $V_{[\lambda : \mu]}$ with squared amplitude $\zeta_{12}^4$ are precisely $(\tau+e_+,\tau+f_+)$ and $(\tau+e_-,\tau+f_-)$.
The curve in weighted projective space
\[
u^2 = (5 \lambda^2 - \lambda \mu - \mu^2) (\lambda^2 + \lambda \mu + \mu^2)
\]
has genus 1 and a rational point $(1:1:3)$, so it defines an elliptic curve. It is in particular isomorphic over $\mathbb{Q}$ to the elliptic curve with Weierstrass equation $y^2 = x^3 + 9x + 54$, whose Mordell-Weil group over $\mathbb{Q}$ is $\mathbb{Z}/2\mathbb{Z} \oplus \mathbb{Z}$. A generator for the free part of the Mordell-Weil group is given by $(6,18)$.

\item The space $V_{[\lambda : \mu]}$ does not contain any angles with squared amplitude $\zeta_{12}^6$.
Indeed, this happens if and only if $2(\lambda^2+\lambda\mu+\mu^2)$ is the square of a rational number $u$, but %
the genus-0 curve $u^2=2(\lambda^2+\lambda\mu+\mu^2)$ in $\mathbb{P}_{2,\mathbb{Q}}$ has no rational points (since it has no $\mathbb{Q}_2$- and no $\mathbb{Q}_3$-points).

\item No such space has both an angle with squared amplitude $\zeta_{12}^2$ and an angle with squared amplitude $\zeta_{12}^4$.

\item If such a space has, in addition to the angles of squared amplitude $\zeta_{12}^3$, rational angles of two different amplitudes, then:
\begin{enumerate}
\item $[\lambda:\mu]=[1,0]$ or $[1:-1]$, which correspond to a space of type \circled{4}, with a rational 4-tuple given by $(1,\tau_{1,0},\tau_{1,0}-1,\tau_{1,0}+1)$ (angles of squared amplitudes $\zeta_{12}^{\pm 1}, \zeta_{12}^{\pm 2}, \zeta_{12}^{\pm 3}, \zeta_{12}^{\pm 5}$).

\item $[\lambda:\mu]=[1,1]$ or $[-1:2]$, which correspond to a space of type \circled{4}, with a rational 4-tuple given by $(1,\tau_{1,1},\tau_{1,1}-1,\tau_{1,1}-3)$ (angles of squared amplitudes $\zeta_{12}^{\pm 1}, \zeta_{12}^{\pm 3}, \zeta_{12}^{\pm 4}, \zeta_{12}^{\pm 5}$).

\end{enumerate}

\end{enumerate}
\end{theorem}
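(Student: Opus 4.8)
The plan is to treat Theorem~\ref{thm:123} as the specialisation of Theorem~\ref{thm: general description angles in rational families} to $(n,k)=(12,3)$, supplemented by explicit computations with a handful of elliptic curves. First I would set up the linear-algebra backbone: $\mathbb{Q}(\zeta_{12})$ has degree $4$ over $\mathbb{Q}$, the subspace $\mathbb{Q}(\zeta_{12})\cap\mathbb{R}\zeta_{24}^3$ has dimension $2$, and $\{\zeta_{12}+\zeta_{12}^2,\,1+\zeta_{12}^3\}$ is a $\mathbb{Q}$-basis of it; since $\zeta_{12}+\zeta_{12}^2=\tfrac{1+\sqrt3}{2}(1+i)$ and $1+\zeta_{12}^3=1+i$, one gets $\tau_{\lambda,\mu}=(1+i)t$ with $t=\tfrac{(1+\sqrt3)\lambda}{2}+\mu\in\mathbb{Q}(\sqrt3)$, which makes $\arg(\tau_{\lambda,\mu})=\pi/4$ transparent and proves (2) and (3). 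Statements (1), (4), (5) are then routine: rescaling $(\lambda,\mu)$ gives a homothetic space; the map $f$ is $2$-to-$1$ because, by \cite{DvoVenZan}*{Remark 2.1}, $\langle 1,\tau\rangle_\mathbb{Q}\sim\langle 1,\tau'\rangle_\mathbb{Q}$ iff $1,\tau,\tau',\tau\tau'$ are $\mathbb{Q}$-linearly dependent, and expanding this relation in the basis $\{1,\sqrt3,i,i\sqrt3\}$ of $\mathbb{Q}(\zeta_{12})$ for $\tau=(1+i)t$, $\tau'=(1+i)t'$ pins down $t'$ either proportional to $t$ (same space) or to the $\iota$-image, whose formula $[\lambda:\mu]\mapsto[-\lambda:\lambda+\mu]$ one reads off directly; complex conjugation is handled by exhibiting a rational $\kappa$ with $\kappa\,\overline{\tau_{\lambda,\mu}}\in\langle 1,\tau_{\lambda,\mu}\rangle_\mathbb{Q}$. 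The degenerate fibre $S=\{[-2:1]\}$ and claim (6) are a direct check that $\langle 1,\tau_{-2,1}\rangle_\mathbb{Q}$ has $(1,\tau_{-2,1})$ as its only rational angle.

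For the angle count (7)--(10) I would, for each $e\in\{2,3,4,6\}$, determine when $\langle 1,\tau_{\lambda,\mu}\rangle_\mathbb{Q}$ contains an angle of squared amplitude $\zeta_{12}^e$ by specialising \eqref{eqn:1angle:prima} (equivalently \eqref{eqn:SecondoAngolo}) to $\mu^2=\zeta_{12}^e$ with the first side of the form $\tau+a$ or $1$. With $\tau_{\lambda,\mu}=(1+i)t$, $t=\alpha+\beta\sqrt3$, the conic defining the second angle has coefficients in $\mathbb{Q}(\sqrt3)$, so a rational point forces the vanishing of its $\sqrt3$-part, leaving one further equation: for $e=3$ the system has the unique solution $(a,b)=\bigl(-\lambda-2\mu,\frac{\lambda^2-2\lambda\mu-2\mu^2}{\lambda+2\mu}\bigr)$, which gives (7); for $e\in\{2,4,6\}$ it becomes ``$f_{12,3,e}(\lambda,\mu)$ is a rational square'' for the quartic forms listed. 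For $e=6$ the curve $u^2=2(\lambda^2+\lambda\mu+\mu^2)$ is a conic with no $\mathbb{Q}_2$- and no $\mathbb{Q}_3$-points, which proves (10). For $e=2$ and $e=4$ the curve has genus $1$ with the displayed rational point; I would put it in Weierstrass form, compute the Mordell--Weil group in \textsc{Magma} (obtaining $\mathbb{Z}/2\mathbb{Z}\oplus\mathbb{Z}$ with the stated generators in both cases), and read off the angle formulas by back-substitution, giving (8) and (9).

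The main obstacle is part (11): since the elliptic curves of (8) and (9) both have positive rank, ruling out the simultaneous presence of a $\zeta_{12}^2$- and a $\zeta_{12}^4$-angle cannot be reduced to a rank-$0$ statement on either curve alone. Instead I would note that $f_{12,3,2}$ and $f_{12,3,4}$ share the common quadratic factor $q_1=\lambda^2+\lambda\mu+\mu^2$, so the relevant locus is a rational point on the fibre product $C_{12,3,2,4}$; equivalently, both squares force $(\lambda^2-\lambda\mu-\mu^2)(5\lambda^2-\lambda\mu-\mu^2)$ to be a square as well. By Proposition~\ref{prop:HigherGenusCurves}(3), $C_{12,3,2,4}$ desingularises to a genus-$3$ curve $\tilde C$ admitting degree-$2$ maps to the three genus-$1$ curves $w^2=q_iq_j$ (two of which are the curves of (8) and (9), the third being $w^2=(\lambda^2-\lambda\mu-\mu^2)(5\lambda^2-\lambda\mu-\mu^2)$). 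Its rational points are then determined by the methods of Section~\ref{sec:comput.details}: either the third elliptic quotient has rank $0$, so one enumerates its finitely many points and checks that none lifts to a point of $C_{12,3,2,4}$ with all coordinates non-zero, or else one applies a Chabauty--Coleman argument on $\tilde C$ (its Jacobian being isogenous to the product of the three elliptic quotients).

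Finally, part (12) is assembled from (7)--(11): in this family (where $y,z$ are not primitive $12$th roots) the only possible extra squared amplitudes are $\zeta_{12}^2$ and $\zeta_{12}^4$ (never both, by (11), and never $\zeta_{12}^6$, by (10)) together with the order-$12$ amplitudes $\zeta_{12},\zeta_{12}^5$; a space carrying an order-$12$ extra amplitude also lies in the family of Theorem~\ref{thm:121}, so the spaces with at least two extra amplitudes form a subset of the finitely many spaces classified there. Matching that finite list against the conditions of (8) and (9) pins the parameters down to $[\lambda:\mu]\in\{[1:0],[1:-1]\}$ (case (a)) and $[\lambda:\mu]\in\{[1:1],[-1:2]\}$ (case (b)), with the asserted rational $4$-tuples and amplitudes; the homothety relations $V_{[1:0]}\sim V_{[1:-1]}$ and $V_{[1:1]}\sim V_{[-1:2]}$ are instances of the involution $\iota$ from (5).
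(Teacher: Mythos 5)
Your proposal is correct and follows essentially the same route as the paper: Theorem~\ref{thm: general description angles in rational families} instantiated at $(n,k)=(12,3)$ for the parametrisation, the basis $\{\zeta_{12}+\zeta_{12}^2,\,1+\zeta_{12}^3\}$ and the quartic forms $f_{12,3,e}$; Proposition~\ref{prop:HigherGenusCurves}(3) for part (11), where you correctly identify the common factor $\lambda^2+\lambda\mu+\mu^2$ and the third elliptic quotient $w^2=(\lambda^2-\lambda\mu-\mu^2)(5\lambda^2-\lambda\mu-\mu^2)$; and the procedures of Section~\ref{sec:comput.details} for the Mordell--Weil and rational-point determinations, which is exactly how the paper establishes the result (it offers no further written argument beyond the MAGMA verification). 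As in the paper, the decisive quantitative steps (the stated Mordell--Weil groups, the rank-$0$ quotient or Chabauty step in part (11), and the finite matching against Theorem~\ref{thm:121} in part (12)) remain delegated to the computer.
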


\begin{remark}
If $E \to \mathbb{P}^1$ is one of the elliptic families, and if $\iota : \mathbb{P}^1 \to \mathbb{P}^1$ is the involution (coming from complex conjugation) which exchanges homothetic spaces, a geometric interpretation can be given to the action of $\iota$ on $E$:
\begin{enumerate}
\item $\iota$ lifts to an involution of $E$; %
\item every involution of an elliptic curve is either of the form $x \mapsto Q-x$ or of the form $x \mapsto x+T_2$ with $T_2$ a 2-torsion point;
\item the lift must preserve the branch locus of $E \to \mathbb{P}^1$, hence it sends the $0$ of $E$ to a ramification point of $E \to \mathbb{P}^1$, that is, a 2-torsion point;
\item hence, if it is of the form $x \mapsto Q-x$, then $Q$ is a 2-torsion point;
\item given a lift $\varphi$ of $\iota$, also $-\varphi$ is a lift, so there is always a lift of the form $x \mapsto x+T_2$;
\item in particular, if $E$ admits a unique non-trivial 2-torsion point, then translation by that point induces $\iota$ on $\mathbb{P}^1$.
\end{enumerate}
\end{remark}

\section{Computational details}\label{sec:comput.details}

\subsection{Enumerating triples of roots of unity}
In carrying out several of the computational arguments outlined above we are faced with the problem of enumerating triples $(x,y,z)$ of roots of unity with a given common order, up to the equivalence relation given by Definition~\ref{def:se}. To make this process more efficient, we rely on the following observations:
\begin{lemma}\label{lemma: reducing triples}
Let $(x,y,z)$ be a triple with common order $n$. Write $x=\zeta_n^{e_1}$, $y=\zeta_n^{e_2}$ and $z=\zeta_n^{e_3}$ with $e_1, e_2, e_3 \in [1,n]$, and identify $(x,y,z)$ with $(e_1,e_2,e_3)$.
\begin{enumerate}
\item Up to the Galois action, $(e_1,e_2,e_3)$ is equivalent to a triple with $e_1 \mid n$;
\item Up to the action of the group $G \times \operatorname{Gal}(\overline{\mathbb{Q}}/\mathbb{Q})$ of Definition~\ref{def:se}, $(e_1,e_2,e_3)$ is equivalent to a triple with $e_1 \mid n$, $(e_2,n) \geq e_1$, $(e_3,n) \geq e_1$, $e_2, e_3 \leq \frac{n}{2}$, and $e_3 \geq e_2$. 
\end{enumerate}
\end{lemma}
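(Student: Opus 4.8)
The plan is to unwind the definitions of the two group actions — the Galois action of $\Gal(\overline{\Q}/\Q)$, which on a primitive $n$-th root of unity $\zeta_n$ acts by $\zeta_n \mapsto \zeta_n^k$ for $k$ coprime to $n$, and the action of the signed-permutation group $G$, which acts on $(e_1,e_2,e_3)$ by permuting the three coordinates and by negating any subset of them (modulo $n$). Part (1) is the easy half: given $(e_1,e_2,e_3)$ with common order $n$, write $d=\gcd(e_1,n)$ and choose $k$ coprime to $n$ with $ke_1 \equiv d \pmod n$; such a $k$ exists because $e_1/d$ is a unit modulo $n/d$ and can be lifted to a unit modulo $n$ by the Chinese Remainder Theorem (choosing it $\equiv 1$ modulo the part of $n$ coprime to $n/d$). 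Applying the corresponding Galois automorphism replaces $(e_1,e_2,e_3)$ by $(d, ke_2, ke_3)$, and now the first coordinate divides $n$.

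For part (2) I would proceed in a fixed order, being careful that each normalisation does not destroy the previous ones. First apply a permutation in $G$ so that $e_1=\gcd(e_1,n)$ is the smallest of the three gcd's $\gcd(e_i,n)$; then use part (1) to make $e_1 \mid n$ while keeping it equal to that minimal gcd value (the Galois action does not change any $\gcd(e_i,n)$, so the minimality is preserved). This gives $e_1 \mid n$ and $\gcd(e_2,n), \gcd(e_3,n) \geq e_1$. Next, since $-1$ acts coordinatewise and $e_i \equiv n-e_i$ represent the same root of unity up to the sign appearing in $G$, I can replace $e_2$ by $\min(e_2, n-e_2)$ and similarly $e_3$, so that $e_2, e_3 \leq n/2$; these sign changes do not affect $e_1$ (which we leave alone) nor the gcd's, so the inequalities just obtained survive. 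Finally, a transposition swapping the second and third coordinates arranges $e_3 \geq e_2$; this swap preserves all the conditions on $e_1$ and the bounds $e_2,e_3\le n/2$ and the gcd inequalities, since those are symmetric in the pair $(2,3)$.

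The one point that needs a little care — and is the main thing to get right rather than a genuine obstacle — is the order of operations: the sign normalisation $e_i \mapsto \min(e_i,n-e_i)$ must be done after fixing $e_1$ and after the Galois step, because a Galois twist by $k$ does not in general commute with replacing $e_i$ by $n-e_i$, and conversely one must check that negating $e_2$ or $e_3$ leaves $e_1$ (already a divisor of $n$) untouched, which it does because we only negate the last two coordinates. I would also remark that $G$ genuinely contains the coordinatewise sign changes and the transpositions: this is exactly the content of the fourth and (the $x,y$-swap in the) fifth symmetries in~\eqref{simmetrie}, together with the Galois-compatibility noted in the text, so no new verification is required. Assembling these steps yields a triple satisfying $e_1 \mid n$, $\gcd(e_2,n)\ge e_1$, $\gcd(e_3,n)\ge e_1$, $e_2,e_3\le n/2$, and $e_3\ge e_2$, as claimed.
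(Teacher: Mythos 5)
Your proof is correct and follows essentially the same route as the paper's: identify the Galois action with multiplication by units modulo $n$, use the elementary fact that two residues with the same gcd with $n$ differ by a unit (which you actually prove via CRT, where the paper merely cites it), and then apply the sign changes and permutations coming from $G$. If anything, your ordering of the normalisations (Galois twist first, then the sign normalisation of $e_2,e_3$, which you correctly observe does not commute with the twist) is slightly more careful than the paper's, which performs the sign normalisation before the Galois step and would strictly speaking need to redo it afterwards; note also that your convention ($\gcd(e_1,n)$ minimal) matches the inequalities $(e_2,n)\geq e_1$, $(e_3,n)\geq e_1$ in the statement, whereas the paper's own proof text reorders so that $(n,e_1)$ is maximal, an internal inconsistency of the paper rather than of your argument.
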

\begin{proof}
Recall that the Galois group of $\mathbb{Q}(\zeta_n)/\mathbb{Q}$ can be canonically identified with $\left( \mathbb{Z}/n\mathbb{Z} \right)^\times$: we denote by $\sigma_i$ the Galois automorphism sending $\zeta_n$ to $\zeta_n^i$. 
Under the identification $(x,y,z) \leftrightarrow (e_1,e_2,e_3)$, the Galois action translates to $\sigma_i(e_1,e_2,e_3)=(ie_1,ie_2,ie_3)$. It is an elementary arithmetic fact that any two elements $a,b$ in $\mathbb{Z}/n\mathbb{Z}$ such that $(n,a)=(n,b)$ differ by multiplication by an element in $\left( \mathbb{Z}/n\mathbb{Z} \right)^\times$. This shows (1). As for (2), observe that up to the action $(e_1,e_2,e_3) \mapsto (\pm e_1 \bmod n, \pm e_2 \bmod n, \pm e_3 \bmod n)$ we can assume that each $e_i$ lies in the interval $[0,\frac{n}{2}]$. 
Using the permutation action we can reorder $(e_1,e_2,e_3)$ so that $(n,e_1) \geq (n, e_2), (n, e_3)$. The Galois action can then be used as in (1) to get an equivalent triple in which $e_1 \mid n$. Finally, reordering again if necessary we can assume $e_3 \geq e_2$.
\end{proof}

\begin{remark}\label{rem:roots_of_unity.degree_at_most_2}
A further reduction in the number of triples to be considered comes from the following observation. Fix a root of unity among $x, y, z$, say $x$ for simplicity. We can consider the polynomial $P$ of equation \eqref{eqn:main} as a polynomial of degree $2$ in $x$. Writing $P=x^2 P_2(a,b,c,d,y,z) + x P_1(a,b,c,d,y,z) + P_0(a,b,c,d,y,z)$, we see that $x$ lies in an at most quadratic extension of $\Q(y,z)$, unless we have $P_2(a,b,c,d,y,z)=P_1(a,b,c,d,y,z)=P_0(a,b,c,d,y,z)=0$. One checks that these equations can only be satisfied by a space of type \circled{4}, hence we can further assume that $\Q(x,y,z)$ has degree at most $2$ over $\Q(y,z)$. The same holds for the degree of $\Q(x,y,z)$ over its subfields $\Q(x,z)$ and $\Q(x,y)$.
\end{remark}

\begin{remark}
Working with the reduced set of triples suggested by Lemma \ref{lemma: reducing triples} and Remark~\ref{rem:roots_of_unity.degree_at_most_2} leads to a tremendous speed-up in practice: for $n=144$ (the largest value we need to consider in the computational proof of Theorem~\ref{thm:Enumerate222}), the reduced set of triples has size 1869, %
less than 0.1\% of the $2,515,968$ triples that satisfy $(e_1,e_2,e_3,144)=1$.
\end{remark}

\subsection{Finding defining equations over \texorpdfstring{$\Q$}{Q}}

Let $K$ be a finite extension of $\mathbb{Q}$ and let $p_1, \ldots, p_r \in K[x_1,\ldots,x_n]$ be polynomials with coefficients in $K$. Suppose we are interested in finding all solutions to the equations
\begin{equation}\label{eq:SchemeOverK}
p_1(x_1,\ldots,x_n) = \cdots = p_r(x_1,\ldots,x_n) = 0
\end{equation}
where $x_1,\ldots,x_n$ are \textit{rational} numbers. Fix a basis $\beta_1,\ldots,\beta_d$ of $K$ as $\mathbb{Q}$-vector space, where $d=[K:\mathbb{Q}]$. We can write each $p_i$ as $p_i(x_1,\ldots,x_n) = \sum_{j=1}^d \beta_j p_{ij}(x_1,\ldots,x_n)$, where each $p_{ij}$ now has rational coefficients. Any solution to \eqref{eq:SchemeOverK} with $x_1,\ldots,x_n \in \mathbb{Q}$ then corresponds to a rational solution of the system of equations
\begin{equation}\label{eq:SchemeOverQ}
p_{ij}(x_1,\ldots,x_n)=0 \quad \forall i=1,\ldots,r, \; j=1,\ldots,d,
\end{equation}
which is now given by equations with rational coefficients. In order to efficiently convert equations as in \eqref{eq:SchemeOverK} to equations as in \eqref{eq:SchemeOverQ}, we use the following simple observation:
\begin{lemma}
With the previous notation, the ideal of $\mathbb{Q}[x_1,\ldots,x_n]$ generated by the polynomials $p_{ij}$ is also generated by the polynomials $\operatorname{tr}_{K/\mathbb{Q}}(p_i \beta_j)$ for $i=1,\ldots,r$ and $j=1,\ldots,d$. Here the \textit{trace} of a polynomial is taken coefficient-wise.
\end{lemma}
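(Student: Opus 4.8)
The plan is to reduce the statement to elementary linear algebra over $\mathbb{Q}$ governed by the trace form of $K/\mathbb{Q}$. First I would check that the assertion even makes sense: since $\operatorname{tr}_{K/\mathbb{Q}}$ is $\mathbb{Q}$-linear, for any $f \in K[x_1,\ldots,x_n]$ the coefficient-wise trace $\operatorname{tr}_{K/\mathbb{Q}}(f)$ lies in $\mathbb{Q}[x_1,\ldots,x_n]$; in particular each $\operatorname{tr}_{K/\mathbb{Q}}(p_i\beta_j)$ is a genuine element of $\mathbb{Q}[x_1,\ldots,x_n]$, so one may speak of the ideal it generates.

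The key computation is to expand, for a fixed index $i$ and each $k=1,\ldots,d$,
\[
\operatorname{tr}_{K/\mathbb{Q}}(p_i\beta_k) = \operatorname{tr}_{K/\mathbb{Q}}\!\Bigl(\sum_{j=1}^d \beta_j\beta_k\, p_{ij}\Bigr) = \sum_{j=1}^d \operatorname{tr}_{K/\mathbb{Q}}(\beta_j\beta_k)\, p_{ij},
\]
where the last equality uses $\mathbb{Q}$-linearity of the trace together with the fact that the $p_{ij}$ have rational coefficients. Introducing the Gram matrix $T = \bigl(\operatorname{tr}_{K/\mathbb{Q}}(\beta_j\beta_k)\bigr)_{j,k} \in \mathrm{Mat}_d(\mathbb{Q})$ of the trace bilinear form in the basis $\beta_1,\ldots,\beta_d$, this says precisely that the column vector $\bigl(\operatorname{tr}_{K/\mathbb{Q}}(p_i\beta_1),\ldots,\operatorname{tr}_{K/\mathbb{Q}}(p_i\beta_d)\bigr)^{\mathsf T}$ is obtained from $\bigl(p_{i1},\ldots,p_{id}\bigr)^{\mathsf T}$ by multiplication by $T^{\mathsf T}$.

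Since $\operatorname{char}\mathbb{Q}=0$, the extension $K/\mathbb{Q}$ is separable, so the trace form is non-degenerate and $\det T\neq 0$. Hence $T$ (and $T^{\mathsf T}$) is invertible over $\mathbb{Q}$, and for each fixed $i$ the two families $\{p_{ij}\}_{j=1}^d$ and $\{\operatorname{tr}_{K/\mathbb{Q}}(p_i\beta_j)\}_{j=1}^d$ span the same $\mathbb{Q}$-subspace of $\mathbb{Q}[x_1,\ldots,x_n]$: each member of either family is an explicit $\mathbb{Q}$-linear combination of the members of the other, via $T^{\mathsf T}$ and its inverse. Taking the ideal of $\mathbb{Q}[x_1,\ldots,x_n]$ generated by these polynomials as $i$ ranges over $1,\ldots,r$ as well, the two ideals coincide, which is the claim. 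There is essentially no obstacle here beyond recalling the non-degeneracy of the trace form; the only point to keep track of is that all the relevant linear relations take place within a single index $i$, so no interaction between different $p_i$'s is ever needed, and the argument is uniform in the choice of basis $\beta_1,\ldots,\beta_d$.
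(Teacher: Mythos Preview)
Your proof is correct and follows essentially the same approach as the paper's: both hinge on the non-degeneracy of the trace form (guaranteed by separability of $K/\mathbb{Q}$) to show that, for each fixed $i$, the families $\{p_{ij}\}_j$ and $\{\operatorname{tr}_{K/\mathbb{Q}}(p_i\beta_j)\}_j$ are related by an invertible $\mathbb{Q}$-linear transformation. The only cosmetic difference is that the paper phrases this via the dual basis $\{\beta_j^\vee\}$ (observing that $p_{ij}=\operatorname{tr}_{K/\mathbb{Q}}(p_i\beta_j^\vee)$ and then passing from $\{\beta_j^\vee\}$ to $\{\beta_j\}$ by an invertible change-of-basis matrix), whereas you work directly with the Gram matrix $T$; these are the same argument in different clothing.
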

\begin{proof}
As $K/\mathbb{Q}$ is a separable extension, it is well-known that the trace form $K \times K \to \mathbb{Q}$ is nondegenerate. Let $\beta_i^\vee$ be the dual basis of $\beta_i$ with respect to the trace form. By definition, $p_{ij}$ is then $\operatorname{tr}_{K/\mathbb{Q}}(p_i \beta_j^\vee)$. On the other hand, as $\{\beta_i\}$ and $\{\beta_i^\vee\}$ are two bases for the $\mathbb{Q}$-vector space $K$, there is an invertible matrix $M_{jk}$ with rational coefficients such that $\sum_k M_{jk}\beta_k^\vee = \beta_j$. This easily implies that the $\mathbb{Q}$-span of the polynomials $\operatorname{tr}_{K/\mathbb{Q}}(p_i \beta_j^\vee)$ coincides with the $\mathbb{Q}$-span of the polynomials $\operatorname{tr}_{K/\mathbb{Q}}(p_i \beta_j)$, which in turn implies the lemma.
\end{proof}

\subsection{Rational points on curves}

The higher-genus curves we encounter in Section~\ref{sect:222} all have genus 3 or 5, as proved in Proposition~\ref{prop:HigherGenusCurves}. In order to determine their rational points, however, we often reduce to studying certain auxiliary curves of genus 2, so we begin with this case. Given a (smooth) projective curve $C/\mathbb{Q}$ of genus 2 with Jacobian $J/\mathbb{Q}$, in order to determine $C(\mathbb{Q})$ we proceed as follows:
\begin{enumerate}
\item Since every such curve is hyperelliptic over $\mathbb{Q}$, we fix an equation for $C$ of the form $y^2=f(x)$ with $f(x) \in \mathbb{Q}[x]$ of degree 5 or 6 and having no repeated factors (by a slight abuse of notation we identify this affine curve with the unique smooth projective curve having the same function field). Up to rescaling, we can in fact assume that $f(x)$ has integral coefficients.
\item We preliminary run a quick test for local solubility: we loop over primes $3 \leq p \leq 13$ that do not divide the discriminant of $f(x)$ and check whether the (projective) curve $C_p$ obtained from $C$ by reduction modulo $p$ has any $\mathbb{F}_p$-rational points. Since we have a well-defined reduction map $C(\mathbb{Q}) \to C_p(\mathbb{F}_p)$, the fact that $C_p(\mathbb{F}_p)=\emptyset$ implies $C(\mathbb{Q})=\emptyset$. To explain why we stop at $p=13$, notice that for $p>13$ (and not dividing the discriminant of $f(x)$) the Weil bounds imply $\#C_p(\mathbb{F}_p) \geq p+1-4\sqrt{p} > 0$, that is, the reduced curve automatically has $\mathbb{F}_p$-points.
\item If $C$ passes this preliminary test, we use standard descent techniques to prove that the Mordell-Weil group $J(\mathbb{Q})$ has rank at most 1. If this is the case, then Chabauty's method (which is fully implemented in MAGMA for genus 2 curves with $\operatorname{rk} J(\mathbb{Q}) \leq 1$) can be used to provably determine $C(\mathbb{Q})$.
\end{enumerate}
All the genus-2 curves we meet can be handled by this combination of techniques.

\begin{remark}[`Modular' interpretation of the automorphisms of $C$]
The curves $C$ we studied in Section~\ref{sect:222} all possess three independent involutions. Two are given by $u \mapsto -u$ and $v \mapsto -v$; the third one is different in each case, but always of the form $[\lambda : \mu : u : v] \mapsto [ \alpha \lambda + \beta \mu, \gamma \lambda + \delta \mu, h u, ku ]$ for a certain $\begin{pmatrix}
\alpha & \beta \\
\gamma & \delta
\end{pmatrix}$ in $\operatorname{GL}_2(\mathbb{Q})$ and $h,k \in \mathbb{Q}^\times$. This fact is not an accident, and is implicitly used in the determination of the rational points of these curves (indeed, this extra involution induces an involution on the quotients of $C$, which is then exploited to obtain $C(\mathbb{Q})$), so we take some time to explain its geometric origin.

To simplify the notation, we consider the special case $n=10$, where we look at angles of squared amplitude $\zeta_{10}^2$ and $\zeta_{10}^3$ (but the argument applies unchanged to all other cases). Consider, therefore, the curve
\[
\begin{cases}
u^2 = f_2 (\lambda, \mu) = \lambda (3\lambda-\mu)(11\lambda^2-9\lambda\mu+4\mu^2) \\
v^2 = f_3(\lambda, \mu) = 41\lambda^4 - 38\lambda^3\mu + 9\lambda^2\mu^2 + 8\lambda\mu^3 - 4\mu^4.
\end{cases}
\]
A rational point $[\lambda : \mu : u : v ]$ of $C$ has the following `modular' interpretation: it gives rise to $\tau =\lambda(-2-2\zeta_{10}^2 + \zeta_{10}^3) + \mu(1+\zeta_{10})$, and to rational numbers
\[
a = \frac{4\lambda^2-6\lambda \mu+\mu^2}{3\lambda-\mu}, \quad b = -\frac{5\lambda(\lambda^2+\lambda\mu-\mu^2)}{4\lambda^2-6\lambda \mu + \mu^2},
\]
\[
a'_{\pm} = \frac{-3\lambda^2+7\lambda \mu - 2\mu^2 \mp u}{-4\lambda + 2\mu}, \quad b'_{\pm} = \frac{5\lambda^2 -5\lambda \mu \mp u}{2\lambda}
\]
and
\[
a''_{\pm} = \frac{\lambda^2+11\lambda\mu-6\mu^2 \mp u}{-2\lambda + 4\mu}, \quad b''_{\pm} = \frac{9\lambda^2 -\lambda \mu -4\mu^2 \mp u}{4\lambda+2\mu},
\]
such that in the space $\langle 1,\tau \rangle_{\mathbb{Q}}$ there are six distinguished pairs of rational angles, namely those given by the rays
\[
(1,\tau), (\tau+a, \tau+b); \quad (\tau + a'_{\pm}, \tau+b'_{\pm}); \quad (\tau+ a''_{\pm}, \tau+b''_{\pm}).
\]
The automorphism $u \mapsto -u$ exchanges $a'_+$ with $a'_-$ and $b'_+$ with $b'_-$, thus exchanging the two angles in the above list with squared amplitude $\zeta_{10}^2$: the `modular' interpretation of this automorphism is the fact that there is no preferred way to order these two angles, so the labels $\pm$ are arbitrary, and can be exchanged freely. Similarly, $v \mapsto -v$ simply exchanges $a''_+ \leftrightarrow a''_-$ and $b''_+ \leftrightarrow b''_-$, and therefore swaps the two angles with amplitude $\zeta_{10}^3$.

Finally, there is a \textit{third} geometric construction giving rise to an automorphism of $C$. Indeed, if $V$ is a space having angles of squared amplitudes $\zeta_{10}, \zeta_{10}^2, \zeta_{10}^3$, the same is true for the complex conjugate $\overline{V}$. If we let $\sigma_i$ (for $(i,10)=1$) be the automorphism of $\mathbb{Q}(\zeta_{10})$ sending $\zeta_{10} \mapsto \zeta_{10}^i$, then we have 
\[
\overline{V} = \langle 1, \overline{\tau} \rangle_\mathbb{Q} \sim \langle \frac{1}{\overline{\tau}}, 1 \rangle_{\mathbb{Q}} = \left\langle \frac{\sigma_1(\tau)\sigma_5(\tau)\sigma_7(\tau)}{\sigma_1(\tau)\sigma_5(\tau)\sigma_7(\tau)\sigma_{11}(\tau)}, 1 \right\rangle_{\mathbb{Q}}= \langle \sigma_1(\tau)\sigma_5(\tau)\sigma_7(\tau), 1 \rangle_{\mathbb{Q}},
\]
where the last equality follows from the fact that $\sigma_1(\tau)\sigma_5(\tau)\sigma_7(\tau)\sigma_{11}(\tau) = N_{\mathbb{Q}(\zeta_{10})/\mathbb{Q}}(\tau)$ is a nonzero rational number. Notice that $1/\overline{\tau}$ is in $\mathbb{Q}(\zeta_{10})$ and forms with the positive real axis the same angle that $\tau$ forms with $1$, so $1/\overline{\tau}$ is again of the form considered in Theorem~\ref{thm:101}. In particular, it is a linear combination with rational coefficients of $-2 -2\zeta_{10}^2+\zeta_{10}^3$ and $1+\zeta_{10}$.
A short calculation shows that
\[
\sigma_1(\tau)\sigma_5(\tau)\sigma_7(\tau) = (\lambda^2 + \lambda \mu - \mu^2) \cdot \left( (4\lambda - 3 \mu)(1+\zeta_{10}) + ( -\mu+3\lambda )(-2 -2\zeta_{10}^2+\zeta_{10}^3) \right),
\]
so that $\overline{V} \sim \langle 1, ( -\mu+3\lambda )(-2 -2\zeta_{10}^2+\zeta_{10}^3) + (4\lambda - 3 \mu)(1+\zeta_{10}) \rangle_{\mathbb{Q}}$. Of course one can also apply the geometric transformations performed above (the symmetry corresponding to complex conjugation and the various homotheties) to the lines in $V$ forming rational angles; in this way, from the original rational point $[\lambda : \mu : u : v]$ we obtain a new point, which can be seen to be $[ 3\lambda -\mu : 4\lambda - 3 \mu : 5u : 5v]$. We note that the signs of $5u, 5v$ in this formula are somewhat arbitrary, since they depend on the precise labelling of the angles in the transformed lattice, but any choice of signs would work, since $u \mapsto -u$ and $v \mapsto -v$ are also automorphisms of $C$.

Although this argument formally applies only to the \textit{rational points} of $C$, all we have done is algebraic manipulation, so it is not hard to see that this is indeed an automorphism of $C$ (of course, this can also be checked directly on the equations). Finally, the geometric interpretation given above (or the explicit formula) makes it clear that the automorphism constructed in this way commutes with $u \mapsto -u$ and with $v \mapsto -v$. This proves that all the curves $C$ we have to consider admit an action of $(\mathbb{Z}/2\mathbb{Z})^3$.
\end{remark}

All the higher-genus curves we encounter in Section~\ref{sect:222} are of one of the two types considered in Proposition~\ref{prop:HigherGenusCurves}, hence we only explain how we find all the rational points on such curves. Let $C$ be such a curve, and -- in case it is singular of genus 3 -- let $f : \tilde{C} \to C$ be its desingularisation. It is clear that we have $C(\mathbb{Q})=f(\tilde{C}(\mathbb{Q})) \cup C_{\operatorname{sing}}(\mathbb{Q})$, where $C_{\operatorname{sing}}$ is the 0-dimensional singular locus of $C$. The determination of $C_{\operatorname{sing}}(\mathbb{Q})$ is straightforward, so we can assume to be working with the smooth curve $\tilde{C}$. Furthermore, if $g(C)=5$, Proposition~\ref{prop:HigherGenusCurves} yields the existence of an explicit map $C \to C'$ with $g(C')=3$, and -- if we can determine $C'(\mathbb{Q})$ -- it is a simple matter to obtain from this also the set $C(\mathbb{Q})$. Thus we just need to explain how we determine $C(\mathbb{Q})$ when $C$ is a smooth curve of genus 3 over $\mathbb{Q}$. It should be stressed that there is no known algorithm capable of achieving this for \textit{all} smooth curves of genus 3 over $\mathbb{Q}$: we simply describe a procedure that \textit{happens} to apply to all the curves that we consider in this work.

\begin{enumerate}
\item We determine whether $C$ is hyperelliptic over $\overline{\mathbb{Q}}$ (in other words, we check whether the canonical map $f$ of $C$ is \textit{not} an embedding).
\item Suppose that $f$ is not an embedding. In this case, $f$ realises $C$ as a double cover of a smooth genus-0 curve $\mathcal{L}$. Using standard techniques, we determine whether $\mathcal{L}$ has $\mathbb{Q}$-rational points: if $\mathcal{L}(\mathbb{Q})=\emptyset$, then clearly also $C(\mathbb{Q})=\emptyset$. Otherwise, using a rational point on $\mathcal{L}$ we can identify $\mathcal{L}$ with $\mathbb{P}_{1,\mathbb{Q}}$, and from this obtain a hyperelliptic model $C_h : y^2=f(x)$ for $C$ over $\mathbb{Q}$ (where, again, with slight abuse of notation we identify this affine curve with the corresponding smooth projective curve).
\item If, on the other hand, the canonical map of $C$ is an embedding, then its image is a plane quartic $Q \subseteq \mathbb{P}_{2,\mathbb{Q}}$.
\item We compute the automorphism group $G$ over $\mathbb{Q}$ of $C_h$ or of $Q$, respectively:
\begin{enumerate}
\item in the hyperelliptic case this is not hard, since every such automorphism is of the form $(x,y) \mapsto ( \frac{ax+b}{cx+d}, \frac{ey}{(cx+d)^3})$ for some $a,b,c,d,e \in \mathbb{Q}$;
\item in the plane quartic case, we exploit the fact that -- since we are working with the canonical embedding -- every automorphism of $Q$ lifts to an automorphism of the ambient $\mathbb{P}_{2,\mathbb{Q}}$ (this follows from the fact that every automorphism of $C$ acts on the three-dimensional vector space of regular differentials by linear automorphisms). Thus we simply need to consider linear transformations of $\mathbb{P}_{2,\mathbb{Q}}$ that preserve -- up to nonzero scalars -- the single quartic equation defining $Q$.
\end{enumerate}
\item For every $\alpha$ in the automorphism group $G$, we consider the quotient map $g_\alpha : C \to C/\langle \alpha \rangle =:C_\alpha$ under the action of the finite group $\langle \alpha \rangle$. This has strictly lower genus than $C$.
\item For every $C_\alpha$ of genus 1, using MAGMA's machinery for elliptic curves
we check whether $C_\alpha(\mathbb{Q})$ is a finite set. If this is the case, pulling back the rational points via $g_\alpha$ we determine $C(\mathbb{Q})$.
\item For every $C_\alpha$ of genus 2, we use the techniques explained above to try and determine $C_\alpha(\mathbb{Q})$. If they succeed, we again obtain the set $C(\mathbb{Q})$ by pulling back $C_\alpha(\mathbb{Q})$ along $g_\alpha$.
\end{enumerate}

It is a (lucky) accident of nature that, for all the curves $C$ we consider in Section~\ref{sect:222}, the above procedure always finds a curve $C_\alpha$ for which the set $C_\alpha(\mathbb{Q})$ is finite and can be determined, and therefore $C(\mathbb{Q})$ can also be determined.

\newpage

\renewcommand{\arraystretch}{3}
\begin{landscape}
\section{Tables}\label{sect:Tables}

\begin{table}
    \begin{tabular}{|c|c|c|c|c|c|c|}
    \hline
    Angle & $u^2=f(\lambda,\mu)$ & $a_\pm, b_\pm$ & $P$ & $E$ & $E(\mathbb{Q})$ & Generators \\ \hline
$\zeta_8^2$ & $\lambda(\lambda+\mu)(\lambda^2+\mu^2)$ & $\displaystyle \frac{\lambda^2-\mu^2 \pm u}{\mu}$, $\displaystyle \frac{-2\lambda\mu \mp u}{\lambda}$ & $[0:1:0]$ & $y^2=(x+1)(x^2+1)$ & $\mathbb{Z}/2\mathbb{Z} \oplus \mathbb{Z}$ & $(-1,0), (1,2)$ \\ 
\hline
$\zeta_8^4$ & $(\lambda^2+\mu^2)(3\lambda^2+2\lambda\mu+\mu^2)$ & $\displaystyle \frac{\lambda^2-2\lambda\mu-\mu^2 \pm u}{\lambda+\mu}$, $\displaystyle \frac{\lambda^2-2\lambda\mu-\mu^2 \mp u}{\lambda + \mu}$ & $[0: 1: 1]$ & $y^2=(x+3)(x^2-2x-7)$ & $\mathbb{Z}/2\mathbb{Z} \oplus \mathbb{Z}$ & $(-3,0), (5,8)$ \\ \hline
    \end{tabular}
    
    \caption{Rational angles in lattices with squared angle $(1,\tau)=\zeta_8$}\label{table:8}
    \end{table}

\begin{table}
    \begin{tabular}{|c|c|c|c|c|c|c|}
    \hline
    Angle & $u^2=f(\lambda,\mu)$ & $a_\pm, b_\pm$ & $P$ & $E$ & $E(\mathbb{Q})$ & Generators \\ \hline
$\zeta_{10} ^ 4$ &
$\mu(\lambda+\mu)(4\lambda^2+\lambda \mu + \mu^2)$ &
$\displaystyle \frac{-2\lambda^2 - \lambda\mu + \mu^2 \pm u}{2\lambda},
\frac{-3\lambda\mu - \mu^2 \mp u}{2\mu}$ &
$[ 0 : 1 : 1 ]$ &
$y^2 + xy + y = x^3 + x^2$ &
$\mathbb{Z}/4\mathbb{Z}$ &
$(0, 0)$ \\ \hline
$\zeta_{10}^5$ &
$\lambda^4 + 3\lambda^3\mu + 4\lambda^2\mu^2 + 2\lambda\mu^3 + \mu^4$ &
$\displaystyle \frac{-\lambda^2 - 2\lambda\mu \pm u}{\lambda + \mu}, 
\frac{-\lambda^2 - 2\lambda\mu \mp u}{\lambda + \mu}$ &
$[ 0 : 1 : 1 ]$ &
$y^2 = (x+2)(x^2-x-1)$ &
$\mathbb{Z}/2\mathbb{Z} \oplus \mathbb{Z}$ &
$(-2, 0), (3, -5)$ \\ \hline
    \end{tabular}
    
    \caption{Rational angles in lattices with squared angle $(1,\tau)=\zeta_{10}^2$}
    \end{table}

\begin{table}
    \begin{tabular}{|c|c|c|c|c|c|c|}
    \hline
    Angle & $u^2=f(\lambda,\mu)$ & $a_\pm, b_\pm$ & $P$ & $E$ & $E(\mathbb{Q})$ & Generators \\ \hline
$\zeta_{12} ^ 2$ &
$(\lambda^2 - \lambda\mu - \mu^2)(\lambda^2 + \lambda\mu + \mu^2)$ &
$\displaystyle \frac{-2\lambda\mu - \mu^2 \mp u}{\lambda + \mu},
\frac{\lambda^2 - \mu^2 \mp u}{\mu}$ &
$[ 1 : -1 : 1 ]$ &
$y^2 = x(x^2 - x + 4)$ &
$\mathbb{Z}/2\mathbb{Z} \oplus \mathbb{Z}$ &
$(0, 0), (4, 8)$ \\ \hline

$\zeta_{12} ^ 4$ &
$(\lambda^2 + \lambda\mu + \mu^2)(5\lambda^2-\lambda\mu-\mu^2)$ &
$\displaystyle \frac{-\lambda^2 - 4\lambda\mu - \mu^2 \mp u}{2\lambda + \mu},
\frac{2\lambda^2 + 2\lambda\mu - \mu^2 \mp u}{-\lambda + \mu}$ &
$[ 1 : 1 : 3 ]$ &
$y^2 = (x+3)(x^2-3x+18)$ &
$\mathbb{Z}/2\mathbb{Z} \oplus \mathbb{Z}$ &
$(-3, 0), (6, 18)$ \\ \hline
    \end{tabular}
    
    \caption{Rational angles in lattices with squared angle $(1,\tau)=\zeta_{12}^3$}
    \end{table}

\begin{table}
    \begin{tabular}{|c|c|c|c|c|c|c|}
    \hline
    Angle & $u^2=f(\lambda,\mu)$ & $a_\pm, b_\pm$ & $P$ & $E$ & $E(\mathbb{Q})$ & Generators \\ \hline
$\zeta_{10}^2$ &
$\lambda (3\lambda-\mu)(11\lambda^2-9\lambda\mu+4\mu^2)$ &
$\begin{array}{c} \displaystyle \frac{-3\lambda^2 + 7\lambda\mu - 2\mu^2 \mp u}{-4\lambda + 2\mu} \\
\displaystyle \frac{5\lambda^2 - 5\lambda\mu \mp u}{2\lambda} \end{array}$ &
$[ 0 : 1 : 0 ]$ &
$y^2 + xy + y = x^3 - x^2 + 2$ &
$\mathbb{Z}/2\mathbb{Z} \oplus \mathbb{Z}$ &
$(-1, 0), (4, 5)$ \\ \hline

$\zeta_{10}^3$ &
$41\lambda^4 - 38\lambda^3\mu + 9\lambda^2\mu^2 + 8\lambda\mu^3 - 4\mu^4$ &
$\begin{array}{c} \displaystyle \frac{\lambda^2 + 11\lambda\mu - 6\mu^2 \mp u}{-2\lambda + 4\mu} \\
\displaystyle \frac{9\lambda^2 - \lambda\mu - 4\mu^2 \mp u}{4\lambda + 2\mu}\end{array}$ &
$[ 1 : 1 : 4 ]$ &
$y^2 + xy + y = x^3 - x^2 + 20x + 22$ &
$\mathbb{Z}/4\mathbb{Z} \oplus \mathbb{Z}$ &
$(4, 10), (79, -740)$ \\ \hline

$\zeta_{10}^4$ &
$(2\lambda^2-2\lambda\mu+\mu^2)(14\lambda^2-6\lambda\mu+\mu^2)$ & 
$\begin{array}{c} \displaystyle \frac{6\lambda^2 - 4\lambda\mu - \mu^2 \mp u}{4\lambda} \\
\displaystyle \frac{-2\lambda^2 + 8\lambda\mu - 3\mu^2 \mp u}{-2\lambda + 2\mu}
\end{array}$ &
$[ 0 : 1 : 1 ]$ &
$y^2 = (x+2)(x^2-x-1)$ &
$\mathbb{Z}/2\mathbb{Z} \oplus \mathbb{Z}$ &
$(-2, 0), (3, -5)$ \\ \hline

$\zeta_{10}^5$ &
$31\lambda^4 - 38\lambda^3\mu + 24\lambda^2\mu^2 - 7\lambda\mu^3 + \mu^4$ &
$\begin{array}{c}\displaystyle \frac{-4\lambda^2 + 6\lambda\mu - \mu^2 \pm u}{-3\lambda + \mu} \\
\displaystyle \frac{-4\lambda^2 + 6\lambda\mu - \mu^2 \mp u}{-3\lambda + \mu}
\end{array}
$ &
$[ 0 : 1 : 1 ]$ &
$y^2 = (x+5)(x^2-5x-25)$ &
$\mathbb{Z}/2\mathbb{Z} \oplus \mathbb{Z}$ &
$(-5, 0), (-\frac{15}{4}, \frac{25}{8})$ \\ \hline
    \end{tabular}
    
    \caption{Rational angles in lattices with squared angle $(1,\tau)=\zeta_{10}$}
    \end{table}

\begin{table}
    \begin{tabular}{|c|c|c|c|c|c|c|}
    \hline
    Angle & $u^2=f(\lambda,\mu)$ & $a_\pm, b_\pm$ & $P$ & $E$ & $E(\mathbb{Q})$ & Generators \\ \hline
$\zeta_{12} ^ 2$ & 
$\lambda(\lambda+\mu)(\lambda^2+\lambda\mu+\mu^2)$ &
$\begin{array}{c} \displaystyle \frac{\lambda^2 - \mu^2 \pm u}{\mu} \\
\displaystyle \frac{-\lambda^2 - 2\lambda\mu \mp u}{\lambda}\end{array}$ &
$[ 0 : 1 : 0 ]$ &
$y^2 = x(x^2-x+1)$ &
$\mathbb{Z}/4\mathbb{Z}$ &
$(1, 1)$ \\ \hline

$\zeta_{12} ^ 3$ &
$(2\lambda^2+2\lambda\mu+\mu^2)(2\lambda^2+2\lambda\mu+5\mu^2)$ &
$\begin{array}{c}\displaystyle \frac{-2\lambda^2 - 6\lambda\mu - \mu^2 \pm u}{4\lambda + 2\mu} \\
\displaystyle \frac{2\lambda^2 - 2\lambda\mu - 3\mu^2 \mp u}{2\mu} \end{array}$ &
$[ 1 : 0 : 2 ]$ &
$y^2 = (x-2)(x+1)(x+2)$ &
$\mathbb{Z}/2\mathbb{Z} \oplus \mathbb{Z}/2\mathbb{Z}$ &
$(2, 0), (-1, 0)$ \\ \hline

$\zeta_{12} ^ 4$ &
$(\lambda^2+\lambda\mu+\mu^2)(5\lambda^2+5\lambda\mu+2\mu^2)$ &
$\begin{array}{c}\displaystyle \frac{\lambda^2 - 2\lambda\mu - 2\mu^2 \pm u}{\lambda + 2\mu} \\
\displaystyle \frac{-\lambda^2 - 4\lambda\mu - \mu^2 \mp u}{2\lambda + \mu}\end{array}$ &
$[ 1 : 1 : 6 ]$ &
$y^2 = (x+3)(x^2-3x-9)$ &
$\mathbb{Z}/2\mathbb{Z} \oplus \mathbb{Z}$ &
$(-3, 0), (6, 9)$ \\ \hline

$\zeta_{12} ^ 6$ &
$(\lambda^2+\lambda\mu+\mu^2)(2\lambda^2+2\lambda\mu+\mu^2)$ &
$\begin{array}{c}\displaystyle \frac{-2\lambda\mu - \mu^2 \pm u}{\lambda + \mu}\\
\displaystyle \frac{-2\lambda\mu - \mu^2 \mp u}{\lambda + \mu} \end{array}$ &
$[ 0 : 1 : 1 ]$ &
$y^2 = (x+1)(x^2-2x-2)$ &
$\mathbb{Z}/2\mathbb{Z} \oplus \mathbb{Z}$ &
$(-1, 0), (-\frac{3}{4}, \frac{1}{8})$ \\ \hline
    \end{tabular}
    
    \caption{Rational angles in lattices with squared angle $(1,\tau)=\zeta_{12}$}
    \end{table}

\end{landscape}

\newpage

\section{Pictures of the four spaces of type 3+3}\label{sec:figure:3+3}

\begin{figure}[H]
\begin{center}
\scalebox{0.9}{
\begin{tikzpicture}[scale=4]
\coordinate (O) at (0,0);

\def\numerolati{8}
\node (pol) [draw, thick, blue!90!black,rotate=90,minimum size=9.5cm,regular polygon, regular polygon sides=\numerolati, rotate=180+180/\numerolati] at (0,0) {}; 

\foreach \n [count=\nu from 0, remember=\n as \lastn, evaluate={\nu+\lastn}] in {1,2,...,\numerolati} 
\node[anchor=(\n-1)*(360/\numerolati)+180]at(pol.corner \n){$\zeta^{\nu}$};

\node (t) [circle, fill=red, inner sep=0pt, minimum size=4pt, label=90:$\tau$]  at ($(pol.corner 3)-(pol.corner 2) + (pol.corner 1)$) {};

\coordinate (A) at ($(pol.corner 4)+sqrt(2)*(pol.corner 4) - sqrt(2)*(pol.corner 3)$);
\coordinate (B) at ($(pol.corner 3)+sqrt(2)*(pol.corner 3) - sqrt(2)*(pol.corner 2)$) {};
\coordinate (C)  at ($(pol.corner 2)+sqrt(2)*(pol.corner 2) - sqrt(2)*(pol.corner 1)$) {};
\coordinate (D) at ($(pol.corner 1)+sqrt(2)*(pol.corner 1) - sqrt(2)*(pol.corner 8)$) {};
\coordinate (E) at ($(pol.corner 8)+sqrt(2)*(pol.corner 8) - sqrt(2)*(pol.corner 7)$) {};
\coordinate (F) at ($(pol.corner 7)+sqrt(2)*(pol.corner 7) - sqrt(2)*(pol.corner 6)$) {};
\coordinate (G) at ($(pol.corner 6)+sqrt(2)*(pol.corner 6) - sqrt(2)*(pol.corner 5)$) {};
\coordinate (H) at ($(pol.corner 5)+sqrt(2)*(pol.corner 5) - sqrt(2)*(pol.corner 4)$) {};
\draw [thick, blue!90!black] (B)--(C);
\draw [thick, blue!90!black] (C)--(D);
\draw [thick, blue!90!black] (D)--(E);
\draw [thick, blue!90!black] (E)--(F);
\draw [thick, blue!90!black] (F)--(G);
\draw [thick, blue!90!black] (G)--(H);
\draw [thick, blue!90!black] (H)--(A);
\draw [thick, blue!90!black] (A)--(B);

\draw [thin] (C)--(E);
\draw [thin] (D)--(F);

\coordinate (tmeno1) at ($(t)-(pol.corner 1)$);
\coordinate (tpiu1) at ($(t)+(pol.corner 1)$);
\node [circle, fill=red, inner sep=0pt, minimum size=4pt, label=0:$\tau+1$]  at (tpiu1) {};
\node (tmeno2) [circle, fill=red, inner sep=0pt, minimum size=4pt, label=180:$\tau-2$]  at ($(t)-2*(pol.corner 1)$) {};
\node (tmeno1/2) [circle, fill=red, inner sep=0pt, minimum size=4pt, label=90:$\tau-1/2$]  at ($(t)-1/2*(pol.corner 1)$) {};

\draw [thin] (B)--(tmeno1/2);

\draw [thin] (O)--(pol.corner 2);
\draw [thin, name path=dO3] (pol.corner 3)--(pol.corner 5);
\draw [thin, name path=dO3] (pol.corner 4)--(pol.corner 6);
\draw [thin, name path=dO3] (pol.corner 1)--(pol.corner 4);
\draw [thin, name path=dO3] (pol.corner 3)--(pol.corner 8);   
\draw [thin, name path=dO3] (tpiu1)--(pol.corner 7);
\draw [thin, name path=dO3] (tmeno2)--(pol.corner 8);        
\draw [thin, name path=dO3] (tmeno2)--(pol.corner 4);   
             
 \node [circle, fill=red, inner sep=0pt, minimum size=4pt, label=90:$\tau-1$] at (tmeno1) {};             
             
\draw [->,thick,red, name path=ac] (O)--(tmeno1);
\draw [->,thick,red, name path=ac] (O)--(t); 
\draw [->,thick,red, name path=ac] (O)--(pol.corner 1);
\draw [->,thick,green, name path=ac] (O)--(tmeno2);
\draw [->,thick,green, name path=ac] (O)--(tmeno1/2);
\draw [->,thick,green, name path=ac] (O)--(tpiu1);

 \node [circle, fill=red, inner sep=0pt, minimum size=4pt, label=-90:$O$] at (O) {};

\end{tikzpicture}
}
\end{center}
\caption{$\tau=\zeta_8^2-\zeta_8+1$}\label{fig:ottagono:1}
\end{figure}

\newpage

\begin{figure}[H]
\begin{center}
\scalebox{0.9}{
\begin{tikzpicture}[scale=4]
\coordinate (O) at (0,0);
 \node [circle, fill=red, inner sep=0pt, minimum size=4pt, label=-90:$O$] at (O) {};
 
 \def\numerolati{10}
\node (pol) [draw, thick, blue!90!black,rotate=90,minimum size=10cm,regular polygon, regular polygon sides=\numerolati, rotate=198] at (0,0) {}; 

\foreach \n [count=\nu from 0, remember=\n as \lastn, evaluate={\nu+\lastn}] in {1,2,...,10} 
\node[anchor=(\n-1)*(360/\numerolati)+180]at(pol.corner \n){$\zeta^{\nu}$};

\node (t) [circle, fill=red, inner sep=0pt, minimum size=4pt, label=90:$\tau$]  at ($(pol.corner 5) + 2*(pol.corner 3) + 2*(pol.corner 1)$) {};

\node (tmeno1) [circle, fill=red, inner sep=0pt, minimum size=4pt, label=90:$\tau-1$]  at ($(t)-(pol.corner 1)$) {};

\node (tmeno1/3) [circle, fill=red, inner sep=0pt, minimum size=4pt, label=90:$\tau-1/3$]  at ($(t)-1/3*(pol.corner 1)$) {};
\node (t/3meno4/3) [circle, fill=red, inner sep=0pt, minimum size=4pt, label=90:$\frac{\tau}{3}-\frac{4}{3}$]  at ($1/3*(t)-4/3*(pol.corner 1)$) {};

\node (4/5tmeno1) [circle, fill=red, inner sep=0pt, minimum size=4pt, label=90:$\frac{4}{5}\tau-1$]  at ($4/5*(t)-1*(pol.corner 1)$) {};

\draw [->,thick,red, name path=ac] (O)--(tmeno1);
\draw [->,thick,red, name path=ac] (O)--(t); 
\draw [->,thick,red, name path=ac] (O)--(pol.corner 1);
\draw [->,thick,green, name path=ac] (O)--(tmeno1/3);
\draw [->,thick,green, name path=ac] (O)--(t/3meno4/3);
\draw [->,thick,green, name path=ac] (O)--(4/5tmeno1);

\draw [thin, name path=d18] (pol.corner 10)--(t);
\draw [thin, name path=d18] (pol.corner 5)--(t);
\draw [thin, name path=d18] (pol.corner 10)--(tmeno1);
\draw [thin, name path=d18] (pol.corner 6)--(tmeno1);

\draw [thin] (pol.corner 6)--(t/3meno4/3);

 \coordinate (B) at ($2/3*(pol.corner 6) + sqrt(5)/3*(pol.corner 6) + 1/3*(pol.corner 7) -sqrt(5)/3*(pol.corner 7)$);
  \coordinate (C) at ($2/3*(pol.corner 7) + sqrt(5)/3*(pol.corner 7) + 1/3*(pol.corner 8) -sqrt(5)/3*(pol.corner 8)$);
   \coordinate (D) at ($2/3*(pol.corner 8) + sqrt(5)/3*(pol.corner 8) + 1/3*(pol.corner 9) -sqrt(5)/3*(pol.corner 9)$);
   \coordinate (E) at ($2/3*(pol.corner 9) + sqrt(5)/3*(pol.corner 9) + 1/3*(pol.corner 10) -sqrt(5)/3*(pol.corner 10)$);
   \coordinate (F) at ($2/3*(pol.corner 10) + sqrt(5)/3*(pol.corner 10) + 1/3*(pol.corner 1) -sqrt(5)/3*(pol.corner 1)$);
   \coordinate (G) at ($2/3*(pol.corner 1) + sqrt(5)/3*(pol.corner 1) + 1/3*(pol.corner 2) -sqrt(5)/3*(pol.corner 2)$);
   \coordinate (H) at ($2/3*(pol.corner 2) + sqrt(5)/3*(pol.corner 2) + 1/3*(pol.corner 3) -sqrt(5)/3*(pol.corner 3)$);
   \coordinate (I) at ($2/3*(pol.corner 3) + sqrt(5)/3*(pol.corner 3) + 1/3*(pol.corner 4) -sqrt(5)/3*(pol.corner 4)$);
   \coordinate (J) at ($2/3*(pol.corner 4) + sqrt(5)/3*(pol.corner 4) + 1/3*(pol.corner 5) -sqrt(5)/3*(pol.corner 5)$);

 \draw [thick, blue!90!black] (t/3meno4/3)--(B);
\draw [thick, blue!90!black] (B)--(C);
\draw [thick, blue!90!black] (C)--(D);
\draw [thick, blue!90!black] (D)--(E);
\draw [thick, blue!90!black] (E)--(F);
\draw [thick, blue!90!black] (F)--(G);
\draw [thick, blue!90!black] (G)--(H);
\draw [thick, blue!90!black] (H)--(I);
\draw [thick, blue!90!black] (I)--(J);
 \draw [thick, blue!90!black] (J)--(t/3meno4/3);

  \draw [thin, name path=d18] (F)--(J);
   \draw [thin, name path=d18] (C)--(I);

\end{tikzpicture}
}

\end{center}
\caption{$\tau=\zeta_{10}^4+2\zeta_{10}+2$}\label{fig:pentagono:1}
\end{figure}

\newpage

\begin{figure}[H]
\begin{center}
\scalebox{0.9}{
\begin{tikzpicture}[scale=2]
\coordinate (O) at (0,0);
 \node [circle, fill=red, inner sep=0pt, minimum size=4pt, label=-90:$O$] at (O) {};
 
 \def\numerolati{10}
\node (pol) [draw, thick, blue!90!black,rotate=90,minimum size=6cm,regular polygon, regular polygon sides=\numerolati, rotate=198] at (0,0) {}; 

\foreach \n [count=\nu from 0, remember=\n as \lastn, evaluate={\nu+\lastn}] in {1,2,...,10} 
\node[anchor=(\n-1)*(360/\numerolati)+180]at(pol.corner \n){$\zeta^{\nu}$};

\node (t) [circle, fill=red, inner sep=0pt, minimum size=4pt, label=90:$\tau$]  at ($(pol.corner 3) - 2*(pol.corner 2) + 2*(pol.corner 1)$) {};

\node (tmeno1) [circle, fill=red, inner sep=0pt, minimum size=4pt, label=90:$\tau-1$]  at ($(t)-(pol.corner 1)$) {};

\node (tmeno4) [circle, fill=red, inner sep=0pt, minimum size=4pt, label=90:$\tau-4$]  at ($(t)-4*(pol.corner 1)$) {};

\node (3tmeno1) [circle, fill=red, inner sep=0pt, minimum size=4pt, label=90:$3\tau-1$]  at ($3*(t)-(pol.corner 1)$) {};
\node (4tmeno5) [circle, fill=red, inner sep=0pt, minimum size=4pt, label=90:$4\tau-5$]  at ($4*(t)-5*(pol.corner 1)$) {};

\coordinate (B) at ($8*(pol.corner 3) - 4*sqrt(5)*(pol.corner 3) + 4*sqrt(5)*(pol.corner 6) -7*(pol.corner 6)$);
\coordinate (B) at ($8*(pol.corner 4) - 4*sqrt(5)*(pol.corner 4) + 4*sqrt(5)*(pol.corner 7) -7*(pol.corner 7)$);
\coordinate (C) at ($8*(pol.corner 5) - 4*sqrt(5)*(pol.corner 5) + 4*sqrt(5)*(pol.corner 8) -7*(pol.corner 8)$);
\coordinate (D) at ($8*(pol.corner 6) - 4*sqrt(5)*(pol.corner 6) + 4*sqrt(5)*(pol.corner 9) -7*(pol.corner 9)$);
\coordinate (E) at ($8*(pol.corner 7) - 4*sqrt(5)*(pol.corner 7) + 4*sqrt(5)*(pol.corner 10) -7*(pol.corner 10)$);
\coordinate (F) at ($8*(pol.corner 8) - 4*sqrt(5)*(pol.corner 8) + 4*sqrt(5)*(pol.corner 1) -7*(pol.corner 1)$);
\coordinate (G) at ($8*(pol.corner 9) - 4*sqrt(5)*(pol.corner 9) + 4*sqrt(5)*(pol.corner 2) -7*(pol.corner 2)$);
\coordinate (H) at ($8*(pol.corner 10) - 4*sqrt(5)*(pol.corner 10) + 4*sqrt(5)*(pol.corner 3) -7*(pol.corner 3)$);
\coordinate (I) at ($8*(pol.corner 1) - 4*sqrt(5)*(pol.corner 1) + 4*sqrt(5)*(pol.corner 4) -7*(pol.corner 4)$);
\coordinate (J) at ($8*(pol.corner 2) - 4*sqrt(5)*(pol.corner 2) + 4*sqrt(5)*(pol.corner 5) -7*(pol.corner 5)$);
\coordinate (K) at ($8*(pol.corner 3) - 4*sqrt(5)*(pol.corner 3) + 4*sqrt(5)*(pol.corner 6) -7*(pol.corner 6)$);

\draw [thick, blue!90!black] (B)--(C);
\draw [thick, blue!90!black] (C)--(D);
\draw [thick, blue!90!black] (D)--(E);
\draw [thick, blue!90!black] (E)--(F);
\draw [thick, blue!90!black] (F)--(G);
\draw [thick, blue!90!black] (G)--(H);
\draw [thick, blue!90!black] (H)--(I);
\draw [thick, blue!90!black] (I)--(J);
\draw [thick, blue!90!black] (J)--(K);
\draw [thick, blue!90!black] (K)--(B);

\draw [thin, name path=d18] (pol.corner 1)--(pol.corner 8);
\draw [thin, name path=d48] (pol.corner 4)--(pol.corner 8);
\draw [thin, name path=d610] (pol.corner 6)--(3tmeno1);
\draw [thin, name path=d07] (O)--(pol.corner 7);
\draw [thin, name path=d310] (pol.corner 3)--(pol.corner 10);

\draw [thin, name path=d610] (pol.corner 3)--(tmeno4);

\draw [thin, name path=d610] (tmeno4)--(t);

\draw [thin, name path=d610] (pol.corner 3)--(K);
\draw [thin, name path=d610] (E)--(K);
\draw [thin, name path=d610] (D)--(H);
\draw [thin, name path=d610] (tmeno4)--(C);
\draw [thin, name path=d610] (tmeno4)--(H);

\draw [->,thick,red, name path=ac] (O)--(tmeno1);
\draw [->,thick,red, name path=ac] (O)--(t); 
\draw [->,thick,red, name path=ac] (O)--(pol.corner 1);
\draw [->,thick,green, name path=ac] (O)--(3tmeno1);
\draw [->,thick,green, name path=ac] (O)--(tmeno4);
\draw [->,thick,green, name path=ac] (O)--(4tmeno5);

\end{tikzpicture}
}
\end{center}
\caption{$\tau=\zeta_{10}^2-2\zeta_{10}+2$}\label{fig:pentagono:2}
\end{figure}

\newpage

\begin{figure}[H]
\begin{center}
\scalebox{0.9}{
\begin{tikzpicture}[scale=2]
\coordinate (O) at (0,0);
 \node [circle, fill=red, inner sep=0pt, minimum size=4pt, label=-90:$O$] at (O) {};
 
 \def\numerolati{12}
\node (pol) [draw, thick, blue!90!black,rotate=90,minimum size=12cm,regular polygon, regular polygon sides=\numerolati, rotate=180+180/\numerolati] at (0,0) {}; 

\foreach \n [count=\nu from 0, remember=\n as \lastn, evaluate={\nu+\lastn}] in {1,2,...,\numerolati} 
\node[anchor=(\n-1)*(360/\numerolati)+180]at(pol.corner \n){$\zeta^{\nu}$};

\node (t) [circle, fill=red, inner sep=0pt, minimum size=4pt, label=90:$\tau$]  at ($-2*(pol.corner 4) + 2*(pol.corner 3) + (pol.corner 2) - (pol.corner 1)$) {};

\node (tmeno1) [circle, fill=red, inner sep=0pt, minimum size=4pt, label=90:$\tau-1$]  at ($(t)-(pol.corner 1)$) {};

\node (t/3piu1) [circle, fill=red, inner sep=0pt, minimum size=4pt, label=90:$\frac{\tau}{3}+1$]  at ($1/3*(t)+1*(pol.corner 1)$) {};

\node (tmeno2) [circle, fill=red, inner sep=0pt, minimum size=4pt, label=90:$\tau-2$]  at ($(t)-2*(pol.corner 1)$) {};
\node (tmeno3/4) [circle, fill=red, inner sep=0pt, minimum size=4pt, label=90:$\tau-3/4$]  at ($(t)-3/4*(pol.corner 1)$) {};

\draw [thin, name path=d15] (t)--(tmeno2);

 \draw [thin] (pol.corner 2)--(pol.corner 12);
  \draw [thin] (pol.corner 5)--(O);
    \draw [thin] (pol.corner 3)--(pol.corner 8); 
        \draw [thin] (pol.corner 7)--(pol.corner 2); 
           \draw [thin] (pol.corner 9)--(tmeno2);   
             \draw [thin] (pol.corner 5)--(tmeno2);          

\draw [thin] (pol.corner 8)--(t/3piu1);       
 \draw [thin] (pol.corner 3)--(t/3piu1);

\draw [->,thick,red, name path=ac] (O)--(tmeno1);
\draw [->,thick,red, name path=ac] (O)--(t); 
\draw [->,thick,red, name path=ac] (O)--(pol.corner 1);
\draw [->,thick,green, name path=ac] (O)--(t/3piu1);
\draw [->,thick,green, name path=ac] (O)--(tmeno2);
\draw [->,thick,green, name path=ac] (O)--(tmeno3/4);

\coordinate (B) at ($(pol.corner 3) + 2*sqrt(3)/3*(pol.corner 3) -2*sqrt(3)/3*(pol.corner 4)$);
\coordinate (C) at ($(pol.corner 4) + 2*sqrt(3)/3*(pol.corner 4) -2*sqrt(3)/3*(pol.corner 5)$);
\coordinate (D) at ($(pol.corner 5) + 2*sqrt(3)/3*(pol.corner 5) -2*sqrt(3)/3*(pol.corner 6)$);
\coordinate (E) at ($(pol.corner 6) + 2*sqrt(3)/3*(pol.corner 6) -2*sqrt(3)/3*(pol.corner 7)$);
\coordinate (F) at ($(pol.corner 7) + 2*sqrt(3)/3*(pol.corner 7) -2*sqrt(3)/3*(pol.corner 8)$);
\coordinate (G) at ($(pol.corner 8) + 2*sqrt(3)/3*(pol.corner 8) -2*sqrt(3)/3*(pol.corner 9)$);
\coordinate (H) at ($(pol.corner 9) + 2*sqrt(3)/3*(pol.corner 9) -2*sqrt(3)/3*(pol.corner 10)$);
\coordinate (I) at ($(pol.corner 10) + 2*sqrt(3)/3*(pol.corner 10) -2*sqrt(3)/3*(pol.corner 11)$);
\coordinate (J) at ($(pol.corner 11) + 2*sqrt(3)/3*(pol.corner 11) -2*sqrt(3)/3*(pol.corner 12)$);
\coordinate (K) at ($(pol.corner 12) + 2*sqrt(3)/3*(pol.corner 12) -2*sqrt(3)/3*(pol.corner 1)$);
\coordinate (L) at ($(pol.corner 1) + 2*sqrt(3)/3*(pol.corner 1) -2*sqrt(3)/3*(pol.corner 2)$);

\draw [thick, blue!90!black] (t/3piu1)--(B);
\draw [thick, blue!90!black] (B)--(C);
\draw [thick, blue!90!black] (C)--(D);
\draw [thick, blue!90!black] (D)--(E);
\draw [thick, blue!90!black] (E)--(F);
\draw [thick, blue!90!black] (F)--(G);
\draw [thick, blue!90!black] (G)--(H);
\draw [thick, blue!90!black] (H)--(I);
\draw [thick, blue!90!black] (I)--(J);
\draw [thick, blue!90!black] (J)--(K);
\draw [thick, blue!90!black] (K)--(L);
\draw [thick, blue!90!black] (L)--(t/3piu1);
\draw [thin] (O)--(C);
\draw [thin] (E)--(G);
\draw [thin] (F)--(H);

\end{tikzpicture}
}
\end{center}
\caption{$\tau=-2\zeta_{12}^3+2\zeta_{12}^2+\zeta_{12}-1$}\label{fig:dodecagono:1}
\end{figure}
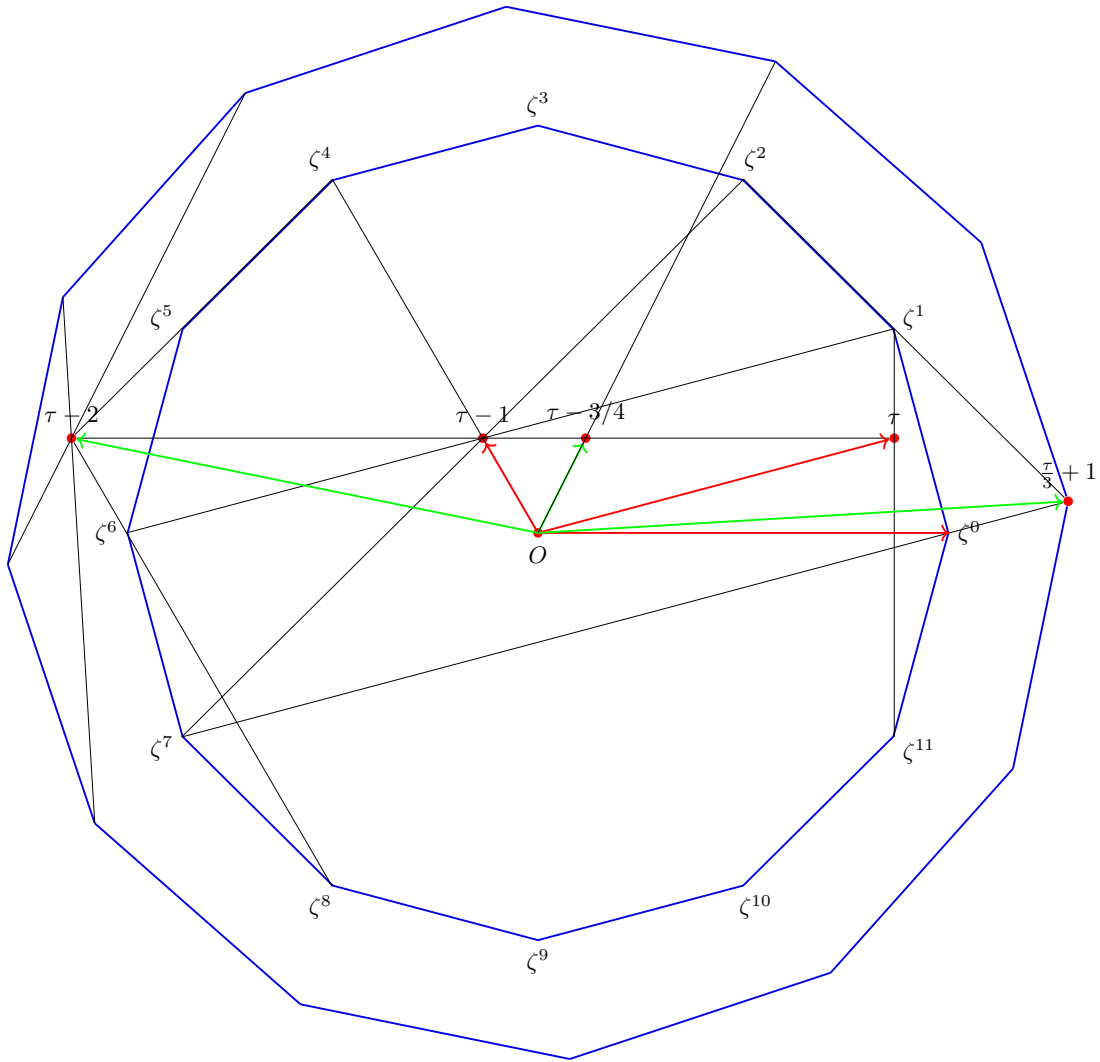

\newpage

\appendix
\section{A euclidean proof}

A euclidean explanation for the family $(\zeta_6, \zeta_6^2, \zeta_6^2, 2d, \frac{1}{2}c, c, d)$ from Theorem~\ref{thm:abcdGeometric} (6), at least in the case $cd < 0$, is provided by the following geometric construction, in which the points $D', D, T, M_C, C, O$ correspond respectively to $\tau + 2d, \tau +d, \tau, \tau + \frac{1}{2}c, \tau+c, 0$.
\begin{proposition}
Consider 5 points on a line $r$, labelled $D', D, T, M_C, C$ in this order, and let $O$ be a point that does not lie on $r$. Assume that:
\begin{enumerate}
\item the angle $\angle OTD$ is $30^\circ$;
\item the angle $\angle{COD}$ is $60^\circ$;
\item $M_C$ is the midpoint of $TC$;
\item the angle $\angle{M_C O D'}$ is $120^\circ$. %
\end{enumerate}
Then $D$ is the midpoint of the segment $D'T$.%
\end{proposition}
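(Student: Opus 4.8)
The plan is to reduce the whole statement to a one-line trigonometric identity, using the elementary fact that, seen from a fixed point $O$ off the line $r$, the position of a point $X\in r$ is an affine function of $\cot$ of the angle under which $O$ sees $X$. First I would fix coordinates so that $r$ is the horizontal line $\{y=p\}$ with $p>0$ and $O=(f,0)$ (possible since $O\notin r$), choosing the labelling so that the $x$-coordinates of $D',D,T,M_C,C$ are increasing. For $X=(t_X,p)\in r$ set $\theta_X:=\arg(\vec{OX})\in(0^\circ,180^\circ)$; then $t_X=f+p\cot\theta_X$. The key consequence is that, for collinear points on $r$, "$Y$ is the midpoint of $XZ$" is equivalent to $\cot\theta_Y=\tfrac12(\cot\theta_X+\cot\theta_Z)$.

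Next I would translate the four hypotheses into relations among the $\theta$'s. A short angle-chase, in which the prescribed order of the five points fixes all the relevant orientations, gives: hypothesis (1) amounts to $\angle OTD=\theta_T$, hence $\theta_T=30^\circ$ and $\cot\theta_T=\sqrt3$; hypothesis (2) becomes $\theta_D-\theta_C=60^\circ$; hypothesis (4) becomes $\theta_{D'}-\theta_{M_C}=120^\circ$; and hypothesis (3), via the midpoint reformulation above, reads $\cot\theta_{M_C}=\tfrac12(\sqrt3+\cot\theta_C)$. The desired conclusion "$D$ is the midpoint of $D'T$" is, by the same reformulation, exactly $\cot\theta_D=\tfrac12(\sqrt3+\cot\theta_{D'})$. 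Substituting $\theta_D=\theta_C+60^\circ$ and $\theta_{D'}=\theta_{M_C}+120^\circ$, the whole proposition reduces to the assertion
\[
\cot(\theta_C+60^\circ)=\frac{\sqrt3+\cot(\theta_{M_C}+120^\circ)}{2},\qquad\text{where }\cot\theta_{M_C}=\frac{\sqrt3+\cot\theta_C}{2}.
\]

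Finally I would verify this identity by a direct computation with the cotangent addition formula: writing $c=\cot\theta_C$, one finds $\cot(\theta_C+60^\circ)=\dfrac{c-\sqrt3}{\sqrt3\,c+1}$; on the other side, using $\cot\theta_{M_C}=\frac{\sqrt3+c}{2}$ and $\cot120^\circ=-\frac1{\sqrt3}$, one gets $\cot(\theta_{M_C}+120^\circ)=\dfrac{-3\sqrt3-c}{1+\sqrt3\,c}$, and hence $\tfrac12\bigl(\sqrt3+\cot(\theta_{M_C}+120^\circ)\bigr)=\dfrac{c-\sqrt3}{\sqrt3\,c+1}$ as well. The two expressions coincide, which proves the statement.

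The only genuinely delicate part is the bookkeeping in the middle step: one must check that the stated cyclic order $D',D,T,M_C,C$ together with the given angle values forces every $\theta_X$ into $(0^\circ,180^\circ)$ (so that $t_X=f+p\cot\theta_X$ is legitimate) and forces the sign choices above — for instance that $\angle M_C O D'$ equals $\theta_{D'}-\theta_{M_C}$ rather than its negative or its reflex complement; here the observation that $\theta_T=30^\circ$ and $\theta_T>\theta_{M_C}>\theta_C>0^\circ$ pins $\theta_{M_C}\in(0^\circ,30^\circ)$, which makes all the ranges consistent. Once the hypotheses are correctly linearised in this way, the entire content of the proposition is the short identity displayed above.
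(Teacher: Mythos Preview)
Your proof is correct and takes a genuinely different route from the paper's. The paper gives a purely synthetic argument: it introduces a spiral similarity centred at $O$ carrying $T$ to the foot $A$ of the perpendicular from $O$ to $r$, tracks the images $E,F$ of $C,M_C$, and then works through a chain of cyclic quadrilaterals ($FAOM_C$, $COAE$, \ldots) and parallelograms ($AM_CEY$, $ZEAM_C$) to deduce the midpoint relation. Your approach instead linearises the problem immediately: the observation that positions on $r$ correspond affinely to cotangents of the visual angles from $O$ converts all four hypotheses into relations among the $\cot\theta_X$, and the conclusion collapses to a one-variable identity that you verify with the cotangent addition formula.

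What each approach buys: the paper's argument reveals the hidden symmetry (the spiral similarity) behind the configuration, but at the cost of several auxiliary points and delicate angle-chases. Your argument is much shorter and entirely mechanical once the $\cot$ parametrisation is in place, at the price of giving no structural explanation. The one place to tighten your write-up is the orientation bookkeeping you flag at the end: state explicitly that, with $p>0$ and the $x$-coordinates increasing in the order $D',D,T,M_C,C$, the map $X\mapsto\theta_X$ is strictly decreasing, so that $\theta_{D'}>\theta_D>\theta_T>\theta_{M_C}>\theta_C$; then the identifications $\angle OTD=\theta_T$, $\angle COD=\theta_D-\theta_C$, and $\angle M_COD'=\theta_{D'}-\theta_{M_C}$ follow unambiguously, and the ranges $\theta_{M_C}\in(0^\circ,30^\circ)$, $\theta_D\in(60^\circ,90^\circ)$, $\theta_{D'}\in(120^\circ,150^\circ)$ keep everything inside $(0^\circ,180^\circ)$.
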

\begin{proof}\phantom{}

\begin{center}
\includegraphics[scale=1.2]{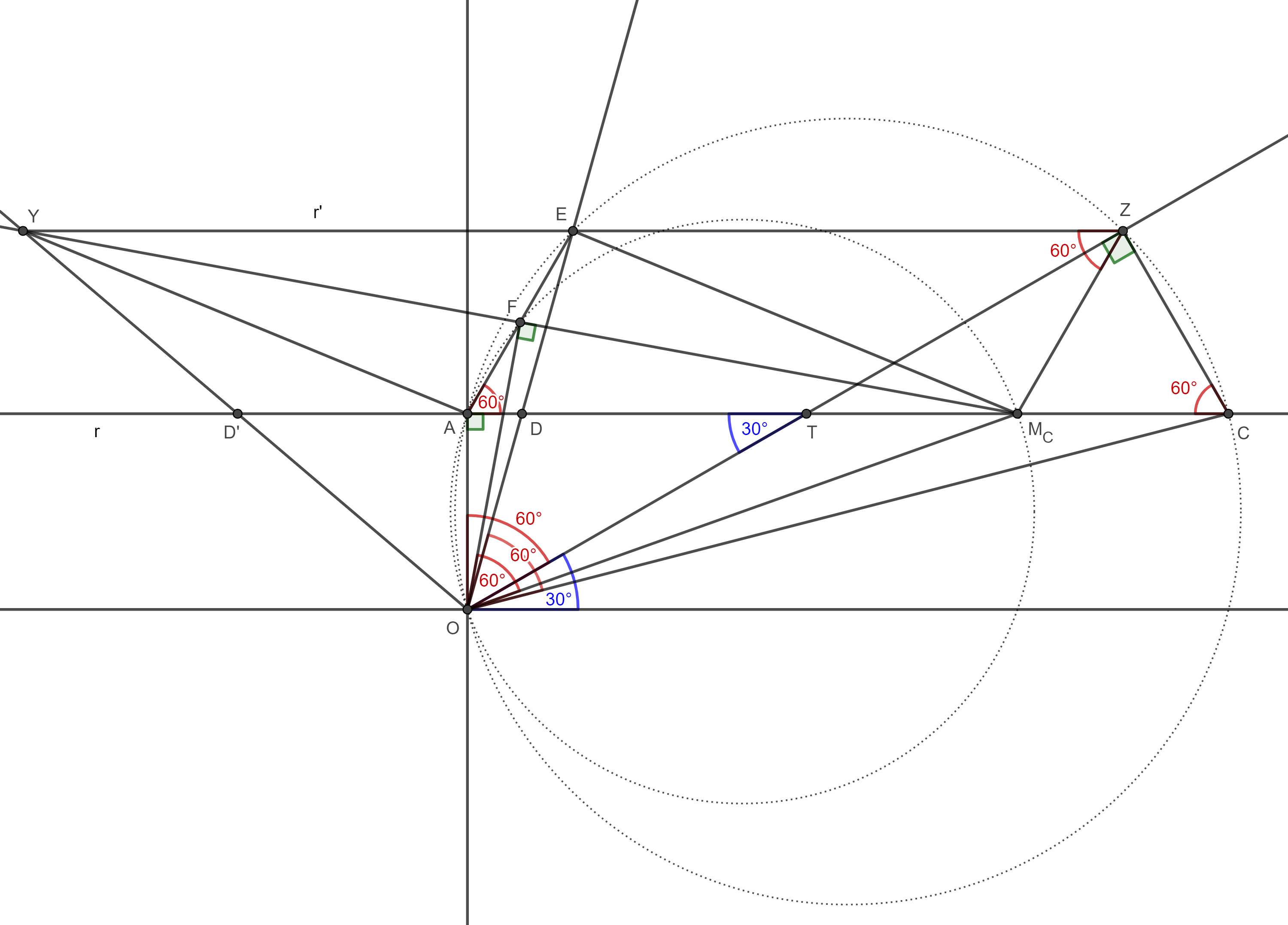}
\end{center}

Letting $A$ be the projection of $O$ on $r$, define a point $E$ by requiring that the triangles $EAO$ and $CTO$ are similar. In other words, $EAO$ is the image of $CTO$ under the unique spiral similarity of centre $O$ that carries $T$ to $A$. Under this spiral similarity, $M_C$ is sent to the midpoint $F$ of $AE$. The similarity implies $\angle{OAE} = \angle{OTC} = \angle{OAT} + \angle{TOA} = \angle{OAT} + (90^\circ-30^\circ)$, and on the other hand $\angle{OAE} = \angle{OAT} + \angle{TAE}$, whence $\angle{TAE} = 60^\circ$. We also have $\angle{M_COF} = (\angle{COD}-\angle{COM_C}) + \angle{DOF}$, and the spiral similarity gives $\angle{DOF} = \angle{COM_C}$, so $\angle{M_COF} = \angle{COD} = 60^\circ$.
Thus, in the quadrilateral $FAOM_C$ we have $\angle{M_COF} = 60^\circ = \angle{M_CAF}$, so the points $F, A, O, M_C$ all lie on the same circle. In particular, $\angle{OFM_C} = \angle{OAM_C}$ is a right angle.
We further have $\angle{COE} = \angle{COD} = 60^\circ = \angle{CAE}$, so the quadrilateral $COAE$ is also cyclic.

Let $Y$ be the intersection of the ray $OD'$ with the ray $M_CF$. The angle $\angle{M_COY} = \angle{M_COD'}$ is equal to $120^\circ$ by assumption. Since $FAOM_C$ is cyclic, the angle $\angle{YM_CO} = \angle{FM_CO}$ is supplementary to $\angle{FAO}=\angle{FAM_C} + \angle{M_CAO} = 150^\circ$, so $\angle{YM_CO}=30^\circ$. Thus, looking at the triangle $M_COY$ we obtain $\angle{OYM_C} = 180^\circ - \angle{M_COY}-\angle{YM_CO}=30^\circ$, hence the triangle $M_COY$ is isosceles. We already know that $OF$ is the altitude relative to its base, hence we obtain that $OF$ is also its median: $M_CF=FY$. As already noticed, we also have $EF=FA$. Thus, the diagonals of the quadrilateral $AM_CEY$ bisect each other, and therefore $AM_CEY$ is a parallelogram. In particular, the line $r'$ through $E$ and $Y$ is parallel to $r$. Let $Z$ be the point of intersection between $r'$ and the ray $OT$. We now show that $ZEAM_C$ is a parallelogram.

Note first that since $r, r'$ are parallel we have $\angle{TZE}=\angle{OTD}=30^\circ=180^\circ-\angle{OAE}$, so $Z$ lies on the same circle as $E, A, O$ (which we know to also pass through $C$). Hence, $\angle{CZO}=\angle{CAO}=90^\circ$. In particular, the triangle $TZC$ has $\angle{TZC} = 90^\circ$ and $\angle CTZ = \angle{OTA}=30^\circ$, so $\angle{ZCT}=60^\circ$. In this triangle, the segment $ZM_C$ is the median relative to the hypotenuse, so $ZM_C=\frac{1}{2}TC=M_CC$ and $CZ = \frac{1}{2}CT=M_CC$: the triangle $ZCM_C$ is therefore equilateral, and $M_CZC=60^\circ$. Looking at the circle through $Z, E, A, O, C$ we obtain $\angle{EZC} = 180^\circ-\angle{COE} = 180^\circ-\angle{COD}=120^\circ$, and by difference with $\angle{M_CZC}=60^\circ$ we get $\angle{M_CZE}=60^\circ$. Thus, $M_CZ$ is parallel to $EA$, and since $ZE$ is parallel to $AM_C$ we obtain as desired that $ZEAM_C$ is a parallelogram. 

Finally, the triangles $TOD'$ and $ZOY$ are similar (their sides being parallel in pairs), and the homotethy of centre $O$ that takes $T$ to $Z$ and $D'$ to $Y$ also takes $D$ to $E$. Hence, $D$ is the midpoint of $D'T$ if and only if $E$ is the midpoint of $YZ$. We have already shown that $EYAM_C$ and $ZEAM_C$ are parallelograms, so we have $YE=AM_C=EZ$, and the claim follows.
\end{proof}

\bibliography{biblio}
\end{document}